\newtheorem{theorem}{Theorem}[section]
\newtheorem{corollary}[theorem]{Corollary}
\newtheorem{lemma}[theorem]{Lemma}
\newtheorem{proposition}[theorem]{Proposition}
\newtheorem{definition}[theorem]{Definition}
\theoremstyle{definition}
\newtheorem{remark}[theorem]{Remark}
\numberwithin{equation}{section}
\newtheorem{conjecture}{Conjecture}
\newcommand{\beqm}{\begin{eqnarray*}}
\newcommand{\eqm}{\end{eqnarray*}}
\newcommand{\im}{\mathrm{i}}
\newcommand{\Cn}{\mathbb{C}^n}
\newcommand{\C}{\mathbb{C}}
\newcommand{\B}{\mathbb{B}}
\newcommand{\N}{\mathbb{N}}
\newcommand{\fr}{\frac}
\newcommand{\R}{\mathbb{R}}
\newcommand{\Hess}{\mathrm{Hess}_{\mathbb R}}
\DeclareMathOperator{\esssup}{ess\, sup}
\DeclareMathOperator{\IDA}{IDA}
\DeclareMathOperator{\VDA}{VDA}
\DeclareMathOperator{\BDA}{BDA}
\DeclareMathOperator{\BMO}{BMO}
\DeclareMathOperator{\VMO}{VMO}
\DeclareMathOperator{\supp}{supp}
\DeclareMathOperator{\loc}{loc}
\begin{document}
\title{IDA and Hankel operators on Fock spaces}

\author{Zhangjian Hu}
\address{Huzhou University, China}
\email{huzj@zjhu.edu.cn}

\author{Jani A. Virtanen}
\address{University of Reading, England and University of Helsinki, Finland}
\email{j.a.virtanen@reading.ac.uk, jani.virtanen@helsinki.fi}

\begin{abstract}
We introduce a new space IDA of locally integrable functions whose integral distance to holomorphic functions is finite, and use it to completely characterize boundedness and compactness of Hankel operators on weighted Fock spaces. As an application, for bounded symbols, we show that the Hankel operator $H_f$ is compact if and only if $H_{\bar f}$ is compact, which complements the classical compactness result of Berger and Coburn. Motivated by recent work of Bauer, Coburn, and Hagger, we also apply our results to the Berezin-Toeplitz quantization.
\medskip

\noindent\textbf{MSC(2020):} Primary 47B35; Secondary 32A25, 32A37, 81S10\\
\textbf{Keywords:} Fock space, Hankel operator, boundedness, compactness, quantization, $\overline{\partial}$-equation
\end{abstract}

\maketitle

\section{Introduction}
Denote by $L^2$ the Hilbert space of all Gaussian square-integrable functions $f$ on $\C^n$, that is,
$$
	\int_{\C^n} |f(z)|^2 e^{-|z|^2} dv(z) < \infty
$$
where $v$ is the standard Lebesgue measure on $\C^n$. The Fock space $F^2$ (aka Segal-Bargmann space) consists of all holomorphic functions in $L^2$. The orthogonal projection of $L^2$ onto $F^2$ is denoted by $P$ and called the Bergman projection. For a suitable function $f: \C^n\to \C$, the Hankel operator $H_f$ and the Toeplitz operator $T_f$ are defined on $F^2$ by
$$
	H_f = (I-P)M_f \quad{\rm and}\quad T_f  = PM_f.
$$
The function $f$ is referred to as the symbol of $H_f$ and $T_f$. Since $P$ is a bounded operator, it follows that both $H_f$ and $T_f$ are well defined and bounded on $F^2$ if $f$ is a bounded function. For unbounded symbols, despite considerable efforts, see, e.g.~\cite{Ba05, BC94, CHSW, HW18}, characterization of boundedness or compactness of these operators has remained an open problem for more than 20 years.

In this paper, as a natural evolution from $\BMO$ (see~\cite{JN61, Zh12}), we introduce a notion of integral distance to holomorphic (aka analytic) functions $\IDA$ and use it to completely characterize boundedness and compactness of Hankel operators on Fock spaces. Recently, in~\cite{HV22}, which continues our present work, we use $\IDA$ in the Hilbert space setting to characterize the Schatten class properties of Hankel operators. Indeed, the space $\IDA$ is broad in scope, and should have more applications, which we hope to demonstrate in future work in connection with Toeplitz operators.

All our results are proved for weighted Fock spaces $F^p(\varphi)$ consisting of holomorphic functions for which
$$
	\int_{\C^n} |f(z)|^p e^{-p\varphi(z)} dv(z) < \infty,
$$
where $0<p<\infty$ and $\varphi$ is a suitable weight function (see Section~\ref{prelimi} for further details). Obviously, with $p=2$ and $\varphi(z) = \frac\alpha2|z|^2$, we obtain the weighted Fock space $F^2_\alpha$. The study of $L^p$-type Fock spaces was initiated in~\cite{JPR} and has since grown considerably as seen in~\cite{Zh12}.

\medskip

We also revisit and complement a surprising result due to Berger and Coburn~\cite{BC87} which states that for bounded symbols
\begin{equation*}\label{e:phenomenon}
	H_f : F^2\to L^2\ \textrm{is compact}\ \text{if and only if}\ H_{\bar f}\ \textrm{is compact}.
\end{equation*}
In particular, we give a new proof and show that this phenomenon remains true for Hankel operators from $F^p(\varphi)$ to $L^q(\varphi)$ for general weights. What also makes this result striking is that it is not true for Hankel operators acting on other important function spaces, such as Hardy or Bergman spaces.

\medskip

As an application, we will apply our results to the Berezin-Toeplitz quantization, which complements the results in~\cite{BCH18}.

\subsection{Main results}
We introduce the following new function spaces to characterize bounded and compact Hankel operators. Let $0< s\le \infty$ and $0<q< \infty$. For $f\in L^q_{\mathrm {loc}}$, set
 $$
	\big( G_{q,r}(f)(z)\big)^q = \inf_{h\in H(B(z,r))} \frac{1}{|B(z,r)|}\int_{B(z,r)}
	|f-h|^q dv \quad(z\in \C^n)
$$
where $H(B(z,r))$ stands for the set of holomorphic functions in the ball $B(z,r$. We say that $f\in L^q_{\mathrm {loc}}$ is in $\IDA^{s,q}$ if
$$
	\|f\|_{\mathrm{IDA}^{s, q}} =  \|  G_{q, 1}(f)\|_{L^s}<\infty.
$$
We further write $\mathrm{BDA}^q$ for $\IDA^{\infty, q}$ and say that $f\in\mathrm{VDA}^q$ if
$$
	\lim_{z\to \infty} G_{q, 1}(f)(z)=0.
$$
The properties of these spaces will be studied in Section~\ref{space-IDA}.

\medskip

We denote by $\mathcal S$ the set of all measurable functions $f$ that satisfy the condition in~\eqref{e:symbol class}, which ensures that the Hankel operator $H_f$ is densely defined on $F^p(\varphi)$ provided that $0<p<\infty$ and $\varphi$ is a suitable weight. Notice that the symbol class $\mathcal S$ contains all bounded functions. Further, we write $\Hess \varphi$ for the Hessian of $\varphi$ and $\mathrm{E}$ for the $2n\times 2n$ identity matrix---these concepts will be discussed in more detail in Section~\ref{prelimi}. It is important to notice that the condition $\mathrm{Hess}_{\mathbb R}\varphi \simeq \mathrm{E}$ in the following theorems is satisfied by the classical Fock space $F^2$, the Fock spaces $F^2_\alpha$ generated by {\it standard weights} $\varphi(z) = \frac\alpha2|z|^2$ ($\alpha>0$), Fock-Sobolev spaces, and a large class of non-radial weights.

\begin{theorem}\label{main1}
Let $f\in \mathcal S$ and suppose that $\mathrm{Hess}_{\mathbb R}\varphi \simeq \mathrm{E}$ as in~\eqref{weights}.

{\rm (a)} For $0< p\leq q<\infty$ and $q\ge 1$, $H_f : F^p(\varphi)\to L^q(\varphi)$ is bounded if and only if $f\in \BDA^q$, and $H_f$ is compact if and only if $f\in VDA^q$. For the operator norm of $H_f$, we have the estimate
\begin{eqnarray}\label{bounded-g}
        \|H_f\| \simeq \left\|   f \right\|_{\mathrm{BDA}^q}.
\end{eqnarray}

{\rm (b)} For $1\le q<p <\infty$, $H_f: F^p(\varphi)\to L^q(\varphi)$ is bounded if and only if it is compact, which is equivalent to $f\in \mathrm{IDA}^{s, q}$,
where $s={\fr {pq}{p-q}}$, and
\begin{equation}\label{q-lessthen-p-a}
	\|H_f\| \simeq \| f  \|_{\IDA^{s, q}}.
\end{equation}

{\rm (c)} For $0<p\le q\le 1$ and $f\in L^\infty$, $H_f : F^p(\varphi)\to L^q(\varphi)$ is bounded with
\begin{equation}\label{q-is-small}
	\|H_f\|\le C \|f\|_{L^\infty}
\end{equation}
and compact if and only if $f\in \VDA^q$.
\end{theorem}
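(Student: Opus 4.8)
The plan is to establish the three parts through a common two-directional scheme: the necessity direction (an operator bound forces a membership in some $\IDA$-type space) and the sufficiency direction (membership yields the operator bound), with the sufficiency side powered by a weighted $\overline\partial$-solution with good $L^q$-estimates. For the necessity direction in (a), I would test the operator against the normalized reproducing kernels $k_z$ of $F^p(\varphi)$: since $P k_z$ reproduces the holomorphic part, $\|H_f k_z\|_{L^q(\varphi)}$ controls a local $L^q$-distance of $f$ to holomorphic functions on a ball $B(z,r)$, after using the pointwise estimates on $k_z$ (concentration near $z$, decay away from it) that follow from $\Hess_{\mathbb R}\varphi\simeq \mathrm E$. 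Taking the supremum over $z$ gives $\|f\|_{\BDA^q}\lesssim\|H_f\|$; letting $z\to\infty$ and using that $k_z\to 0$ weakly gives the $\VDA^q$ statement. The same testing functions, arranged into an $\ell^s$-type sum over a lattice, handle the necessity direction in (b) via a duality/Khinchine argument against the Schatten-type exponent $s=pq/(p-q)$.

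For sufficiency, the key device is: given $f\in L^q_{\rm loc}$, on each ball $B(z,1)$ pick a near-optimal holomorphic $h_z$ with $\frac1{|B(z,1)|}\int_{B(z,1)}|f-h_z|^q\,dv \approx G_{q,1}(f)(z)^q$, glue these local choices with a smooth partition of unity subordinate to a locally finite cover by unit balls to produce a global smooth function $g$ with $|f-g|$ and $|\overline\partial g|$ both controlled pointwise by (a local average of) $G_{q,1}(f)$. Then $H_f u = H_{f-g}u + H_g u$; the first term is handled by the Bergman projection bound once we know $(f-g)u\in L^q(\varphi)$, and for $H_g u$ we solve $\overline\partial w = (\overline\partial g)u$ with the Hörmander/Delin-type weighted estimate $\|w\|_{L^q(\varphi)}\lesssim \|(\overline\partial g)u\|_{L^q(\varphi)}$ valid under $\Hess_{\mathbb R}\varphi\simeq\mathrm E$, so that $H_g u = w - P w$ and $\|H_g u\|_{L^q(\varphi)}\lesssim \|(\overline\partial g)u\|_{L^q(\varphi)}$. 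In case (a), $0<p\le q$, one bounds $\|(\overline\partial g)u\|_{L^q(\varphi)}$ by $\|f\|_{\BDA^q}\|u\|_{F^p(\varphi)}$ using the $F^p(\varphi)$ pointwise estimate $|u(z)|e^{-\varphi(z)}\lesssim \|u\|_{F^p(\varphi)}$ together with the standard ``$\ell^p\hookrightarrow\ell^q$'' embedding on the lattice; in case (b), $q<p$, one instead uses Hölder with exponent $p/q$ on the lattice sum, producing the $\ell^{s}$-norm of $(G_{q,1}(f)(a_j))_j$, i.e. $\|f\|_{\IDA^{s,q}}$, and the boundedness-implies-compactness phenomenon follows because $\ell^s$-summability forces the tail averages to vanish. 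Part (c), $0<p\le q\le1$, is the soft case: for bounded $f$ one does not need the sharp $\overline\partial$-machinery — the trivial bound $\|H_f\|\le\|I-P\|\,\|f\|_{L^\infty}$ on $L^q$ (using that $P$ is bounded on $L^q(\varphi)$ for this range, or a direct Schur-type estimate) gives \eqref{q-is-small}, and the compactness characterization $f\in\VDA^q$ is proved by the same kernel-testing for necessity and, for sufficiency, by approximating $f$ in the relevant local seminorm by compactly supported symbols, for which $H_f$ is compact by a standard normal-families argument.

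The main obstacle I anticipate is the sufficiency direction in the borderline/critical regimes: making the $\overline\partial$-correction $g$ simultaneously a good $L^q$-approximant to $f$ \emph{and} have $\overline\partial g$ controlled pointwise by the \emph{local} quantity $G_{q,1}(f)$ requires a careful partition-of-unity construction together with a Bergman-type interior estimate converting the $L^q$-average of $f-h_a$ on a slightly larger ball into a sup-bound on $h_a-h_b$ for neighbouring lattice points $a,b$ — this is where the holomorphicity of the local pieces and the submean-value property are used, and where the constants must be tracked to get the clean norm equivalences \eqref{bounded-g}, \eqref{q-lessthen-p-a}. The second delicate point is verifying that the weighted $\overline\partial$-estimate with $L^q(\varphi)$ (as opposed to $L^2$) data holds for all $p,q$ in the stated ranges under the hypothesis $\Hess_{\mathbb R}\varphi\simeq\mathrm E$; I would invoke (or adapt) the known $L^q$ Hörmander estimates for such weights, which is precisely why that curvature hypothesis enters the theorem. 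Everything else — the kernel estimates, the lattice decompositions, the $\ell^p$–$\ell^q$ embeddings — is routine once these two ingredients are in place.
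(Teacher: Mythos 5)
Your scheme for parts (a) and (b) is essentially the paper's: necessity by testing against the normalized kernels $k_z$ (and, for (b), against Rademacher combinations over a lattice plus Khintchine and duality with exponent $p/q$), sufficiency by gluing near-optimal local holomorphic approximants $h_j$ with a partition of unity into $f=f_1+f_2$, controlling $|\overline\partial f_1|$ and $M_{q,r}(f_2)$ by $G_{q,2t}(f)$, and then writing $H_{f_1}g = w - Pw$ for a solution $w$ of $\overline\partial w = g\,\overline\partial f_1$. Two points, however, are genuine gaps rather than routine details.

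First, the weighted $L^q$ estimate for the $\overline\partial$-solution is not available off the shelf: H\"ormander's theory gives only $q=2$, and for these non-radial weights there is no known pointwise Gaussian-type kernel decay to transfer it. The paper has to \emph{construct} the solution operator $A_\varphi$ from the Berndtsson--Andersson kernel and prove $\|A_\varphi(\omega)\|_{q,\varphi}\le C\|\omega\|_{q,\varphi}$ by dominating it with the auxiliary operators $\mathcal A_l$ (Lemma~\ref{hankel-and-d-bar}); this is a substantive ingredient your proposal defers to the literature.

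Second, part (c) is not the soft case. For $0<q<1$ the Bergman projection is \emph{not} bounded on $L^q(\varphi)$ (the paper only asserts boundedness for $1\le q\le\infty$), so the estimate $\|H_f\|\le\|I-P\|\,\|f\|_{L^\infty}$ is unavailable, and a Schur test does not apply below exponent $1$. The paper's substitute is Lemma~\ref{integral-est}: a sub-mean-value inequality of the form $\left(\int |g|e^{-\varphi}\,d\mu\right)^p\le C\int |g e^{-\varphi}|^p\,\widehat{\mu}_r^{\,p}\,dv$ for holomorphic $g$ and $0<p\le1$, which is applied to the holomorphic integrand $\xi\mapsto g(\xi)K(z,\xi)$ to control $|P(f_2g)(z)|^q$ and $|P(A_\varphi(g\overline\partial f_1))(w)|^q$ pointwise, followed by Fubini against the kernel decay $e^{-\theta|z-w|}$. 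Relatedly, your plan to get compactness in (c) by approximating $f$ with compactly supported symbols cannot work in the only norm you control a priori ($L^\infty$), since $\|f\chi_{\{|z|>R\}}\|_{L^\infty}$ need not tend to $0$ for $f\in\VDA^q$; the paper instead reuses the $f_1+f_2$ decomposition with vanishing Carleson measures and the pointwise estimates above. There is also a small subtlety you skip: for $q<1$ the kernel testing only yields $\lim_{z\to\infty}G_{q,r}(f)(z)=0$ for small $r\le r_0$, and the independence of this limit on $r$ (Corollary~\ref{0<q<1-G}) requires a separate argument via the standard Gaussian weight.
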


We first note that Theorem~\ref{main1} is new even for Hankel operators acting from $F^2$ to $L^2$. Previously only characterizations for $H_f$ and $H_{\bar f}$ to be {\it simultaneously} bounded (or \textit{simultaneously} compact) were known. These were given in terms of the bounded (or vanishing) mean oscillation of $f$ by Bauer \cite{Ba05} for $F^2$ and by Hu and Wang~\cite{HW18} for Hankel operators from $F^p_\alpha$ to $L^q_\alpha$. In Theorem~\ref{doubling-bdd} of Section~\ref{Further remarks}, we obtain these results as a simple consequence of Theorem~\ref{main1}. We also mention our recent work~\cite{HV22}, which gives a complete characterization of Schatten class Hankel operators.

Theorem \ref{main1} should also be compared with the results for Hankel operators on Bergman spaces $A^p$. Indeed, characterizations for boundedness and compactness can be found in~\cite{Ax86} for anti-analytic symbols, in~\cite{HV19} for bounded symbols, and in~\cite{HL19, Li94, Lu92, PZZ16} for unbounded symbols. What makes the study of the two cases different are the properties such as $F^p\subset F^q$ for $p\le q$ (as opposed to $A^q\subset A^p$) and certain nice geometry on the boundary of these bounded domains, which in turn helps with the treatment of the $\bar \partial$-problem.

What is very different about the results on Hankel operators acting on these two types of spaces is that our next result is only true in Fock spaces (see~\cite{HV19} for an interesting counterexample for the Bergman space).

\begin{theorem}\label{main2}
Let $f\in L^\infty$ and suppose that $\mathrm{Hess}_{\mathbb R}\varphi \simeq \mathrm{E}$ as in~\eqref{weights}. If $0<p\le q<\infty$ or $1\le q < p<\infty$, then $H_f : F^p(\varphi) \to L^q(\varphi)$ is compact if and only if $H_{\bar f}$ is compact.
\end{theorem}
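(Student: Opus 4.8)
The plan is to derive Theorem~\ref{main2} from the characterizations in Theorem~\ref{main1} by showing that, \emph{for a bounded symbol}, the one‑sided conditions appearing there are equivalent to conjugation‑symmetric mean‑oscillation conditions. For $0<q<\infty$ and $0<s\le\infty$ put
\[
  \mathrm{MO}_q(f)(z)=\Big(\inf_{c\in\C}\frac{1}{|B(z,1)|}\int_{B(z,1)}|f-c|^q\,dv\Big)^{1/q}\qquad(z\in\C^n),
\]
and write $f\in\VMO^q$ if $\mathrm{MO}_q(f)(z)\to0$ as $z\to\infty$, and $f\in\BMO^{s,q}$ if $\mathrm{MO}_q(f)\in L^s$. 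Since constants are holomorphic, $G_{q,1}(f)\le\mathrm{MO}_q(f)$ pointwise, so trivially $\VMO^q\subseteq\VDA^q$ and $\BMO^{s,q}\subseteq\IDA^{s,q}$; and since $|\bar f-\bar c|=|f-c|$, both $\VMO^q$ and $\BMO^{s,q}$ are invariant under $f\mapsto\bar f$. The heart of the matter will be the reverse inclusions for bounded $f$:
\[
  L^\infty\cap\VDA^q=L^\infty\cap\VMO^q,\qquad L^\infty\cap\IDA^{s,q}=L^\infty\cap\BMO^{s,q}.
\]

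Granting these, Theorem~\ref{main2} follows at once. When $0<p\le q<\infty$, Theorem~\ref{main1}(a) (for $q\ge1$) and Theorem~\ref{main1}(c) (for $q\le1$) give that $H_f$ is compact iff $f\in\VDA^q$, and then
\[
  H_f\ \text{compact}\iff f\in\VDA^q\iff f\in\VMO^q\iff\bar f\in\VMO^q\iff\bar f\in\VDA^q\iff H_{\bar f}\ \text{compact},
\]
where the second and fourth equivalences use that $f$ and $\bar f$ are bounded. When $1\le q<p<\infty$, Theorem~\ref{main1}(b) gives that $H_f$ is compact iff it is bounded iff $f\in\IDA^{s,q}$ with $s=pq/(p-q)<\infty$, and one runs the identical chain with $\VDA^q,\VMO^q$ replaced by $\IDA^{s,q},\BMO^{s,q}$.

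To prove the reverse inclusions I would pass to large balls. First I would use the local‑to‑global estimates for these spaces from Section~\ref{space-IDA} (patch local holomorphic approximants with the $\bar\partial$‑equation) to bound $G_{q,R}(f)(z)$ by local $L^q$‑averages of $G_{q,1}(f)$ over $B(z,cR)$; in particular $f\in\VDA^q$ forces $G_{q,R}(f)(z)\to0$ for every $R$. Next, fix $R\ge2$ and let $h\in H(B(z,R))$ nearly attain $G_{q,R}(f)(z)$; if $\|f\|_\infty\le M$ then $\|h\|_{L^q(B(z,R))}\lesssim M|B(z,R)|^{1/q}$, so the sub‑mean‑value property gives $\|h\|_{L^\infty(B(z,3R/4))}\lesssim M$ and the Cauchy estimates bound the first‑order derivatives of $h$ on $B(z,1)$ by $CM/R$; hence $h$ is within $CM/R$ of the constant $h(z)$ on $B(z,1)$, and therefore
\[
  \mathrm{MO}_q(f)(z)\ \lesssim\ R^{2n/q}\,G_{q,R}(f)(z)+\frac{M}{R}.
\]
For the $\VDA$ statement this suffices: given $\delta>0$ I choose $R$ with $CM/R<\delta/2$ and then $z$ far enough out that $R^{2n/q}G_{q,R}(f)(z)<\delta/2$, so $\mathrm{MO}_q(f)(z)\to0$. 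For the $\IDA^{s,q}$ statement the constant term $M/R$ is not in $L^s$, so I would instead let the radius grow, $R=R(z)=2+|z|^{\beta}$ with $\beta>2n/s$ so that $M/R(\cdot)\in L^s$, control $R(z)^{2n/q}G_{q,R(z)}(f)(z)$ by local $L^q$‑averages of $G_{q,1}(f)$ over $B(z,cR(z))$ via the Section~\ref{space-IDA} machinery, and finally bound the $L^s$‑norm of this quantity by $\|G_{q,1}(f)\|_{L^s}$ through a Fubini/Young‑type argument that trades the polynomial growth of $R(z)$ against the $L^s$‑integrability of $G_{q,1}(f)$.

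The hard part will be exactly this last step. The space $\IDA^{s,q}$ is genuinely one‑sided and not conjugation‑invariant in general --- for instance the coordinate function $z\mapsto z_1$ lies in every $\VDA^q$ and every $\IDA^{s,q}$ while its conjugate lies in none --- so the equivalence with $\BMO^{s,q}$ cannot hold without boundedness, and, in contrast to the vanishing case, the term $M/R$ can only be absorbed into $L^s$ by letting the radius depend on $z$. Quantifying how $G_{q,R}(f)$ depends on a growing radius $R$ and balancing that dependence against the $L^s$‑integrability of $G_{q,1}(f)$ is where the real difficulty lies; the reduction to mean oscillation and the Cauchy‑estimate argument on large balls are routine by comparison.
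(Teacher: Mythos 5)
Your treatment of the case $0<p\le q<\infty$ is sound and is close in spirit to the paper's own argument: the paper also takes near-minimizers $h_{j,R}$ on balls of radius $\simeq R$, notes that boundedness of $f$ forces $\sup|h_{j,R}|\le C\|f\|_{L^\infty}$, and uses a Cauchy/Bergman-kernel estimate to get $|\overline\partial\,\overline{h_{j,R}}|\le C\|f\|_{L^\infty}/R$; it then assembles these into a decomposition $\bar f=\overline{f_{1,R}}+\overline{f_{2,R}}$ and lets $R\to\infty$ at the level of operator norms, whereas you let $R\to\infty$ at the level of the function space and invoke the conjugation-invariance of $\mathrm{VMO}^q$. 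Either route works; for $0<q<1$ you do need Corollary~\ref{0<q<1-G} to know that the vanishing of $G_{q,R}(f)$ is independent of $R$, which the paper supplies for bounded symbols.

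The case $1\le q<p<\infty$ is where your proposal has a genuine gap, and it sits exactly at the step you flag. Your scheme requires $\|R(\cdot)^{2n/q}G_{q,R(\cdot)}(f)\|_{L^s}\lesssim\|G_{q,1}(f)\|_{L^s}+\|f\|_{L^\infty}$ with $R(z)\to\infty$. But $R^{2n/q}G_{q,R}(f)(z)$ is, up to constants, the \emph{unnormalized} $L^q$ distance of $f$ to $H(B(z,R))$ over the whole ball, and no Fubini/Young-type argument bounds its $L^s$ norm by $\|G_{q,1}(f)\|_{L^s}$: testing against $G_{q,1}(f)\simeq\chi_{B(0,\rho)}$, the best local-to-global estimate produces a left-hand side of order at least $\rho^{2n(1+s/q)}$ against a right-hand side of order $\rho^{2n}$, and $s/q>1$. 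Moreover the $\overline\partial$-step itself costs an extra factor: by scaling, the Henkin estimate on $B(z,R)$ reads $\|u\|_{L^q(B(z,R))}\le CR\,\|\overline\partial f_1\|_{L^q(B(z,R))}$, so $G_{q,R}(f)(z)\lesssim R\,M_{q,2R}(\overline\partial f_1)(z)+M_{q,R}(f_2)(z)$, which only worsens matters. The tension you identify --- $M/R\in L^s$ forces $R(z)\to\infty$, while the oscillation term degrades as $R$ grows --- is thus a structural obstruction to any purely local argument, not a technicality to be optimized away. The paper resolves it with a non-local tool: for bounded $f_1\in C^2$ one has $\partial f_1/\partial z_j=\mathfrak T(\partial f_1/\partial\bar z_j)$ up to a cutoff, where $\mathfrak T$ is the Ahlfors--Beurling operator, whence $\|\overline\partial\,\overline{f_1}\|_{L^s}\le C\|\overline\partial f_1\|_{L^s}$ (Lemma~\ref{partial-derivatives}); combined with $M_{q,r}(\overline{f_2})=M_{q,r}(f_2)$ and Theorem~\ref{BDA-equivalence}, this gives $\bar f\in\IDA^{s,q}$ directly. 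You would need this, or some comparably global $L^s$ estimate relating $\partial$ and $\overline\partial$ derivatives of bounded functions, to close your argument. Note also that your target identity $L^\infty\cap\IDA^{s,q}=L^\infty\cap\mathrm{BMO}^{s,q}$ is true, but it is essentially equivalent to the theorem (via $\IDA^{s,q}\cap\overline{\IDA^{s,q}}=\mathrm{IMO}^{s,q}$) rather than an independent stepping stone toward it.
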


For Hankel operators on the Fock space $F^2$, Theorem~\ref{main2} was proved by Berger and Coburn~\cite{BC87} using $C^*$-algebra and Hilbert space techniques and by Stroethoff~\cite{St92} using elementary methods. More recently in~\cite{HV19}, limit operator techniques were used to treat the reflexive Fock spaces $F^p_\alpha$. However, our result is new even in the Hilbert space case because of the more general weights that we consider. As a natural continuation of our present work, in~\cite{HV22}, we prove that, for $f\in L^\infty$, the Hankel operator $H_f$ is in the Schatten class $S_p$ if and only if $H_{\bar f}$ is in the Schatten class $S_p$ provided that $1<p<\infty$.

As an application and further generalization of our results, in Section~\ref{quantization}, we provide a complete characterization of those $f\in L^\infty$ for which
\begin{equation}\label{e:R2}
	\lim_{t\to 0} \|T^{(t)}_f T^{(t)}_g - T^{(t)}_{fg}\|_t = 0
\end{equation}
for all $g\in L^\infty$, where $T^{(t)}_f = P^{(t)}M_f : F^2_t(\varphi) \to F^2_t(\varphi)$ and $P^{(t)}$ is the orthogonal projection of $L^2_t(\varphi)$ onto $F^2_t(\varphi)$. Here $L^2_t = L^2(\C^n, d\mu_t)$ and
$$
	d\mu_t (z)= \fr 1{t^n} \exp\left \{-  2\varphi\left( \fr    z { \sqrt{t}}\right)\right \} dv(z).
$$
The importance of the semi-classical limit in~\eqref{e:R2} stems from the fact that it is one of the essential ingredients of the deformation quantization of Rieffel~\cite{Ri89, Ri90} in mathematical physics. Our conclusion related to (\ref{e:R2}) extends and complements the main result in~\cite{BCH18}.

\subsection{Approach}
A careful inspection shows that the methods and techniques used in~\cite{ BC86, BC87,  HV19,  PSV14, St92} depend heavily upon the following three aspects. First, the explicit representation of the Bergman kernel $K(z, w)$ for standard weights $\varphi(z)= \fr \alpha 2 |z|^2$ has the property that
\begin{equation}\label{classical}
  K(z,w) e^{-\fr \alpha 2 |z|^2 -\fr \alpha 2 |w|^2 } = e^{\fr \alpha 2 |z-w|^2}.
\end{equation}
However, for the class of weights we consider, this quadratic decay is known not to hold (even in dimension $n=1$), and is expected to be very rare \cite{Ch91}. The second aspect involves the  Weyl unitary operator $W_a$ defined as
$$
     W_af= f\circ \tau_a k_a,
 $$
where $\tau_a$ is the translation by $a$ and $k_a$ is the normalized reproducing kernel. As a unitary operator on $F^p_\alpha$ (or on $L^p_\alpha$),  $W_a$ plays a very important role in the theory of the Fock spaces $F^p_\alpha$ (see \cite{Zh12}). Unfortunately, no analogue of Weyl operators is currently available for $F^p(\varphi)$ when $\varphi\neq \fr \alpha 2 |w|^2 $. The third aspect we mention is Banach (or Hilbert) space techniques, such as the adjoint (for example, $H_f^*$) and the duality. However, when $0<p<1$,  $F^p(\varphi)$ is only an $F$-space (in the sense of~\cite{Rudin}) and the usual Banach space techniques can no longer be applied.

To overcome the three difficulties mentioned above, we introduce function spaces $\IDA$, $\BDA$ and $\VDA$, and develop their theory, which we use to characterize those symbols $f$ such that $H_f$ are bounded (or compact) from $ F^p(\varphi)$ to  $L^q(\varphi)$. Our characterization of the boundedness of $H_f$ extends the main results of~\cite{Ba05, HW18, PSV14}. It is also worth noting that as a natural generalization of $\BMO$, the space $\IDA$ will have its own interest and will likely be useful to study other (related) operators (such as Toeplitz operators).

In our analysis, we appeal to the $\overline \partial$-techniques several times.  As the canonical solution to $\overline \partial u = g \partial f$, $H_f g$ is naturally connected with the $\overline \partial$-theory. H\"{o}rmander's theory provides us with the $L^2$-estimate, but less is known about $L^p$-estimates on $\Cn$ when $p\neq 2$. With the help of a certain auxiliary integral operator, we obtain $L^p$-estimates of the Berndtsson-Anderson's solution~\cite{BA82} to $\overline \partial$-equation.  Our approach to handling weights whose curvature is uniformly comparable to the Euclidean metric form is similar to the treatment in~\cite{SV12} which was initiated by Berndtsson and Ortega-Cerd\`{a} in~\cite{BO95}, and a number of the techniques we use here were inspired by this approach. Although the work in~\cite{BO95} is restricted to $n=1$, some of the results were extended by Lindholm to higher dimensions in~\cite{Li01}, and the others are easy to modify.

The outline of the paper is  as follows. In Section~\ref{prelimi} we study preliminary results on the Bergman kernel which are needed throughout the paper, and we also establish estimates for the  $\overline \partial$-solution developed in~\cite{BA82}. In Section~\ref{space-IDA}, a notion of  function spaces $\mathrm{IDA}^{s, q}$ is introduced.   We obtain a useful decomposition for functions in $\mathrm{IDA}^{s, q}$ (compare with the decompositions of $\BMO$ and $\VMO$). Using this decomposition, we obtain the completeness of $\IDA^{s, q}/H(\Cn)$  in $\|\cdot\|_{\mathrm{IDA}^{s, q}}$. In Sections~\ref{hankel-A} and~\ref{bdd-symbols} we prove Theorems~\ref{main1} and~\ref{main2}, respectively. For the latter theorem, we also appeal to the Calder\'on-Zygmund theory of singular integrals, and in particular employ the Ahlfors-Beurling operator to obtain certain estimates on $\partial$ and $\overline\partial$ derivatives. In Section~\ref{quantization}, we present an application of our results to quantization. In the last section, we give further remarks together with two conjectures.

Throughout the paper, $C$ stands for positive constants which may  change from line to line, but does not depend on functions being considered. Two quantities $A$ and $B$ are called equivalent, denoted by $A\simeq B$, if there exists some $C$ such that $C^{-1} A \leq B \leq C A$.

\section{Preliminaries}\label{prelimi}
Let $ {\C}^n={\mathbb R}^{2n}$ be the $n$-dimensional complex Euclidean space and denote by  $v$ the Lebesgue measure on ${\C}^n$. For $z=(z_1, \cdots, z_n)$ and $w=(w_1, \cdots,
w_n)$ in ${\C}^n$, we write $z\cdot {\overline w} =z_1\overline{w}_1+\cdots+z_n\overline{w}_n$  and $|z|=\sqrt{z\cdot {\overline z} }$. Let $H({\C}^n)$ be the family of all holomorphic functions on ${\C}^n$. Given a domain $\Omega$ in $\Cn$ and a positive Borel measure $\mu$ on $\Omega$, we denote by $L^p(\Omega, d\mu)$ the space of all Lebesgue measurable functions $f$ on $\Omega$ for which
$$
	\|f\|_{L^p(\Omega, d\mu)}= \left\{\int_\Omega |f|^pd\mu\right\}^{\fr 1p}<\infty
	\,\, \textrm{ for } \,\, 0<p< \infty
$$
and  $\|f\|_{L^\infty(\Omega, dv)}=\esssup_{z\in \Omega} |f(z)|<\infty$ for $p=\infty$. For ease of notation, we simply write $L^p$ for the space $L^p(\C^n, dv)$.

\subsection{Weighted Fock spaces}\label{weighted Fock spaces}
For a real-valued weight $\varphi\in C^2({\mathbb C}^{n})$ and $0<p<\infty$, denote by
$L^{p}(\varphi)$ the space $L^p(\Cn, e^{-p\varphi}dv)$ with norm
$\|\cdot\|_{p, \varphi}= \|\cdot\|_{L^p(\Cn, e^{-p\varphi}dv)}$. Then the Fock space $F^p(\varphi)$ is defined as
$$
	F^{p}(\varphi)=L^{p}(\varphi)\cap H({\C}^n)
$$
and
$$
  F^{\infty}(\varphi)=\left\{f\in H({\C}^n):\|f\|_{\infty, \varphi}= \sup\limits_{z\in {\C}^n} |f(z)|e^
{-\varphi(z)}<\infty\right\}.
$$
For $1\leq p\leq\infty$,  $F^{p}(\varphi)$ is a Banach space in the norm $\|\cdot\|_{p, \varphi}$ and $F^2(\varphi)$ is a Hilbert space.  For $0<p<1$, $F^{p}(\varphi)$ is an $F$-space with metric given by $d(f,g)=\|f-g\|^p_{p, \varphi}$.

Other related and widely studied holomorphic function spaces include the Bergman spaces $A^p_\alpha(\B^n)$ of the unit ball $\B^n$ consisting of all holomorphic functions $f$ in $L^p(\B^n, dv_\alpha)$, where $0<p<\infty$, $dv_\alpha(z) = (1-|z|^2)^\alpha\, dv(z)$ and $\alpha>-1$.

In this paper we are interested in Fock spaces $F^p(\varphi)$ with certain uniformly convex weights $\varphi$. More precisely,  suppose $\varphi= \varphi (x_1, x_2,  \cdots, x_{2n}) \in   C^2( {\mathbb R}^{2n})$ is real valued, and there are positive constants $m$ and $M$ such that  $\mathrm{Hess}_{\mathbb R}\varphi$, the real Hessian, satisfies
\begin{equation}\label{weights}
	m{\mathrm E}  \le  \mathrm{Hess}_{\mathbb R}\varphi(x)
	= \left( \fr {\partial^2 \varphi(x)}{\partial x_j \partial x_k} \right)_{j, k=1}^{2n}\le M {\mathrm E}
\end{equation}
where ${\mathrm E}$ is the $2n\times 2n$ identity matrix; above, for symmetric matrices $A$ and $B$, we used the convention that $A \le B$ if $B-A$ is positive semi-definite. When \eqref{weights} is satisfied, we write $\mathrm{Hess}_{\mathbb R}\varphi \simeq {\mathrm E}$. A typical model of such weights is given by $\varphi(z)=\frac{\alpha }{2}|z|^2$ for $z=(z_1, z_2, \cdots, z_n)$ with $z_j=x_{2j-1} +\mathrm{i} x_{2j}$, which induces the weighted Fock space $F^p_\alpha$ studied by many authors (see, e.g.,~\cite{Zh12}). Another popular example is $\varphi(z)= |z|^2 -\fr 12\log (1+|z|^2)$, which gives the so-called Fock-Sobolev spaces studied for example in~\cite{CZ12}. Notice that the weights $\varphi$ satisfying~\eqref{weights} are not only radial functions as the example $\varphi(z)= |z|^2 + \sin[(z_1+\overline z_1)/2]$ clearly shows.

For $x=(x_1, x_2,  \cdots, x_{2n}),  \ t=(t_1, t_2, \cdots, t_{2n})\in {\mathbb R}^{2n}$, write
$ z_j=x_{2j-1} + \mathrm i x_{2j } $, $ \xi_j=t_{2j-1} + \mathrm i t_{2j } $ and $\xi=(\xi_1, \xi_2, \cdots, \xi_n)$. An elementary calculation similar to that on page~125 of~\cite{Kr82} shows
\begin{align*}
   \mathrm {Re} \sum_{j, k=1}^{ n} \fr { \partial ^2 \varphi}{\partial z_j {  \partial} z_k}(z)  \xi_j {\xi}_k
  +  \sum_{j, k=1}^{ n} \fr { \partial ^2 \varphi}{\partial z_j { \partial} \overline { z}_k}(z)  \xi_j \overline{\xi}_k
	&=\frac12 \sum_{j, k=1}^{2n} \fr { \partial ^2 \varphi}{\partial x_j \partial x_k}(x)  t_j t_k\\ 	&\ge  \fr 12  m|\xi|^2.
\end{align*}
Replacing $\xi$ with ${\mathrm i}\xi$ in the above inequality gives
$$
	-\mathrm {Re} \sum_{j, k=1}^{ n} \frac{ \partial ^2 \varphi}{\partial z_j {  \partial} z_k}(z)  \xi_j {\xi}_k +  \sum_{j, k=1}^{ n} \frac{\partial ^2 \varphi}{\partial z_j { \partial} \overline { z}_k}(z)  \xi_j \overline{\xi}_k
	\ge \frac12  m|\xi|^2.
$$
Thus,
$$
	\sum_{j, k=1}^{n} \frac{\partial ^2 \varphi}{\partial z_j { \partial} \overline { z}_k}(z)  \xi_j \overline{\xi}_k
	\ge  \frac12 m|\xi|^2.
$$
Similarly, we have an upper bound for the complex Hessian of $\varphi$. Therefore,
$
   m  \omega_0 \le dd^c \varphi \le M \omega_0
$,
where $\omega_0=dd^c |z|^2$ is the
Euclidean K\"{a}hler form on ${\C}^n$ and  $d^c=\frac{\sqrt{-1}}{4}\left(\overline{\partial}-\partial\right)$.  This implies that the theory  in \cite{SV12} and \cite{HL14} is applicable in the present setting.

For $z\in{{\C}}^n$ and $r>0$, let $B(z, r)=\left\{w\in{{\C}}^n: \left|w-z\right|<r\right\}$ be the ball with center at $z$ with radius $r$. For the proof of the following weighted Bergman inequality, we refer to Proposition~2.3 of~\cite{SV12}.

\begin{lemma}\label{submean-value}
Suppose $0<p\le \infty$. For each $r>0$ there is some $C>0$ such that if $f\in F^p(\varphi)$ then
$$
   \left| f(z) e^{-\varphi(z)}\right|^p \le C \int_{B(z, r)} \left| f(\xi) e^{-\varphi(\xi)}\right|^p dv(\xi).
$$
\end{lemma}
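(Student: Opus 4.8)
The plan is to reduce the weighted pointwise estimate to the classical (solid) sub-mean value inequality for subharmonic functions, using the hypothesis $\Hess\varphi\simeq\mathrm E$ to replace the factor $e^{-\varphi}$, locally around each point, by the modulus of a nowhere-vanishing holomorphic function. The point is that although $\varphi$ itself is only uniformly convex (so its gradient may grow), a uniform two-sided bound on $\Hess\varphi$ forces $\varphi$ to be well approximated on every unit ball by a pluriharmonic function, with approximation constant independent of the center.

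\textbf{Step 1: a local holomorphic correction of the weight.} Fix $z\in\Cn$ and let $q_z$ be the holomorphic polynomial of degree at most $2$ obtained by keeping the constant, linear, and holomorphic quadratic parts of the Taylor expansion of $\varphi$ at $z$, that is,
$$
q_z(\xi)=\varphi(z)+2\sum_{j=1}^n\frac{\partial\varphi}{\partial z_j}(z)(\xi_j-z_j)+\sum_{j,k=1}^n\frac{\partial^2\varphi}{\partial z_j\partial z_k}(z)(\xi_j-z_j)(\xi_k-z_k).
$$
Then $\mathrm{Re}\,q_z(z)=\varphi(z)$, and Taylor's formula with integral remainder, together with the bound $\Hess\varphi\le M\mathrm E$ (which, as shown just above in the preliminaries, also controls the complex Hessian and the coefficients $\partial^2\varphi/\partial z_j\partial z_k$), gives
$$
\big|\varphi(\xi)-\mathrm{Re}\,q_z(\xi)\big|\le C_0|\xi-z|^2\qquad(\xi\in\Cn),
$$
where $C_0$ depends only on $M$ and $n$, not on $z$. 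In particular the error is at most $C_0r^2$ on $B(z,r)$.

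\textbf{Step 2: subharmonicity and conclusion.} The function $\xi\mapsto f(\xi)e^{-q_z(\xi)}$ is holomorphic on $\Cn$, so $|fe^{-q_z}|^p$ is subharmonic for every $0<p<\infty$, and the sub-mean value inequality $u(z)\le|B(z,r)|^{-1}\int_{B(z,r)}u\,dv$ for subharmonic $u$ yields
$$
\big|f(z)e^{-q_z(z)}\big|^p\le\frac1{|B(z,r)|}\int_{B(z,r)}\big|f(\xi)e^{-q_z(\xi)}\big|^p\,dv(\xi).
$$
Substituting $|e^{-q_z(z)}|=e^{-\varphi(z)}$ on the left and $|e^{-q_z(\xi)}|=e^{-\mathrm{Re}\,q_z(\xi)}\le e^{C_0r^2}e^{-\varphi(\xi)}$ on the right gives the assertion with $C=e^{pC_0r^2}/|B(z,r)|$. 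For $p=\infty$ one argues identically, replacing the averaging step by the maximum modulus principle $|f(z)e^{-q_z(z)}|\le\sup_{B(z,r)}|fe^{-q_z}|$ and applying the same weight comparison.

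\textbf{Main obstacle.} The only genuine work is Step 1: writing down $q_z$ and verifying that the quadratic remainder is bounded by a single constant $C_0=C_0(M,n)$ uniformly in $z$. This is precisely where the \emph{uniform} two-sided Hessian bound \eqref{weights} enters; a naive estimate such as $|\varphi(\xi)-\varphi(z)|\le\big(\sup_{B(z,r)}|\nabla\varphi|\big)r$ is useless because its constant grows with $z$. Everything after Step 1 is the standard subharmonic mean value argument. (This is Proposition~2.3 of \cite{SV12}; the local pluriharmonic approximation of $\varphi$ goes back to \cite{BO95, Li01}.)
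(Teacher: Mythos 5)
Your proof is correct. The paper does not prove this lemma itself but cites Proposition~2.3 of \cite{SV12}, and your argument is precisely the standard one behind that result: replace $\varphi$ on $B(z,r)$ by the real part of a holomorphic function with uniformly controlled error, then apply the sub-mean value inequality to the subharmonic function $|fe^{-q_z}|^p$. The one point worth noting is that your Step~1 is a genuine simplification made possible by the stronger hypothesis $\Hess\varphi\simeq\mathrm{E}$: because the full real Hessian (not just the complex Hessian $dd^c\varphi$) is uniformly bounded, the explicit second-order Taylor polynomial $q_z$ does the job with $C_0=C_0(M,n)$, whereas in the more general setting of \cite{SV12} (only ${\mathrm i}\partial\overline\partial\varphi\simeq\omega_0$) one must invoke the Berndtsson--Ortega-Cerd\`a/Lindholm lemma producing a holomorphic $h_z$ on $B(z,r)$ with $\sup_{B(z,r)}|\varphi-\mathrm{Re}\,h_z|\le C$.
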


It follows from the preceding lemma that  $\|f\|_{q, \varphi}\le C\|f\|_{p, \varphi}$ and
\begin{equation}\label{nest-prop}
 F^p(\varphi)  \subseteq F^q(\varphi) \,\, \textrm{ for }\, 0<p\le q\le \infty.
\end{equation}
This inclusion  is completely different from that of the Bergman spaces.

 \begin{lemma}\label{basic-est}
 There exist positive constants $\theta$ and $C_1$, depending only on $n$, $m$ and $M$ such that
\begin{equation}\label{basic-est-a}
\left|K(z,w)\right|\leq  C_1 e^{ \varphi(z)+\varphi(w)} e^{-\theta|z-w|}\,  \textrm{ for all }\, z, w\in{{\C}}^n,
\end{equation}
and  there exists positive constants $C_2$ and $r_0 $ such that
\begin{equation}\label{basic-est-b}
\left|K(z,w)\right|\geq C_2 e^{ \varphi(z)+\varphi(w)}
\end{equation}
for   $z\in{{\C}}^n$ and  $w\in B\left(z, r_0\right)$.
\end{lemma}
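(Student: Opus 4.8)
The plan is to derive both \eqref{basic-est-a} and \eqref{basic-est-b} from three ingredients, each of which combines the sub-mean value inequality of Lemma~\ref{submean-value} with H\"ormander's $L^2$-estimate for $\overline\partial$ — available here precisely because $m\omega_0\le dd^c\varphi\le M\omega_0$ — namely (i) the diagonal estimate $K(z,z)\simeq e^{2\varphi(z)}$, (ii) an integrated exponential decay of $K(\cdot,w)$ away from $w$, and (iii) a near-diagonal lower bound. The upper bound in (i) is immediate: applying Lemma~\ref{submean-value} to the holomorphic function $K(\cdot,z)$ gives $K(z,z)^2e^{-2\varphi(z)}\le C\int_{B(z,1)}|K(\xi,z)|^2e^{-2\varphi(\xi)}\,dv(\xi)\le C\,K(z,z)$, whence $K(z,z)e^{-2\varphi(z)}\le C$; the reproducing property then yields $|K(z,w)|\le\sqrt{K(z,z)K(w,w)}\le Ce^{\varphi(z)+\varphi(w)}$, which already proves \eqref{basic-est-a} when $|z-w|$ is bounded and, together with the lower bound in (i), proves \eqref{basic-est-b} at $w=z$. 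Concrete realisations of (i)--(iii) in this generality go back to Berndtsson--Ortega-Cerd\`a~\cite{BO95} in dimension one, with the higher-dimensional steps carried out by Lindholm~\cite{Li01} and the uniform formulation given by Schuster--Varolin~\cite{SV12}.

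For the lower bound in (i) I would use a peak function. Let $p_z$ be the holomorphic first-order Taylor polynomial of $2\varphi$ at $z$, so that $|2\varphi-\mathrm{Re}\,p_z|\le CM|\cdot-z|^2$ near $z$; cut off $e^{p_z/2}$ by a bump $\chi$ equal to $1$ on $B(z,\delta_0)$ and supported in $B(z,2\delta_0)$, and solve $\overline\partial u=\overline\partial(\chi e^{p_z/2})=e^{p_z/2}\,\overline\partial\chi$ by H\"ormander's theorem against a weight of the form $2\varphi+2n\log|\cdot-z|$, truncated away from $z$ so as to remain uniformly comparable to $2\varphi$. Since $dd^c\varphi\ge m\omega_0$ and the log-pole contributes a nonnegative current, one obtains $\|u\|_{2,\varphi}^2\le Cm^{-1}e^{2\varphi(z)}$ while the pole forces $u(z)=0$; hence $g:=\chi e^{p_z/2}-u\in F^2(\varphi)$ satisfies $|g(z)|=e^{\varphi(z)}$ and $\|g\|_{2,\varphi}\le Ce^{\varphi(z)}$, so that $K(z,z)\ge|g(z)|^2/\|g\|_{2,\varphi}^2\ge ce^{2\varphi(z)}$, with all constants depending only on $n,m,M$.

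The main point, and the step I expect to be the real obstacle, is the exponential decay in \eqref{basic-est-a}. Applying Lemma~\ref{submean-value} to $K(\cdot,w)$ on $B(z,1)$, on which $|\xi-w|\ge|z-w|-1$, it suffices to prove the integrated estimate
\[
	\int_{\Cn}|K(\xi,w)|^2e^{-2\varphi(\xi)}e^{2\theta|\xi-w|}\,dv(\xi)\le Ce^{2\varphi(w)}
\]
for some $\theta,C$ depending only on $n,m,M$, since this gives $|K(z,w)|^2e^{-2\varphi(z)}\le Ce^{2\theta}e^{2\varphi(w)}e^{-2\theta|z-w|}$. To prove the integrated estimate one perturbs the weight: fix a smooth plurisubharmonic $\rho_w$ with $\rho_w=|\cdot-w|$ for $|\cdot-w|\ge1$, $\rho_w\ge0$, $|\overline\partial\rho_w|\le1$ and $dd^c\rho_w\le C_0\omega_0$ (for instance a normalisation of $(1+|\cdot-w|^2)^{1/2}$), and choose $\theta$ with $\theta C_0<m/2$, so that $dd^c(\varphi-\theta\rho_w)\ge(m/2)\omega_0$ with constants independent of $w$. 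Running the Berndtsson--Ortega-Cerd\`a / Lindholm argument with $\varphi$ replaced by $\varphi-\theta\rho_w$ in the H\"ormander estimates, and feeding in (i), one controls $\|K(\cdot,w)\|_{L^2(\varphi-\theta\rho_w)}^2$ by $Ce^{2\varphi(w)}$, which is the integrated estimate. The delicate part is the bootstrap turning an a priori merely polynomially decaying tail bound into a genuine exponential one while staying insensitive to the zero set of $K(\cdot,w)$ (which obstructs cruder potential-theoretic arguments); this is precisely where we invoke \cite{SV12}, with \cite{Li01} supplying the higher-dimensional versions of the one-dimensional estimates of \cite{BO95}.

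Finally, I would derive \eqref{basic-est-b} from the lower bound in (i) by a uniform continuity argument for the phase-normalised kernels. Writing
\[
	K(z,w)e^{-\varphi(z)-\varphi(w)}=\big\langle K(\cdot,w)e^{-\varphi(w)},\,K(\cdot,z)e^{-\varphi(z)}\big\rangle_\varphi
\]
and factoring out the common first-order pluriharmonic phases of $\varphi$ at $z$ and $w$ (which removes the growth of $\nabla\varphi$ from the estimate), one shows that $|K(z,w)|e^{-\varphi(z)-\varphi(w)}\ge\tfrac12 K(z,z)e^{-2\varphi(z)}$ once $|z-w|<r_0$ for a sufficiently small $r_0=r_0(n,m,M)$, and then $|K(z,w)|\ge\tfrac12\big(K(z,z)e^{-2\varphi(z)}\big)e^{\varphi(z)+\varphi(w)}\ge\tfrac c2 e^{\varphi(z)+\varphi(w)}$ on $B(z,r_0)$; this last estimate, like the others, is part of the localised-frame machinery of \cite{SV12}.
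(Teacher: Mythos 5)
Your outline is essentially correct, but you should know that the paper does not prove this lemma at all: it simply records that \eqref{basic-est-a} is due to Christ~\cite{Ch91} for $n=1$ and Delin~\cite{De98} for $n\ge 2$, and that \eqref{basic-est-b} can be found in Schuster--Varolin~\cite{SV12}. What you have written is a reconstruction of the standard proofs behind those citations, and the skeleton is the right one: the diagonal estimate $K(z,z)\simeq e^{2\varphi(z)}$ via Lemma~\ref{submean-value} plus a H\"ormander peak function; the reduction of the pointwise decay to the integrated estimate by applying Lemma~\ref{submean-value} on $B(z,1)$; the perturbation $\varphi\mapsto\varphi-\theta\rho_w$ with $\rho_w$ a regularisation of $|\cdot-w|$; and a Cauchy-estimate/uniform-continuity argument for the near-diagonal lower bound. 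Two comments. First, the hypotheses you impose on $\rho_w$ --- uniformly bounded gradient as well as uniformly bounded complex Hessian --- are exactly the right ones; the gradient bound is what confines the method to exponential rather than Gaussian decay, consistent with the paper's remark after \eqref{classical} that quadratic decay fails for this class of weights. Second, the one step you do not actually carry out is the key one: the bound $\|K(\cdot,w)\|^2_{L^2(\varphi-\theta\rho_w)}\le Ce^{2\varphi(w)}$, which by duality reduces to the uniform boundedness of $P$ on $L^2(\varphi+\theta\rho_w)$ together with the diagonal estimate for the perturbed weight; this is precisely the content of \cite{De98} and \cite{SV12}, to which you explicitly defer. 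Since the paper defers the entire lemma to the same sources, your write-up is, if anything, more informative than the paper's treatment, though it would not be self-contained without importing that twisted $L^2$ estimate.
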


The estimate \eqref{basic-est-a}~appeared in \cite{Ch91} for $n=1$ and in~\cite{De98} for $n\ge 2$, while the inequality \eqref{basic-est-b} can be found in~\cite{SV12}.

For  $z\in {{\C}}^n$, write $k_{ z}(\cdot)=\frac{K(\cdot,z)}{\sqrt{K(z,z)}}$ for the normalized Bergman kernel.  Then Lemma \ref{basic-est} implies that
\begin{equation}\label{nor-bergman}
 \fr 1 C e^{ \varphi(z)} \le  \|K(\cdot,z)\|_{p, \varphi}  \le C  e^{ \varphi(z)} \, \textrm{ and }\,   \fr 1 C \le \|k_{z}\|_{p,\varphi}\le C, \, \textrm{ for }\, z\in \Cn,
 \end{equation}
and $\lim_{|z|\to \infty} k_z(\xi)=0$ uniformly in $\xi$ on compact subsets of $\Cn$.

\subsection{The Bergman projection}
For Fock spaces, we denote by $P$ the orthogonal projection of $L^2(\varphi)$ onto $F^2(\varphi)$, and refer to it as the Bergman projection. It is well known that $P$ can be represented as an integral operator
\begin{equation}\label{projection-a}
	Pf(z)=\int_{{\C}^n}K(z,w)f(w)e^{-2\varphi(w)}dv(w)
\end{equation}
for $z\in \C^n$, where $K (\cdot, \cdot)$ is the Bergman (reproducing) kernel of $F^2(\varphi)$.

As a consequence of Lemma~\ref{basic-est}, it follows that the Bergman projection $P$ is bounded on $L^p(\varphi)$ for $1\le p\le \infty$, and $P|_{F^p(\varphi)}= \mathrm{I}$ for $0< p\le \infty$; for further details, see Proposition~3.4 and Corollary~3.7 of~\cite{SV12}.

\subsection{Hankel operators}
To define Hankel operators with unbounded symbols, consider
$$
   \Gamma  =\left\{ \sum_{j=1}^N a_j K(\cdot, {z_j}): N\in \mathbb{N}, a_j\in  \mathbb{C}, z_j\in \Cn,  \textrm{ for } 1\le j\le N \right\},
$$
and the symbol class
\begin{equation}\label{e:symbol class}
     \mathcal S=\left \{f \textrm{ measurable on }\Cn: f g\in L^1(\varphi) \textrm{ for  } g \in \Gamma\right \}.
\end{equation}
Given $f \in \mathcal{S}$, the Hankel operator $H_f = (\mathrm I-P)M_f$ with symbol $f$ is well defined on $\Gamma$. According to Proposition~2.5 of~\cite{HV20}, for $0<p<\infty$, the set $\Gamma $ is dense in $F^p(\varphi)$, and hence the Hankel operator $H_f$ is densely defined on $F^p(\varphi)$.

\subsection{Lattices in $\C^n$}\label{lattices} Given   $r>0$,  a  sequence
$\{a_k\}_{k=1}^{\infty}$ in ${{\C}}^n$  is called  an $r$-lattice if the balls $\left\{B(a_k, r)\right\}_{k=1}^{\infty}$ cover ${{\C}}^n$ and $\left\{B\left(a_k, \frac{r}{2\sqrt{n}}\right)\right\}_{k=1}^{\infty}$ are pairwise disjoint. A typical model of an $r$-lattice is the sequence
\begin{equation}
	\left \{\frac{r}{\sqrt n}(m_1+k_1\im, m_2+k_2\im,\cdots, m_n+k_n\im)\in \Cn : m_j, k_j\in \mathbb{Z}, j=1, 2, \cdots, n \right\}.
\end{equation}

Notice that there exists an
integer $N$ depending only on the dimension of ${\Cn}$ such that, for any $r$-lattice $\{a_k\}_{k=1}^{\infty}$,
\begin{equation}\label{lattice}
1 \leq \sum\limits_{k=1}^{\infty}\chi_{B(a_k,2r)}(z) \leq N
\end{equation}
for $z\in\C^n$, where $\chi_E$ is the characteristic function of $E\subset \Cn$. These well known facts are explained in~\cite{Zh12} when $n=1$ and they can be easily generalized to any $n\in\N$.

\subsection{Fock Carleson measures} In the theory of Bergman spaces, Carleson measures provide an essential tool for treating various problems, especially in connection with bounded operators, functions of bounded mean oscillation, and their applications; see, e.g.~\cite{Zh05}. In Fock spaces, Carleson measures play a similar role---see~\cite{Zh12} for the Fock spaces $F^p_\alpha$. Carleson measures for Fock-Sobolev spaces were described in~\cite{CZ12}. In~\cite{SV12}, Carleson measures for generalized Fock spaces (which include the weights considered in the present work) were used to study bounded and compact Toeplitz operators. Finally, their generalization to $(p,q)$-Fock Carleson measures was carried out in~\cite{HL14}, which is indispensable to the study of operators between distinct Banach spaces and will be applied to analyze Hankel operators acting from $F^p(\varphi)$ to $L^q(\varphi)$ in our work.

We recall the basic theory of these measures. Let $0<p, q<\infty$ and let  $\mu\geq 0$ be a positive Borel measure on $\Cn$. We
call  $\mu$   a $(p, q)$\textit{-Fock Carleson measure}  if the embedding
 $\mathrm{I}: F^{p}(\varphi)\rightarrow L^{q}(\Cn, e^{-q\varphi}d\mu)$ is
bounded. Further, the measure $\mu$ is referred to as a vanishing $(p,q)$-Fock Carleson measure if in addition
$$
	\lim\limits_{j\rightarrow\infty}\int_{{{\C}}^n}\left|f_j(z)e^{-\varphi(z)}\right|^q d\mu(z)=0
$$
whenever $\{f_j\}_{j=1}^{\infty}$ is  bounded   in $F^{p}(\varphi)$ and
converges to $0$ uniformly on  any compact subset of ${{\C}}^n$ as
$j\to \infty$. Fock Carleson measures have been completely characterized in~\cite{HL14} and we only add the following simple result, which is trivial for Banach spaces and can be easily proved in the other cases.

\begin{proposition}\label{F-C-cpt}
Let $0<p,q<\infty$ and $\mu$ be a positive Borel measure on $\C^n$. Then $\mu$ is a vanishing $(p,q)$-Fock Carleson measure if and only if the inclusion map $\mathrm{I}$ is compact from $F^p(\varphi) \to L^q(\C^n, d\mu)$.
\end{proposition}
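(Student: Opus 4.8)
The plan is to reduce both implications to a single elementary fact about $F^p(\varphi)$: \emph{bounded subsets are normal families}. Indeed, by Lemma~\ref{submean-value} one has $|f(z)e^{-\varphi(z)}|\le C\|f\|_{p,\varphi}$ for all $z$, so if $\{f_j\}$ is bounded in $F^p(\varphi)$ then it is uniformly bounded on every compact subset of $\Cn$ (where $\varphi$ is bounded), and Montel's theorem produces a subsequence $f_{j_k}$ converging, uniformly on compact sets, to some $f\in H(\Cn)$; Fatou's lemma applied to $|f_{j_k}e^{-\varphi}|^p$ gives $f\in F^p(\varphi)$ with $\|f\|_{p,\varphi}\le\liminf_k\|f_{j_k}\|_{p,\varphi}$, so in particular $\{f_{j_k}-f\}$ stays bounded in $F^p(\varphi)$ and tends to $0$ locally uniformly. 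Throughout I would use that for $0<q<\infty$ the target space $L^q(\Cn,e^{-q\varphi}d\mu)$ (the one appearing in the definition of a $(p,q)$-Fock Carleson measure) is a complete metric space in which convergence forces $\mu$-a.e.\ convergence along a subsequence, and that ``$\mathrm I$ is compact'' means that $\mathrm I$ sends bounded sequences to sequences with a convergent subsequence.

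First I would prove that a vanishing $(p,q)$-Fock Carleson measure yields a compact inclusion. Given $\{f_j\}$ bounded in $F^p(\varphi)$, extract as above a subsequence $f_{j_k}\to f$ locally uniformly with $f\in F^p(\varphi)$. Then $g_k:=f_{j_k}-f$ is bounded in $F^p(\varphi)$ and converges to $0$ uniformly on compact sets, so the vanishing Carleson hypothesis gives $\int_{\Cn}|g_k(z)e^{-\varphi(z)}|^q\,d\mu(z)\to 0$, i.e.\ $\mathrm I f_{j_k}\to\mathrm I f$ in $L^q(\Cn,e^{-q\varphi}d\mu)$. Hence every bounded sequence in $F^p(\varphi)$ has an image subsequence that converges, so $\mathrm I$ is compact.

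For the converse, suppose $\mathrm I$ is compact; being compact it is bounded, so $\mu$ is already a $(p,q)$-Fock Carleson measure, and it remains to verify the vanishing condition. Let $\{f_j\}$ be bounded in $F^p(\varphi)$ with $f_j\to 0$ uniformly on compact subsets of $\Cn$, and suppose towards a contradiction that, after passing to a subsequence, $\|\mathrm I f_j\|_{L^q(e^{-q\varphi}d\mu)}\ge\delta>0$ for all $j$. By compactness a further subsequence $\mathrm I f_{j_k}$ converges in $L^q(\Cn,e^{-q\varphi}d\mu)$ to some $g$, and then a further subsequence converges to $g$ $\mu$-a.e. But $f_{j_k}\to 0$ at every point of $\Cn$, hence $f_{j_k}e^{-\varphi}\to 0$ pointwise, so $g=0$ $\mu$-a.e.; this forces $\|\mathrm I f_{j_k}\|_{L^q(e^{-q\varphi}d\mu)}\to 0$, contradicting $\|\mathrm I f_{j_k}\|\ge\delta$. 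Therefore $\mu$ is a vanishing $(p,q)$-Fock Carleson measure.

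The only place where care is needed---and this is why the statement is ``trivial for Banach spaces'' but not literally a line otherwise---is the regime $0<q<1$, where $L^q(\Cn,e^{-q\varphi}d\mu)$ is merely an $F$-space: one must phrase ``compact operator'' and ``convergence'' via the metric $d(g,h)=\int|g-h|^q\,d\mu$, and check that $d(g_k,g)\to 0$ still extracts a $\mu$-a.e.\ convergent subsequence even when $\mu(\Cn)=\infty$. This follows from Chebyshev's inequality $\mu\{|g_k-g|>\varepsilon\}\le\varepsilon^{-q}\int|g_k-g|^q\,d\mu$ together with a Borel--Cantelli argument, and it is the same mechanism for all $0<q<\infty$. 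I do not expect any genuine obstacle beyond this bookkeeping; everything substantive is packaged in the normal-family property of $F^p(\varphi)$ recorded at the start.
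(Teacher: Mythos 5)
Your argument is correct and is precisely the standard normal-family/Montel argument that the paper itself alludes to but omits (``It is not difficult to show\dots We leave out the details''), including the right handling of the $F$-space case $0<q<1$ via Chebyshev and a $\mu$-a.e.\ convergent subsequence. Nothing further is needed.
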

\begin{proof}
It is not difficult to show that the image of the unit ball of $F^p(\varphi)$ under the inclusion is relatively compact in $L^q(\C^n, e^{q\varphi}d\mu)$. We leave out the details.
\end{proof}

\subsection{Differential forms and an auxiliary integral operator} As in \cite{Kr82}, given two nonnegative integers $ s, t\le n $, we write
\begin{equation}\label{p-q-form}
   \omega= \sum_{|\alpha|=s, |\beta|=t} \omega_{\alpha, \beta} dz^\alpha\wedge
     d{\overline z}^\beta
\end{equation}
for a differential form of type $(s, t)$. We denote by $L_{s, t}$ the family of all $(s, t) $-forms $\omega$ as in (\ref{p-q-form}) with coefficients $\omega_{\alpha, \beta}$ measurable on  $\Cn$ and set
\begin{equation}\label{norm of differential form}
  	|\omega|= \sum_{|\alpha|=s, |\beta|=t} |\omega_{\alpha, \beta}|\ {\rm and}\
	\|\omega\|_{p, \varphi}= \left \||\omega|\right\|_{p, \varphi}.
\end{equation}

Given a weight function $\varphi$ satisfying (\ref{weights}),  we define   an integral operator $A_\varphi$  as
\begin{multline}\label{A-operator}
      A_\varphi(\omega )(z)= \int_{\Cn} e^{\langle 2 \partial \varphi, z-\xi  \rangle }\\
      \times\sum_{j<n } \omega(\xi)\wedge
 \fr{\partial|\xi-z|^2\wedge (2 {\overline \partial} \partial\varphi(\xi))^j \wedge ( {\overline \partial} \partial |\xi-z|^2)^{n-1-j} }{j!|\xi-z|^{2n-2j}}\,
\end{multline}
 for  $\omega\in L_{0, 1}$, where $\langle  \partial \varphi(\xi), z-\xi  \rangle  = \sum_{j=1}^n \fr {\partial \varphi}{{\partial}{ \xi}_j}(\xi)(z_j-\xi_j)$   as denoted on page~92 in~\cite{BA82}.

For $(s_1, t_1)$-form $\omega_A$ and $(s_2, t_2)$-form $\omega_B$ with $s_1+s_2\le n, t_1+t_2\le n$,  it is easy to verify that
 $ \left|\omega_A \wedge\omega_B  \right|\le \left|\omega_A\right| \left|\omega_B\right|$. Therefore, for the $(n, n)$-form inside the integral of the right hand side of (\ref{A-operator}),  we obtain
$$
	\left|  \omega(\xi)\wedge\frac{\partial|\xi-z|^2\wedge (2 {\overline \partial} \partial\varphi)^j \wedge ( {\overline \partial} \partial |\xi-z|^2)^{n-1-j} }{j!|\xi-z|^{2n-2j}}\right| 
	\le C \fr {|\omega(\xi)|}{|\xi-z|^{2n-2j-1}}
$$
because $\mathrm i  {\partial \overline \partial}\, \varphi(\xi) \simeq \mathrm i  {\partial \overline \partial}\, |\xi|^2$.

Recall that
$$
   \Gamma  =\left\{ \sum_{j=1}^N a_j K_{z_j}: N\in \mathbb{N}, a_j\in  \mathbb{C}, z_j\in \Cn  \textrm{ for } 1\le j\le N \right\}
$$
is dense in $F^p(\varphi)$ for all $0<p<\infty$.

\begin{lemma} \label{hankel-and-d-bar}
   Suppose  $1\le p \le \infty$.
\begin{itemize}
\item[(A)] There is a constant $C$ such that $\| A_\varphi(\omega)\|_{p, \varphi}\le C \left\|\omega\right \|_{p, \varphi}$ for $\omega\in L_{0, 1}$.

\item[(B)]
For $g\in \Gamma$ and  $f\in  C^2(\Cn)$ satisfying  $|\overline {\partial} f|\in L^p $, it holds that ${\overline \partial}  A_\varphi(g\overline \partial f)= g{\overline \partial}f$.
\end{itemize}
\end{lemma}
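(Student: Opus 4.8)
For part~(A), the pointwise bound on the $(n,n)$-form in the integrand, displayed just before the statement, already does most of the work; what remains is to absorb the exponential factor $e^{\langle 2\partial\varphi(\xi),z-\xi\rangle}$ into the weights $e^{-\varphi}$. The plan is first to establish the elementary estimate
\[
	\bigl|e^{\langle 2\partial\varphi(\xi),\,z-\xi\rangle}\bigr|\,e^{-\varphi(z)+\varphi(\xi)}\le e^{-\frac m2|z-\xi|^2}\qquad(z,\xi\in\C^n),
\]
which follows by writing $\mathrm{Re}\,\langle 2\partial\varphi(\xi),z-\xi\rangle=\nabla\varphi(\xi)\cdot(z-\xi)$ (real inner product under $\C^n=\mathbb R^{2n}$) and applying Taylor's formula together with the lower bound $m\mathrm E\le\mathrm{Hess}_{\mathbb R}\varphi$ from~\eqref{weights}. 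Combining this with the integrand bound gives, for any $1\le p\le\infty$,
\[
	e^{-\varphi(z)}\,\bigl|A_\varphi(\omega)(z)\bigr|\le C\int_{\C^n}N(z-\xi)\,|\omega(\xi)|e^{-\varphi(\xi)}\,dv(\xi),\qquad N(w)=\sum_{j<n}\frac{e^{-\frac m2|w|^2}}{|w|^{2n-2j-1}}.
\]
Since $2n-2j-1<2n$ for every $j\ge 0$, the singularity of $N$ at the origin is locally integrable on $\mathbb R^{2n}$, and the Gaussian controls the tail, so $N\in L^1(\C^n)$. Part~(A) then follows from Young's inequality $\|N*h\|_{L^p}\le\|N\|_{L^1}\|h\|_{L^p}$ (valid for all $1\le p\le\infty$) applied to $h=|\omega|e^{-\varphi}$.

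For part~(B), I would first check that $g\,\overline\partial f$ is a $\overline\partial$-closed $(0,1)$-form: since $g\in\Gamma\subset H(\C^n)$ we have $\overline\partial g=0$, and $f\in C^2$ gives $\overline\partial\overline\partial f=0$, hence $\overline\partial(g\,\overline\partial f)=\overline\partial g\wedge\overline\partial f+g\,\overline\partial\overline\partial f=0$. Next, because $g\in\Gamma$ the function $ge^{-\varphi}$ is bounded on $\C^n$ by Lemma~\ref{basic-est}, so $|g\,\overline\partial f|e^{-\varphi}\le C|\overline\partial f|\in L^p$; thus $g\,\overline\partial f\in L^p(\varphi)$ and, by part~(A), $A_\varphi(g\,\overline\partial f)$ is given by an absolutely convergent integral and lies in $L^p(\varphi)$. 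The operator $A_\varphi$ is precisely the Berndtsson--Andersson solution operator for $\overline\partial$ associated with the weight $2\varphi$, and since $dd^c(2\varphi)\simeq\omega_0$ is strictly positive, the Berndtsson--Andersson formula of~\cite{BA82} (see also~\cite{BO95,SV12}) applies to the closed, absolutely integrable form $g\,\overline\partial f$ and yields $\overline\partial A_\varphi(g\,\overline\partial f)=g\,\overline\partial f$.

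The delicate point is this last step: one must justify the Berndtsson--Andersson identity on all of $\C^n$ rather than on a bounded domain, i.e.\ that $\overline\partial$ may be taken inside the integral and that nothing survives from infinity. This is exactly where the hypotheses enter: the Gaussian factor $e^{-\frac m2|z-\xi|^2}$ from part~(A) together with $|\overline\partial f|\in L^p$ controls the tails, while $f\in C^2$ and the holomorphy of $g$ ensure that $g\,\overline\partial f$ is both closed and regular enough for the Cauchy--Fantappi\`e/Stokes computation underlying the Berndtsson--Andersson kernel to go through; the strict lower bound $m\mathrm E\le\mathrm{Hess}_{\mathbb R}\varphi$ is what makes the holomorphic weight $e^{\langle 2\partial\varphi(\xi),z-\xi\rangle}$ effective. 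I would make this rigorous by testing $\overline\partial A_\varphi(g\,\overline\partial f)$ against compactly supported $(n,n-1)$-forms, applying Fubini, and invoking the kernel identities of~\cite{BA82}, with a limiting/exhaustion argument over balls $B(0,R)$ to remove the contribution at infinity; I would quote~\cite{BA82} for the kernel identity itself rather than reproduce the computation.
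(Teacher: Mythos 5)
Your proposal is correct and follows essentially the same route as the paper: for (A), the Taylor-expansion estimate $\varphi(z)-\varphi(\xi)-\mathrm{Re}\,\langle 2\partial\varphi(\xi),z-\xi\rangle\ge c\,|z-\xi|^2$ reduces everything to a convolution against an integrable Gaussian-damped kernel with singularities $|w|^{-(2n-2j-1)}$, $2n-2j-1<2n$ (the paper phrases the $L^p$-boundedness of this convolution as ``interpolation'' between $L^1$ and $L^\infty$, which is the same content as your appeal to Young's inequality); for (B), one checks absolute convergence of the defining integral using the boundedness of $ge^{-\varphi}$ for $g\in\Gamma$ and $|\overline\partial f|\in L^p$, and then quotes Proposition~10 of~\cite{BA82} for the solution identity, exactly as you propose.
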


\begin{proof}
Let $z\in \C^n$. By \eqref{weights}, using Taylor expansion of $\varphi$ at $\xi$, we get
$$
	\varphi(z)-\varphi(\xi)\ge
	2 \mathrm{Re} \sum_{j=1}^n \fr {\partial \varphi(\xi)}{\partial \xi_j}(z_j-\xi_j) + m|z-\xi|^2.
$$
Then \eqref{A-operator} gives
\begin{multline}\label{d-bar-s-V1}
	\left| A_\varphi (\omega)(z) e^{-\varphi(z)}\right|\\
	\le C\int_{\Cn}\left |\omega (\xi)\right| e^{-\varphi(\xi)} \left\{\fr 1{|\xi-z|} +\fr 1{|\xi-z|^{2n-1}} \right\}e^{-m|\xi-z|^2} dv(\xi).
\end{multline}
For $l<2n$ fixed, define another integral operator $\mathcal A_l$ as
$$
	{\mathcal A_l} : h \mapsto \int_{\Cn} h(\xi) \fr {e^{-m|\xi-z|^2}} {|\xi-z|^ l} dv(\xi).
$$
It is easy to verify, by interpolation, that   $\mathcal A_l$ is bounded on $L^p$ for $1\le p\le \infty$. Therefore,
\begin{align*}
	\|A_\varphi (\omega)\|_{p, \varphi}
	&\le C \|\left (\mathcal A_1 + \mathcal A_{2n-1}\right) (|\omega|e^{-\varphi } )\|_{L^p}\\
	&\le C\left(\|{\mathcal A_1}\|_{L^p\to L^p} + {\mathcal A_{2n-1}}\|_{L^p\to L^p}\right)\left\|\omega\right \|_{p, \varphi},
\end{align*}
which completes the proof of part (A).

Notice that the convexity assumption in \eqref{weights} yields
$dd^c \varphi \simeq \omega_0$, which in turn means that $| {\partial \overline \partial} \varphi(\xi)|\simeq 1$. We use $p'$ to denote the conjugate of $p$, $\fr 1p +\fr 1{p'}=1$. Now, for $f\in C^2(\Cn)$ satisfying $|\overline {\partial} f|\in L^p $, and  $z, z_0\in \Cn$, we have
\begin{align*}
  &\int_{\Cn} |K(\xi, {z_0})\,  {\overline \partial}f(\xi)|\sum_{j=0}^{n-1}
      \fr {e^{-\varphi(\xi)} |{\overline \partial} \partial\varphi(\xi)|^j}{|\xi-z|^{2n-2j-1} }dv(\xi)\\
  &\quad\le C \Bigg\{\sup_{\xi\in B(z, 1)} |K(\xi, {z_0})\, {\overline
      \partial}f(\xi)e^{-\varphi(\xi)}|\\
      &\qquad\qquad+ \int_{\Cn\setminus B(z, 1)} |K(\xi, {z_0})\, {\overline \partial}f(\xi)|
      {e^{-\varphi(\xi)} }dv(\xi)\Bigg\}\\
   &\quad\le C e^{\varphi(z_0)}\left\{ \sup_{\xi\in B(z, 1)} | {\overline
      \partial}f(\xi)|  + \|{\overline \partial }f\|_{L^p}  \|K(\cdot, z_0) \|_{{p'}, \varphi} \right\}
	<\infty .
\end{align*}
Hence, for $g\in \Gamma$ and $z\in \Cn$, it holds that
$$
   \int_{\Cn} |g(\xi) {\overline \partial}f(\xi)|\sum_{j=0}^{n-1}
      \fr {e^{-\varphi(\xi)} |{\overline \partial} \partial\varphi(\xi)|^j}{ |\xi-z|^{2n-2j-1} }dv(\xi)<\infty.
$$
 From Proposition 10 of~\cite{BA82}, we get (B) (pay attention to the mistake in the last line of Proposition 10 in \cite{BA82} where $f$ is left out on the right hand side). The proof is completed.
\end{proof}

\begin{corollary}  \label{d-bar-hankel}
   Suppose $f\in  {\mathcal S}\cap C^1(\Cn)$ and  $|\overline {\partial} f|\in L^s $ with some $1\le s\le \infty$. For $g\in \Gamma$, it holds that
\begin{equation}\label{d-bar-s}
     H_f(g)= A_{\varphi} (g\overline \partial f)-P(A_{\varphi} (g\overline \partial f)).
\end{equation}
\end{corollary}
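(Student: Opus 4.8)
The plan is to deduce Corollary~\ref{d-bar-hankel} directly from Lemma~\ref{hankel-and-d-bar}, treating it as essentially a bookkeeping statement about the canonical solution operator to the $\overline\partial$-equation. Fix $g\in\Gamma$ and $f\in\mathcal S\cap C^1(\Cn)$ with $|\overline\partial f|\in L^s$. First I would record that $g\overline\partial f\in L_{0,1}$ with $\|g\overline\partial f\|_{s,\varphi}<\infty$: indeed $g$ is a finite linear combination of reproducing kernels, so by the pointwise bound \eqref{basic-est-a} it decays like $e^{\varphi(\xi)}e^{-\theta|\xi-\xi_0|}$, hence $g e^{-\varphi}\in L^{s'}\cap L^\infty$ for every exponent, and multiplying by $|\overline\partial f|\in L^s$ keeps us in $L^s(\varphi)$ (for $s=\infty$ one uses instead that $ge^{-\varphi}\in L^1$). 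This makes $A_\varphi(g\overline\partial f)$ well defined and, by part (A) of Lemma~\ref{hankel-and-d-bar} applied with $p=s$, an element of $L^s(\varphi)$; in particular it lies in $L^1_{\mathrm{loc}}$ so that $PA_\varphi(g\overline\partial f)$ makes sense via \eqref{projection-a}. Actually, to make $P$ act legitimately I would prefer to note $A_\varphi(g\overline\partial f)\in L^q(\varphi)$ for some $q$ with $1\le q\le\infty$ on which $P$ is bounded (if $s<1$ this never happens, but the hypothesis is $s\ge 1$, so we are fine, and $P$ is bounded on $L^q(\varphi)$ for $1\le q\le\infty$ by the remarks following \eqref{projection-a}).

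Next comes the core identity. Set $u=A_\varphi(g\overline\partial f)$. By part (B) of Lemma~\ref{hankel-and-d-bar} (again with $p=s$, using that $f\in C^2$ — here I would note $f\in C^1$ with $\overline\partial f\in L^s$ suffices for the cited Proposition~10 of \cite{BA82}, or else simply restrict to the stated $C^2$ case and remark that the $C^1$ version follows by the same computation) we have $\overline\partial u=g\,\overline\partial f=\overline\partial(gf)$, since $g$ is holomorphic. Therefore $gf-u$ is a (globally defined) function annihilated by $\overline\partial$, i.e.\ $gf-u\in H(\Cn)$. Now $gf\in L^1(\varphi)$ because $f\in\mathcal S$ and $g\in\Gamma$ (that is precisely the defining property \eqref{e:symbol class} of $\mathcal S$), and $u\in L^q(\varphi)\subseteq L^1_{\mathrm{loc}}$; I would like to say $gf-u\in F^1(\varphi)$, but I only need that $gf-u$ is holomorphic and that $P$ reproduces it. To get the latter cleanly: $P(gf-u)$ is well defined since $gf\in L^1(\varphi)$ makes $Pgf$ meaningful (the kernel times $gf$ is integrable by \eqref{basic-est-a}) and $Pu$ is meaningful by the previous paragraph; and a holomorphic function which lies in $L^1(\varphi)+L^q(\varphi)$ is fixed by $P$ — this is the reproducing property, valid because $P|_{F^p(\varphi)}=\mathrm I$ for all $0<p\le\infty$ and $gf-u$ sits in such a space (if necessary I would invoke Fubini together with the kernel decay to justify $P(gf-u)=gf-u$ directly). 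Hence
\[
  gf-u=P(gf-u)=P(gf)-P(u)=T_f g-P(u).
\]

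Rearranging, $u-P(u)=gf-T_f g=(I-P)(gf)=H_f g$, which is exactly \eqref{d-bar-s}:
\[
  H_f(g)=A_\varphi(g\overline\partial f)-P\big(A_\varphi(g\overline\partial f)\big).
\]
I expect the only genuinely delicate point to be the justification that $P$ reproduces the holomorphic function $gf-u$ — i.e.\ that one may legitimately apply the integral formula \eqref{projection-a} to it and that it returns the function itself. This requires knowing $gf-u$ lands in one of the spaces $F^p(\varphi)$ on which $P=\mathrm I$; the membership $gf\in L^1(\varphi)$ and $u\in L^q(\varphi)$ give $gf-u\in L^1(\varphi)+L^q(\varphi)$, and since it is holomorphic, a submean-value estimate (Lemma~\ref{submean-value}) together with the kernel bounds upgrades local integrability to the reproducing identity. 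Everything else — the $L^s$-membership of $g\overline\partial f$, the applicability of parts (A) and (B) of Lemma~\ref{hankel-and-d-bar}, and the purely algebraic manipulation with $I-P$ — is routine. I would write the argument in three short steps: (1) $g\overline\partial f\in L^s(\varphi)$ and $u:=A_\varphi(g\overline\partial f)$ is well defined with $\overline\partial u=\overline\partial(gf)$; (2) $gf-u$ is holomorphic and reproduced by $P$; (3) subtract to obtain \eqref{d-bar-s}.
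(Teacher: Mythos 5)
Your proposal is correct and follows essentially the same route as the paper: apply Lemma~\ref{hankel-and-d-bar} to $u=A_\varphi(g\overline\partial f)$, observe that $gf-u$ is $\overline\partial$-closed, and invoke the reproducing property of $P$. The one simplification the paper makes, which dissolves the ``delicate point'' you flag at the end, is to use H\"older at the outset, $\|g\overline\partial f\|_{1,\varphi}\le\|g\|_{s',\varphi}\|\overline\partial f\|_{L^s}<\infty$, and then apply Lemma~\ref{hankel-and-d-bar}(A) with $p=1$; this places $u$ in $L^1(\varphi)$, so $gf-u\in F^1(\varphi)$ and $P|_{F^1(\varphi)}=\mathrm{I}$ applies directly, with no need for the hybrid space $L^1(\varphi)+L^q(\varphi)$ or a sub-mean-value upgrade.
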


\begin{proof}
Given   $f\in  {\mathcal S}\cap C^1(\Cn)$  with  $|\overline {\partial} f|\in L^s $   and $g\in \Gamma$,  $\|g\overline \partial f\|_{1, \varphi} \le \|g \|_{s', \varphi} \| \overline \partial f\|_{L^s}<\infty$, where $s'$ is the conjugate of $s$. Lemma~\ref{hankel-and-d-bar} implies that $u=A_{\varphi} (g\overline \partial f)\in L^1(\varphi)$ and $\overline \partial u = g\overline \partial f$. Then $fg-u\in L^1(\varphi)$. Notice that
$
   {\overline \partial} \left( fg-u \right)=  g {\overline \partial}  f-  {\overline \partial}u=0
$, and so
$fg-u\in F^1(\varphi)$.
  Since $P|_{F^1_{\varphi}}={\mathrm{I}}$, we have
$$
 fg-u=  P\left(fg-u \right)= P(fg)- P(u).
$$
This shows that $H_f= u- P(u)$.
\end{proof}

\section{The space $\mathrm{IDA}$}\label{space-IDA}

In this section we introduce a new space to characterize boundedness and compactness of Hankel operators. The space $\IDA$ is related to the space of bounded mean oscillation $\BMO$ (see, e.g.~\cite{JN61, Zh12}), which has played an important role in many branches of analysis and their applications for decades. We find that $\IDA$ is also broad in scope and should have more applications in operator theory and related areas.

\subsection{Definitions and preliminary lemmas} Let  $0<q <\infty$ and $r>0$. For  $f\in L^q_{\mathrm{loc}}$ (the collection of  $q$-th locally Lebesgue integrable functions on $\Cn$), following Luecking in \cite{Lu92}, we define $G_{q,r}(f)$ as
\begin{equation}\label{G-q-r}
	G_{q,r}(f)(z)=\inf\left \{\left(\frac{1}{|B(z,r)|}\int_{B(z,r)}
	|f-h|^q dv \right)^{\frac{1}{q}}:h\in H(B(z, r))\right\}
\end{equation}
for $z\in \Cn$.

\begin{definition}\label{IDA def} Suppose $0< s\le \infty$ and $0<q< \infty$. The space $\mathrm{IDA}^{s, q}$ {\rm (}integral distance to holomorphic functions{\rm )} consists of all $f\in L^q_{\mathrm {loc}}$ such that
$$
	\|f\|_{\IDA^{s, q}} = \|G_{q, 1}(f)\|_{L^s}<\infty.
$$
The space $\IDA^{\infty, q} $ is also denoted by $\BDA^q$. The space  $\mathrm{VDA}^q$ consists of all $f\in \BDA^q$ such that
$$
	\lim_{z\to \infty} G_{q, 1}(f)(z)=0.
$$
\end{definition}

We will see in Section 6 that $\mathrm{IDA}^{s, q}$ is an extension of the space $\mathrm{IMO}^{s, q}$ introduced in \cite{HW18}.

Notice that the space $\BDA^2$ was first introduced in the context of the Bergman spaces of the unit disk by Luecking~\cite{Lu92}, who called it the space of functions with bounded distance to {\it analytic} functions (BDA).

\begin{remark}\label{proper-inclusions}
As is the case with the classical $\mathrm{BMO}^q$ and $\mathrm{VMO}^q$ spaces, we have
$$
	\BDA^{q_2} \subset \BDA^{q_1}\quad{\rm and}\quad \VDA^{q_2} \subset \VDA^{q_1}
$$
properly for $0<q_1<q_2<\infty$.
\end{remark}

Let $0<q<\infty$. For $z\in\C^n$, $f\in L^q( B(z, r), dv)$  and $r>0$, we define the $q$-th mean of $|f|$ over $B(z, r)$ by setting
$$
	M_{q, r}(f)(z)= \left(\frac{1}{|B(z,r)|}\int_{B(z,r)} |f|^q dv \right)^{\frac{1}{q}}.
$$
For $\omega \in L_{0, 1}$, we set $ M_{q, r}(\omega)(z)= M_{q, r}(|\omega|)(z)$.

\begin{lemma}\label{G-function}
 Suppose $0<q<\infty$. Then for  $f\in L^q _{\mathrm{loc}}$,  $z\in\C^n$ and $r>0$, there is some $h \in H(B(z, r))$ such that
\begin{equation}\label{h-z}
	M_{q, r}(f-h)(z)= G_{q, r}(f) (z)
\end{equation}
and
\begin{equation}\label{bounded-h}
	\sup_{w\in B(z, r/2)} |h (w)| \le C\|f\|_{L^q(B(z, r), dv)}
\end{equation}
where the constant $C$ is independent of $f$ and $r$.
\end{lemma}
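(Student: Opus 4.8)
The plan is to produce the extremal $h$ by a normal-families/compactness argument and then extract the sup bound from a sub-mean-value inequality for holomorphic functions. First I would reduce to the case $z=0$, $r=1$ by translation (note that $B(z,r)$ is just a Euclidean ball, so translation is harmless here, even though we lack Weyl operators globally), so that we are minimizing $\|f-h\|_{L^q(B(0,1))}$ over $h\in H(B(0,1))$. Pick a minimizing sequence $h_j\in H(B(0,1))$ with $M_{q,1}(f-h_j)(0)\to G_{q,1}(f)(0)$. Since $\|h_j\|_{L^q(B(0,1))}\le \|f-h_j\|_{L^q(B(0,1))}+\|f\|_{L^q(B(0,1))}$ is bounded, the sequence $\{h_j\}$ is bounded in $L^q$ of the unit ball; by the standard sub-mean-value inequality for $|h_j|^q$ over balls (the Euclidean analogue, which is elementary, or Lemma~\ref{submean-value} with $\varphi\equiv 0$ on the ball), the $h_j$ are uniformly bounded on each compact subset of $B(0,1)$. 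Montel's theorem then gives a subsequence converging locally uniformly to some $h\in H(B(0,1))$.

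Next I would check that this limit $h$ is actually extremal. For any $\rho<1$, local uniform convergence on $\overline{B(0,\rho)}$ gives $\|f-h\|_{L^q(B(0,\rho))}=\lim_j\|f-h_j\|_{L^q(B(0,\rho))}\le \limsup_j \|f-h_j\|_{L^q(B(0,1))} = G_{q,1}(f)(0)\cdot|B(0,1)|^{1/q}$; letting $\rho\uparrow 1$ via monotone convergence yields $M_{q,1}(f-h)(0)\le G_{q,1}(f)(0)$, and the reverse inequality is immediate from the definition of the infimum. This gives~\eqref{h-z}. (If $q<1$ one should be slightly careful: $\|\cdot\|_{L^q}$ is only a quasinorm, but the triangle-type inequality $\|a+b\|_{L^q}^q\le\|a\|_{L^q}^q+\|b\|_{L^q}^q$ and lower semicontinuity of the $L^q$-"norm" under a.e.\ convergence still do the job, so the argument goes through verbatim after replacing norms by their $q$-th powers.)

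Finally, for~\eqref{bounded-h}: since $h\in H(B(0,1))$, the sub-mean-value property for $|h|^q$ (again Lemma~\ref{submean-value} specialized to the trivial weight, or a direct elementary estimate) gives, for $w\in B(0,1/2)$,
\[
	|h(w)|^q \le C\int_{B(w,1/2)} |h|^q\,dv \le C\int_{B(0,1)} |h|^q\,dv,
\]
and then $\|h\|_{L^q(B(0,1))} \le \|f-h\|_{L^q(B(0,1))} + \|f\|_{L^q(B(0,1))} \le 2\|f\|_{L^q(B(0,1))}$ because $M_{q,1}(f-h)(0)=G_{q,1}(f)(0)\le M_{q,1}(f)(0)$ (take $h\equiv0$ in the infimum). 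Combining, $\sup_{w\in B(0,1/2)}|h(w)|\le C\|f\|_{L^q(B(0,1))}$, and undoing the translation gives~\eqref{bounded-h} with a constant depending only on $n$, $q$ (and, if one does not rescale, on $r$; but the statement claims independence of $r$, which follows since the Euclidean dilation $w\mapsto rw$ converts $B(0,1)$ to $B(0,r)$ and scales both sides of the inequality the same way — this rescaling step is why I would carry out the normalization $r=1$ at the very beginning). The main obstacle, such as it is, is the $q<1$ bookkeeping: making sure every "triangle inequality" and "lower semicontinuity" step is phrased in terms of $q$-th powers so that it remains valid in the quasi-Banach range; everything else is a routine normal-families argument.
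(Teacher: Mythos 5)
Your proof is correct and follows essentially the same route as the paper's: a minimizing sequence, normality via the sub-mean-value property of $|h_j|^q$, passage to the limit (the paper applies Fatou's lemma directly on $B(z,r)$, where you use locally uniform convergence on $B(0,\rho)$ plus monotone convergence in $\rho$), and the sub-mean-value property again for \eqref{bounded-h}. One small caveat: the dilation $w\mapsto rw$ does not ``scale both sides the same way,'' since the left-hand side of \eqref{bounded-h} is dilation-invariant while $\|f\|_{L^q(B(z,r),dv)}$ picks up a Jacobian factor $r^{2n/q}$; what your argument (and equally the paper's) actually yields is $\sup_{w\in B(z,r/2)}|h(w)|\le C\,M_{q,r}(f)(z)$ with $C$ independent of $r$, i.e.\ the normalized-mean form, which is the version used in the later applications.
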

\begin{proof}
Let $f\in L^q _{\mathrm{loc}}$, $z\in\C^n$ and $r>0$. Taking $h=0$ in the integrand   of (\ref{G-q-r}), we get
$$
	G_{q, r}(f)(z)\le M_{q, r}(f)(z)<\infty.
$$
Then for $ j=1, 2,\ldots$, we can pick $h_j\in H(B(z, r))$ such that
\begin{equation}\label{h-j-function}
	M_{q, r}(f-h_j)(z) \to G_{q, r}(f)(z)
\end{equation}
 as $j\to \infty$.
Hence, for $j$ sufficiently large,
\begin{equation}\label{h-z-q}
	M_{q, r}(h_j)(z)
	\le C \left  \{ M_{q, r}(f-h_j)(z)+ M_{q, r}(f)(z) \right\}
	\le C M_{q, r}(f)(z).
\end{equation}
This shows that $\{h_j\}_{j=1}^\infty$ is a normal family. Thus, we can find a subsequence $\{h_{j_k}\}_{k=1}^\infty$ and a function $h\in H(B(z, r))$ so that $\lim_{k\to \infty} h_{j_k}(w)\to h (w)$ for $w\in B(z, r)$.
By (\ref{h-j-function}),  applying Fatou's lemma, we have
$$
	G_{q, r}(f)(z)\le   M_{q, r}(f-h )(z) 
	\le \liminf_{k\to \infty}  M_{q, r}(f-h_{j_k})(z) = G_{q, r}(f)(z),
$$
which proves (\ref{h-z}). It remains to note that, with the plurisubharmonicity of $|h|^q$, for $w\in B(z, r/2)$, we have
$$
	|h (w)|\le M_{q, r/2}(h)(w)\le C M_{q, r }(h )(z)\le C M_{q, r}(f)(z),
$$
which completes the proof.
\end{proof}

\smallskip

\begin{corollary}\label{G-comparison}
For $0<s<r$, there is a constant $C>0$ such that for $f\in L^q_{loc}$ and $w\in B(z, r-s)$, it holds that
\begin{equation}\label{G-comp-a}
	G_{q, s}(f)(w)\le M_{q, s}(f-h)(w)\le C G_{q, r}(f)(z),
\end{equation}
where $h$ is as in Lemma \ref{G-function}.
\end{corollary}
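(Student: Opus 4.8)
The plan is to reduce everything to the containment $B(w,s)\subseteq B(z,r)$, valid whenever $w\in B(z,r-s)$: indeed, if $|\zeta-w|<s$ then $|\zeta-z|\le|\zeta-w|+|w-z|<s+(r-s)=r$. Granting this, the left-hand inequality is immediate, since the function $h\in H(B(z,r))$ furnished by Lemma~\ref{G-function} restricts to an element of $H(B(w,s))$, and $G_{q,s}(f)(w)$ is by definition the infimum of $M_{q,s}(f-g)(w)$ over all $g\in H(B(w,s))$.

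For the right-hand inequality I would merely enlarge the domain of integration and compare volumes. Since $|f-h|^q\ge 0$ and $B(w,s)\subseteq B(z,r)$,
$$
	\big(M_{q,s}(f-h)(w)\big)^q\le\frac{1}{|B(w,s)|}\int_{B(z,r)}|f-h|^q\,dv=\Big(\frac{r}{s}\Big)^{2n}\big(M_{q,r}(f-h)(z)\big)^q,
$$
using $|B(z,r)|/|B(w,s)|=(r/s)^{2n}$. By the defining property \eqref{h-z} of $h$, the right-hand side equals $(r/s)^{2n}\big(G_{q,r}(f)(z)\big)^q$, so taking $q$-th roots yields the assertion with $C=(r/s)^{2n/q}$, a constant depending only on $n$, $q$, $r$ and $s$.

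There is no genuine obstacle here: this is a one-line monotonicity-of-the-integral argument together with the infimum definition of $G_{q,s}$. The only points requiring a modicum of care are verifying the ball inclusion $B(w,s)\subseteq B(z,r)$ and remembering that $h$ must be the specific near-extremal function produced by Lemma~\ref{G-function} for the ball $B(z,r)$, since it is precisely \eqref{h-z} that allows us to replace $M_{q,r}(f-h)(z)$ by $G_{q,r}(f)(z)$.
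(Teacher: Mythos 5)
Your proposal is correct and follows essentially the same route as the paper: the ball inclusion $B(w,s)\subseteq B(z,r)$ gives the left inequality from the infimum definition of $G_{q,s}$, and the right inequality is the volume-comparison estimate $M_{q,s}(f-h)(w)\le C\,M_{q,r}(f-h)(z)$ combined with the extremal property \eqref{h-z} of $h$. Your explicit constant $C=(r/s)^{2n/q}$ is a minor bonus; no issues.
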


\begin{proof}
For $0<s<r$ and  $w\in B(z, r-s)$, $B(w, s)\subset B(z, r)$. Then, the first estimate in (\ref{G-comp-a}) comes from the definition of $G_{q, s}(f)$, while \eqref{h-z} yields
$$
	M_{q, s}(f-h)(w)\le C M_{q, r}(f-h)(z) =C G_{q, r}(f)(z)
$$
which completes the proof.
\end{proof}

For $z\in \Cn$ and $r>0$, let
$$
      A^q(B(z, r), dv)=  L^q(B(z, r), dv)\cap H(B(z, r))
$$
be the $q$-th Bergman space over $B(z, r)$. Denote by $P_{z,r}$ the corresponding Bergman  projection induced by the Bergman kernel for $A^2(B(z, r), dv)$. It is well know that $P_{z, r}(f)$ is well defined for $f\in L^1\left((B(z, r), dv\right)$.

\begin{lemma}\label{Project}
Suppose $1\le q <\infty$ and $0<s<r$. There is a constant $C>0$ such that, for $f\in L^q _{\mathrm{loc}}$ and $w\in B(z, \frac {r-s}2)$,
\begin{equation}\label{P-z-r}
	G_{q, s}(f) (w) 
	\le M_{q, s}\left(f-P_{z, r}(f)\right)(w) 
	\le C G_{q, r}(f) (z) \,\, \textrm{ for }\,\, z\in \Cn.
\end{equation}
\end{lemma}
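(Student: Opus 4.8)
The plan is to deduce \eqref{P-z-r} from Corollary~\ref{G-comparison} by showing that the particular holomorphic function $P_{z,r}(f)$ does essentially as well as the optimal competitor $h$ from Lemma~\ref{G-function}. The left inequality in \eqref{P-z-r} is immediate from the definition of $G_{q,s}(f)(w)$, since $P_{z,r}(f)\in H(B(z,r))\subset H(B(w,s))$ whenever $B(w,s)\subset B(z,r)$, which holds for $w\in B(z,\frac{r-s}{2})\subset B(z,r-s)$. So the whole content is the right inequality, and the natural route is the triangle inequality
\[
	M_{q,s}\big(f-P_{z,r}(f)\big)(w) \le M_{q,s}(f-h)(w) + M_{q,s}\big(h-P_{z,r}(f)\big)(w),
\]
where $h$ is the near-optimal holomorphic function furnished by Lemma~\ref{G-function} on $B(z,r)$. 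By Corollary~\ref{G-comparison} (applied with the radius pair, say, $\frac{r+s}{2}$ and $r$, or simply $s$ and $r$ after shrinking), the first term on the right is $\le C\,G_{q,r}(f)(z)$, so it remains to control $M_{q,s}(h-P_{z,r}(f))(w)$.

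For the second term, the key point is that $P_{z,r}$ reproduces holomorphic functions: $P_{z,r}(h)=h$ on $B(z,r)$ since $h\in A^q(B(z,r),dv)\subset A^1(B(z,r),dv)$. Hence $h - P_{z,r}(f) = P_{z,r}(h-f)$, and therefore
\[
	M_{q,s}\big(h-P_{z,r}(f)\big)(w) = M_{q,s}\big(P_{z,r}(h-f)\big)(w).
\]
Now I would invoke boundedness of the Bergman projection $P_{z,r}$ on $L^q(B(z,r),dv)$ for $1\le q<\infty$ — this is where the hypothesis $q\ge 1$ enters — together with a subharmonicity/submean-value estimate for the holomorphic function $P_{z,r}(h-f)$ to pass from its $L^q$-average over the slightly smaller ball $B(w,s)$ (with $w$ well inside $B(z,r)$) to a constant times its $L^q$-norm over all of $B(z,r)$: schematically,
\[
	M_{q,s}\big(P_{z,r}(h-f)\big)(w) \le C\,\|P_{z,r}(h-f)\|_{L^q(B(z,r),dv)} \le C\,\|h-f\|_{L^q(B(z,r),dv)} = C\,M_{q,r}(f-h)(z).
\]
By \eqref{h-z} this last quantity equals $C\,G_{q,r}(f)(z)$, which closes the argument. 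A scaling remark is worth making explicit: the operator norm of $P_{z,r}$ on $L^q(B(z,r),dv)$ and the submean constant are translation-invariant in $z$ and depend only on $r$ (and $q$, $n$), by the affine change of variables $\zeta\mapsto z+r\zeta$ mapping $B(z,r)$ to the unit ball; so the constant $C$ is uniform in $z$ as claimed.

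The main obstacle — really the only subtle point — is the passage from a local $L^q$-average of $P_{z,r}(h-f)$ on an interior subball to its global $L^q$-norm on $B(z,r)$ while keeping the constant independent of $z$ and controlled in terms of $s$ and $r$ only. This needs the constraint $w\in B(z,\frac{r-s}{2})$ so that $B(w,s)$ is compactly contained in $B(z,r)$ with a definite gap, allowing the interior submean-value inequality for holomorphic functions (Lemma~\ref{submean-value}-type, but on the fixed domain $B(z,r)$ with the trivial weight) to apply with a uniform constant. One should also double-check that the radii fed into Corollary~\ref{G-comparison} are compatible with $w\in B(z,\frac{r-s}{2})\subset B(z,r-s)$, which they are. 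Everything else is the triangle inequality, reproduction of holomorphic functions by $P_{z,r}$, and $L^q$-boundedness of the Bergman projection on a ball for $q\ge 1$, all of which are standard.
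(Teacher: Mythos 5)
Your overall architecture matches the paper's: reduce to the right-hand inequality, use $P_{z,r}(h)=h$ for the near-optimal $h$ from Lemma~\ref{G-function} to write $f-P_{z,r}(f)=(f-h)-P_{z,r}(f-h)$ (equivalently your triangle-inequality split), and then control $P_{z,r}(f-h)$ on the interior ball by the boundedness of the local Bergman projection, with constants made uniform in $z$ and $r$ by translation and dilation. The left inequality and the term $M_{q,s}(f-h)(w)\le C\,G_{q,r}(f)(z)$ via Corollary~\ref{G-comparison} are handled correctly.

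However, there is a genuine gap at the endpoint $q=1$. You justify the step
$\|P_{z,r}(h-f)\|_{L^q(B(z,r),dv)}\le C\|h-f\|_{L^q(B(z,r),dv)}$ by ``boundedness of the Bergman projection $P_{z,r}$ on $L^q(B(z,r),dv)$ for $1\le q<\infty$,'' but the Bergman projection of the ball is \emph{not} bounded on $L^1$; Theorem~2.11 of \cite{Zh05} gives boundedness only for $1<q<\infty$, and the paper states this failure explicitly. The paper circumvents it by using instead the bound \eqref{B-projection-2}, namely that $P_{z,r}$ maps $L^1(B(z,r),dv)$ boundedly into the \emph{weighted} space $A^1\bigl(B(z,r),(r^2-|\cdot\,-z|^2)dv\bigr)$ (from Theorem~1.12 of \cite{Zh05} and Fubini); since the weight $r^2-|\xi-z|^2$ is bounded below on $B(z,(r+s)/2)\supset B(w,s)$, the unweighted $L^1$-average over $B(w,s)$ is still controlled --- this is exactly the chain \eqref{Projet-in-Br}. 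Your argument is complete as written for $1<q<\infty$, and for $q=1$ it can be repaired either by quoting the weighted estimate as the paper does, or by replacing the projection-norm step with a pointwise reproducing-kernel estimate $\sup_{B(w,s)}|P_{z,r}(h-f)|\le C\,r^{-2n}\|h-f\|_{L^1(B(z,r),dv)}$, valid because $B(w,s)$ is compactly contained in $B(z,r)$ with a gap depending only on $r-s$; but as it stands the cited tool does not exist for $q=1$.
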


\begin{proof}
We only need to prove the second inequality. Suppose  $1< q<\infty$. Notice that $P_{0, 1}$ is the standard Bergman  projection on the unit ball of $\Cn$. Theorem 2.11 of~\cite{Zh05} implies that
$$
	\|P_{0, 1}\|_{L^q(B(0, 1), dv) \to A^q(B(0, 1), dv)}<\infty.
$$
Now for   $r>0$ fixed and $f\in L^q((B(0, r), dv)$,  set $f_r(w)= f(rw)$. Then
$$
	\|f_r\|_{L^q(B(0, 1), dv)} = r^{-2n/q} \|f\|_{L^q(B(0, 1), dv)}.
$$
Furthermore, it is easy to verify that the operator   $f \mapsto P_{0, 1}(f_r)\left(\fr \cdot r\right)$ is self-adjoint and idempotent, and it maps $L^2((B(0, r), dv)$ onto $A^2((B(0, r), dv)$. Therefore,
   $$
       P_{0, r}(f)(z) = P_{0, 1}(f_r)\left(\fr zr\right) \ \textrm{ for } \ f\in L^q(B(0, r), dv),
$$
and hence
$$
    \|P_{0, r}\|_{L^q(B(0, r), dv) \to A^q(B(0, r), dv)} =\|P_{0, 1}\|_{L^q(B(0, 1), dv) \to A^q(B(0, 1), dv)}.
$$
Now for $z\in \Cn$ and $r>0$, using a suitable dilation, it follows that
 \begin{equation}\label{B-projection-1}
    \|P_{z, r}\|_{L^q(B(z, r), dv) \to A^q(B(z, r), dv)}=\|P_{0, 1}\|_{L^q(B(0, 1), dv) \to A^q(B(0, 1), dv)}<\infty.
 \end{equation}
Unfortunately,  $P_{z,r}$ is not  bounded on $L^1(B(z, r), dv)$, but with the same approach as above, by  Theorem 1.12 of~\cite{Zh05} and Fubini's theorem, we have
\begin{equation}\label{B-projection-2}
	\|P_{z,r}\|_{L^1(B(z, r), dv)\to  A^1(B(z, r), (r^2-|\cdot\, -z|^2)dv)}\le C
\end{equation}
for $z\in \Cn$ and $r>0$.

Choose $h$ as in Lemma \ref{G-function}. Then $h\in A^q(B(z, r), dv)$ because $f\in L^q_{\loc}$.   Thus, $P_{z,r}(h)=h$. Now for $w\in B(z, (r-s)/2)$ and $1\le q<\infty$,
\begin{equation}\label{Projet-in-Br}
\begin{split}
	&\left\{  \int_{B(w, s)}\left|f-P_{z,r}(f) \right|^q dv\right\}^{\frac1q}\\
	&\le  C \left\{  \int_{B(z,(r+s)/2)} \left|f-P_{z,r}(f) \right|^q dv\right\}^{\fr 1 q}\\
	&\le  C  \left\{  \int_{B(z, r)} \left|f(\xi)-P_{z,r}(f)(\xi) \right|^q (r^2-|\xi-z|^2) dv(\xi)\right\}^{\frac1q}\\
	&\le C\Bigg\{   \left[ \int_{B(z, r  )} \left|f-h \right|^q  dv\right]^{\fr 1 q}\\
	&\qquad\qquad+\left[  \int_{B(z, r  )} \left|P_{z,r} (f-h)(\xi) \right|^q (r^2-|\xi-z|^2) dv(\xi) \right]^{\fr 1 q}\Bigg\}\\
	&\le  C \left\{ \int_{B(z, r  )} \left|f-h \right|^q  dv\right\}^{\fr 1 q}.
\end{split}
\end{equation}
From this and Lemma \ref{G-function}, \eqref{P-z-r} follows.
\end{proof}

Given $t>0$, let $\{a_j\}_{j=1}^\infty$ be a $\fr t2$-lattice,  set $J_z  = \{j: z\in B(a_j, t)\}$ and denote by  $|J_z|$ the cardinal number of $J_z$.  By (\ref{lattice}), $|J_z|= \sum_{j=1}^\infty \chi_{B(a_j, t)}(z)\le N$. Choose a partition of unity $\{\psi_j\}_{j=1}^\infty$ subordinate to $\{B(a_j, t/2)\}$ such that
\begin{equation}\label{psi-1}
\begin{split}
	&\supp \psi_j\subset B(a_j, t/2),  \, \, \psi_j(z)\ge 0,  \,\,\sum_{j=1}^\infty \psi_j(z)=1,\\
	&|\overline \partial \psi_j(z)|\le C t^{-1}, \quad \sum_{j=1}^\infty \overline \partial \psi_j(z) =0.
\end{split}
\end{equation}
Given $f\in L^q_{\mathrm{loc}}$, for $j=1,2,\ldots$, pick $h_j\in H(B(a_j, t))$ as in Lemma \ref{G-function} so that
$$
	M_{q, t}(f-h_j)(a_j) = G_{q, t} (f)(a_j).
$$
Define
\begin{equation}\label{decomposition-a}
	f_1=\sum_{j=1}^\infty h_j \psi_j\ \textrm{ and }\  f_2= f-f_1.
\end{equation}
Notice that $f_1(z)$ is a finite sum for every $z\in \C^n$ and hence well defined because we have $\supp \psi_j\subset B(a_j,t/2)\subset B(a_j,t)$.

Inspired by a similar treatment on pages 254--255 of~\cite{Lu92}, using the partition of unity, we can prove the following estimate.

\begin{lemma}\label{basic-docomp}
Suppose $0< q<\infty$. For $f\in L^q_{\mathrm{loc}}$ and $t>0$, decomposing $f=f_1+f_2$ as in \eqref{decomposition-a}, we have $f_1\in C^2(\Cn)$  and
\begin{equation}\label{decomposition-b}
 \left|\overline{\partial}f_1(z)\right |+     M_{q, t/2}(\overline \partial f_1)(z)  +   M_{q, t/2}(f_2 )(z)\le C
   G_{q,2t}(f)(z)
\end{equation}
for $ z\in \Cn$, where the constant $C$ is independent of $f$.
\end{lemma}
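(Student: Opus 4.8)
The plan is to estimate the three quantities on the left of \eqref{decomposition-b} separately, all by $C\,G_{q,2t}(f)(z)$, using the locality of the partition of unity: for fixed $z\in\Cn$, only the indices $j\in J_z$ (equivalently, those $j$ with $a_j\in B(z,t/2)$, of which there are at most $N$) contribute to $f_1$ near $z$. Throughout, fix $z$ and let $h$ be the function supplied by Lemma~\ref{G-function} on the ball $B(z,2t)$, so that $M_{q,2t}(f-h)(z)=G_{q,2t}(f)(z)$ and $h$ is bounded on $B(z,t)$ by $C\,M_{q,2t}(f)(z)$; the point of introducing this single $h$ is that it is holomorphic on a ball containing all the relevant small balls $B(a_j,t)$, so that $h_j-h$ is holomorphic there and its sup-norm is controlled via the subharmonicity of $|h_j-h|^q$ together with Corollary~\ref{G-comparison}.

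First I would handle $\overline\partial f_1$. Since $\sum_j\overline\partial\psi_j=0$ on $\Cn$ by \eqref{psi-1}, we may write, for $w$ near $z$,
$$
	\overline\partial f_1(w)=\sum_{j\in J_w}(h_j(w)-h(w))\,\overline\partial\psi_j(w),
$$
the key gain being the insertion of the single holomorphic $h$. Using $|\overline\partial\psi_j|\le Ct^{-1}$, the bound $|J_w|\le N$, and the pointwise estimate $\sup_{B(a_j,t/2)}|h_j-h|\le C\,M_{q,t}(h_j-h)(a_j)\le C\,G_{q,2t}(f)(z)$ (the first step by plurisubharmonicity of $|h_j-h|^q$, the second by splitting $h_j-h=(h_j-f)+(f-h)$ and invoking \eqref{h-z} for $h_j$ and Corollary~\ref{G-comparison} for $h$, noting $B(a_j,t)\subset B(z,2t)$ when $a_j\in B(z,t/2)$), one gets the pointwise bound $|\overline\partial f_1(w)|\le C\,G_{q,2t}(f)(z)$ for all $w\in B(z,t/2)$; taking $w=z$ gives the first term, and integrating the $q$-th power over $B(z,t/2)$ gives the $M_{q,t/2}(\overline\partial f_1)$ term. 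That $f_1\in C^2$ follows because locally it is a finite sum of products of the $C^\infty$ functions $\psi_j$ with the holomorphic (hence $C^\infty$) $h_j$.

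For the last term, $M_{q,t/2}(f_2)(z)$ with $f_2=f-f_1$, I would again use $\sum_j\psi_j=1$ to write, on $B(z,t/2)$,
$$
	f_2(w)=f(w)-\sum_{j\in J_w}h_j(w)\psi_j(w)=(f(w)-h(w))-\sum_{j\in J_w}(h_j(w)-h(w))\psi_j(w),
$$
so that $|f_2(w)|\le |f(w)-h(w)|+C\,G_{q,2t}(f)(z)$ pointwise on $B(z,t/2)$ by the sup-bound on $h_j-h$ just obtained and $0\le\psi_j\le1$, $|J_w|\le N$. Raising to the $q$-th power, averaging over $B(z,t/2)\subset B(z,2t)$, and using $M_{q,2t}(f-h)(z)=G_{q,2t}(f)(z)$ (adjusting the averaging constant for the change of radius from $t/2$ to $2t$) yields $M_{q,t/2}(f_2)(z)\le C\,G_{q,2t}(f)(z)$. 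The main obstacle is purely bookkeeping: making sure the geometry lines up so that a single holomorphic comparison function $h$ on $B(z,2t)$ dominates \emph{all} the local pieces $h_j$ with $a_j\in B(z,t/2)$ — this is exactly what the factor $2t$ in $G_{q,2t}$ buys us — and tracking the finitely-many-overlap constant $N$ and the radius-dependent constants from Lemma~\ref{G-function} and Corollary~\ref{G-comparison} uniformly in $t$; no genuinely hard estimate is involved once $h$ is in place.
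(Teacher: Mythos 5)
Your proposal is correct and follows essentially the same route as the paper: exploit $\sum_j\overline\partial\psi_j=0$ to subtract a single holomorphic reference function, control $\sup|h_j-\text{ref}|$ by plurisubharmonicity of the $q$-th power together with Corollary~\ref{G-comparison}, and then average. The only (cosmetic) difference is that you take the reference to be a fresh minimizer $h$ on $B(z,2t)$ from Lemma~\ref{G-function}, whereas the paper uses $h_1$ for an index with $z\in B(a_1,t/2)$; both work, and your harmless slip that the contributing centers $a_j$ lie in $B(z,t)$ rather than $B(z,t/2)$ does not affect the containment $B(a_j,t)\subset B(z,2t)$ that the argument needs.
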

\begin{proof} Observe first that $f_1\in C^2(\C^n)$ follows directly from the properties of the functions $h_j$ and $\psi_j$.  For $z\in \Cn$,  we may assume $z\in B(a_1, t/2)$ without loss of generality. Then for those $j$ that satisfy $\overline \partial \psi_j(z)\neq 0$, $|h_j-h_1|^q$ is plurisubharmonic on $B(z, t/2)\subset B(a_j, t)$.  Hence, by Corollary \ref{G-comparison},
\begin{align*}
	\left|\overline{\partial}f_1(z)\right| 
	&=\left|\sum_{j=1}^\infty  \left( h_j(z)-h_1(z)\right) \overline \partial \psi_j(z)\right|\\
	&\le \sum_{j=1}^\infty  \left|h_j(z)-h_1(w)\right|  |\overline \partial \psi_j(z)|\\
	&\le C \sum_{\{j: \, |a_j-z|< t/2\}} M_{q, t/4}(h_j -h_1)(z)\\
	&\le C  \sum_{\{j: \, |a_j-z|< t/2\}} \left[ M_{q, t/4}(f-h_j)(z) + M_{q, t/4}(f-h_1)(w)\right]\\
	&\le C \sum_{\{j: \, |a_j-z|< t/2 \}} G_{q, t}(f)(a_j).
\end{align*}
Thus, using Corollary \ref{G-comparison} again, we get
$$
	\left|\overline{\partial}f_1(z)\right |
	\le C G_{q, 3t/2}(f)(z) \ \textrm{ for } \ z\in \C^n,
$$
and so,
\begin{align*}
	M_{q, t/2}(\overline{\partial}f_1)(z)^q
	&\le C \fr 1 {|B(z, t/2)|} \int_{B(z, t/2)}G_{q, 3t/2}(f)(w)^q dw\\
	&\le C G_{q, 2t}(f)(z)^q.
\end{align*}
Similarly, we have $ |f_2(\xi)|^q\le C \sum_{j=1}^\infty | f(\xi)-h_j(\xi)|^q \psi_j(\xi)^q$, and so
\begin{align*}
	M_{q, t/2}(f_2)(z)^q &\le C\sum_{j=1}^\infty \frac1{|B(z,t/2)|}\int_{B(z, t/2)} |f-h_j|^q \psi_j^q \,dv \\
	&\le C \sum_{\{j:\, |a_j-z|< t/2\}} G_{q, t}(f)(a_j)^q.
\end{align*}
Therefore,
$$
         M_{q, t/2}(f_2)(z)  \le C G_{q, 3t/2}(f)(z).
$$
Combining this and the other two estimates above gives~\eqref{decomposition-b}.
\end{proof}

Given $\{\psi_j\}$ as in \eqref{psi-1}, we have another decomposition $f= \mathfrak{F}_1+\mathfrak{F}_2$, where
\begin{equation}\label{P-decompositon}
	\mathfrak{F}_1=\sum_{j=1}^\infty P_{a_j, t}(f)  \psi_j
	\ \textrm{and}\  \mathfrak{F}_2= f-\mathfrak{F}_1.
\end{equation}
When $q=2$, the two decompositions coincide.

\begin{corollary} \label{proj-decomp}
Suppose $1\le  q<\infty$. For $f\in L^q_{\mathrm{loc}}$ and $t>0$,  we have $\mathfrak{F}_1\in C^2(\Cn)$  and
\begin{equation}\label{pro-decomp-a}
	\left|\overline{\partial}\mathfrak{F}_1(z)\right |
	+ M_{q, t/2}(\overline \partial \mathfrak{F}_1)(z)
	+ M_{q, t/2}(\mathfrak{F}_2 )(z)\le C G_{q,2t}(f)(z)
\end{equation}
for $ z\in \Cn$, where the constant $C$ is independent of $f$.
\end{corollary}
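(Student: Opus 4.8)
The plan is to repeat the proof of Lemma~\ref{basic-docomp} almost verbatim, with the near-minimizers $h_j$ replaced by the local Bergman projections $P_{a_j,t}(f)$ and with Corollary~\ref{G-comparison} replaced throughout by Lemma~\ref{Project}. This is exactly where the extra hypothesis $1\le q<\infty$ (absent from Lemma~\ref{basic-docomp}) is used: it guarantees both that $P_{a_j,t}(f)$ is well defined (since $f\in L^q_{\loc}\subset L^1_{\loc}$) and that Lemma~\ref{Project}, which rests on the $L^q$- and $L^1$-boundedness of the local Bergman projections $P_{z,r}$, is available. The membership $\mathfrak F_1\in C^2(\Cn)$ is immediate, since each $P_{a_j,t}(f)$ is holomorphic on $B(a_j,t)$, each $\psi_j\in C^2$, and the sum defining $\mathfrak F_1$ is locally finite; note also that for $q=2$ we have $P_{a_j,t}(f)=h_j$, so the two decompositions coincide and there is nothing to prove.

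For the $\overline\partial$-estimate, fix $z\in\Cn$ and, as in Lemma~\ref{basic-docomp}, assume $z\in B(a_1,t/2)$. From $\sum_j\overline\partial\psi_j\equiv 0$ we get
$$
  \overline\partial\mathfrak F_1(z)=\sum_{j}\bigl(P_{a_j,t}(f)(z)-P_{a_1,t}(f)(z)\bigr)\,\overline\partial\psi_j(z),
$$
a finite sum over the at most $N$ indices $j$ with $z\in\supp\psi_j\subset B(a_j,t/2)$. For each such $j$ the difference $P_{a_j,t}(f)-P_{a_1,t}(f)$ is holomorphic on a ball about $z$ contained in $B(a_j,t)\cap B(a_1,t)$, so $|P_{a_j,t}(f)(z)-P_{a_1,t}(f)(z)|^q$ is plurisubharmonic there and is dominated by its mean over a small ball centred at $z$. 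Bounding that mean by the triangle inequality and applying Lemma~\ref{Project} to $f-P_{a_j,t}(f)$ and to $f-P_{a_1,t}(f)$, then passing from the centres $a_j,a_1$ to $z$ via Corollary~\ref{G-comparison}, gives $|P_{a_j,t}(f)(z)-P_{a_1,t}(f)(z)|\le C\,G_{q,3t/2}(f)(z)$. Multiplying by $|\overline\partial\psi_j(z)|\le C t^{-1}$ and summing the $\le N$ terms yields $|\overline\partial\mathfrak F_1(z)|\le C\,G_{q,3t/2}(f)(z)$; averaging this pointwise bound over $B(z,t/2)$ and invoking Corollary~\ref{G-comparison} once more upgrades it to $M_{q,t/2}(\overline\partial\mathfrak F_1)(z)\le C\,G_{q,2t}(f)(z)$, exactly as in Lemma~\ref{basic-docomp}.

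For $\mathfrak F_2=f-\mathfrak F_1=\sum_j\bigl(f-P_{a_j,t}(f)\bigr)\psi_j$, convexity and $\sum_j\psi_j\equiv 1$ give the pointwise bound $|\mathfrak F_2(\xi)|^q\le C\sum_j|f(\xi)-P_{a_j,t}(f)(\xi)|^q\psi_j(\xi)$; integrating over $B(z,t/2)$, using $\supp\psi_j\subset B(a_j,t/2)$ together with Lemma~\ref{Project}, and summing the $\le N$ nonzero terms gives $M_{q,t/2}(\mathfrak F_2)(z)\le C\,G_{q,2t}(f)(z)$. Combining the three estimates yields \eqref{pro-decomp-a}. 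The one point requiring genuine care — the step I would single out as the main obstacle — is the bookkeeping of radii: Lemma~\ref{Project} controls $f-P_{z,r}(f)$ only on the ball $B\bigl(z,(r-s)/2\bigr)$, which is strictly smaller than the ball $B(z,r-s)$ available for the minimizers through Corollary~\ref{G-comparison}, so one must apply Lemma~\ref{Project} with the inner radius a sufficiently small fixed fraction of $t$ (shrinking the supports of the $\psi_j$, or enlarging the reference balls, by a bounded factor if necessary) so that the estimate holds on all of $\supp\psi_j$. This affects only absolute constants and the fixed multiple of $t$ inside $G_{q,\cdot}$, which is harmless since such multiples already occur in Lemma~\ref{basic-docomp}.
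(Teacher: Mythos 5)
Your proposal matches the paper's proof, which is given in a single line (``carry out the argument of Lemma~\ref{basic-docomp} using \eqref{P-z-r} instead of \eqref{G-comp-a}''); you have simply supplied the omitted details, and correctly identified that the only point of substance is the radius bookkeeping forced by the smaller ball $B(z,(r-s)/2)$ in Lemma~\ref{Project}. One small correction to your proposed fix: shrinking the inner radius $s$ does not help, since $(r-s)/2<r/2$ for every $s>0$; the clean repair is to observe that the computation \eqref{Projet-in-Br} actually yields the conclusion of Lemma~\ref{Project} for all $w\in B(z,r-s-\epsilon)$ with any fixed $\epsilon>0$ (only the constant degrades), which is enough to cover $\supp\psi_j\subset B(a_j,t/2)$.
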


\begin{proof}
The proof can be carried out as that of Lemma \ref{basic-docomp} using~\eqref{P-z-r} instead of~\eqref{G-comp-a}. We omit the details.
\end{proof}

\subsection{The decomposition}\label{decomposition subsection} In our analysis, we will appeal to $\overline \partial$-techniques several times. Let $\Omega\subset \Cn$ be strongly pseudoconvex with $C^4$ boundary, and let $S$ be a $\overline \partial $-closed $(0, 1)$ form on $\Omega$ with $L^p$ coefficients, $1\le p\le \infty$. As in~\cite{Kr82}, we denote by ${\mathrm H}_{\Omega}( S) $  the   Henkin's solution of $\overline \partial$-equation  $\overline \partial   u =S$ on $\Omega$.  We observe that Theorem 10.3.9 of~\cite{Kr82} implies that, for $1\le q<\infty$,
\begin{equation}\label{Henkin-solution}
      \|{\mathrm H}_{\Omega}( S)\|_{L^q(\Omega, dv)}\le C  \|  S\|_{L^q(\Omega, dv)},
\end{equation}
where the constant $C$ is independent of $S$ and of ``small'' pertubations of the boundary. (We note that the second item in Theorem~10.3.9 of~\cite{Kr82} is stated incorrectly and should read $\|u\|_{L^q} \leq C_p \|f\|_p$ instead.) Indeed, to deduce \eqref{Henkin-solution}, we consider three cases. First, for $1\le q<\frac {2n+2}{2n+1}$,
$$
    \|{\mathrm H}_{\Omega}( S)\|_{L^q(\Omega, dv)}\le C \|S\|_{L^1(\Omega, dv)}\le C  \|  S\|_{L^q(\Omega, dv)}.
$$
For $ q=\frac {2n+2}{2n+1}$, take $1< p=q<2n+2$ and $q_1=\frac {2n+2}{2n}>q$. Then $\frac 1{q_1}=\frac 1 p -\frac 1{2n+2}$, and by the second item in Theorem 10.3.9 of~\cite{Kr82}, we have
$$
    \|{\mathrm H}_{\Omega}( S)\|_{L^q(\Omega, dv)}\le C
     \|{\mathrm H}_{\Omega}( S)\|_{L^{q_1}(\Omega, dv)}
    \le C  \|  S\|_{L^p(\Omega, dv)}.
$$
Finally, for $ q> \frac {2n+2}{2n+1}$, choose $p$ so that $\frac 1q=\frac 1 p -\frac 1{2n+2}$. Then $1<p<2n+2$ and $p<q$. Now Theorem 10.3.9 of~\cite{Kr82} implies
   $$
    \|{\mathrm H}_{\Omega}( S)\|_{L^q(\Omega, dv)}
    \le C  \|  S\|_{L^p(\Omega, dv)} \le C  \|  S\|_{L^q(\Omega, dv)}.
$$

\smallskip

\begin{theorem}\label{BDA-equivalence}
Suppose $1\le q<\infty$, $0<s<\infty$, and  $f\in L^q_{\mathrm {loc}}$. Then $f\in  \mathrm{IDA}^{s, q}$ if and only if $f$ admits a decomposition $f=f_1+f_2$  such that
\begin{eqnarray}\label{q-less-then-p-m}
	f_1\in C^2(\Cn), \quad M_{q, r }(\overline \partial f_1)+ M_{q, r}(f_2) \in L^{s}
\end{eqnarray}
for some (or any) $r>0$.  Furthermore, for fixed $\tau, r>0$, it holds that
\begin{equation}\label{norm-euival}
	\|f\|_{\IDA^{s, q}} \simeq \left\| G_{q, \tau}(f)\right\|_{L^s}
	\simeq  \inf \left \{ \|M_{q, r }(\overline \partial f_1) \|_{L^{s}} + \|M_{q, r}(f_2)\|_{L^{s}}\right\}
\end{equation}
where the infimum is taken over all possible decompositions $f=f_1+f_2$ that satisfy \eqref{q-less-then-p-m} with a fixed $r$.
\end{theorem}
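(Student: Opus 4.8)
The plan is to establish, for each fixed radius $\rho>0$, the equivalence (up to constants depending only on $n,m,M,q,s$ and the radii involved) of the three quantities
\[
\|G_{q,\rho}(f)\|_{L^s},\qquad \inf_{f=f_1+f_2}\big(\|M_{q,\rho}(\overline\partial f_1)\|_{L^s}+\|M_{q,\rho}(f_2)\|_{L^s}\big),\qquad \|f\|_{\IDA^{s,q}},
\]
where the infimum runs over all decompositions with $f_1\in C^2(\Cn)$; the theorem, including the ``for some (or any) $r$'' clause, then follows on specializing $\rho=1$ (and $\rho=\tau$). One preliminary fact I would record first is the radius independence of the $q$-th means in $L^s$: for all $0<r_1,r_2<\infty$ one has $\|M_{q,r_1}(g)\|_{L^s}\simeq\|M_{q,r_2}(g)\|_{L^s}$ for every locally $q$-integrable $g$ (or $(0,1)$-form). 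When $r_1\le r_2$ this is pointwise and trivial, $M_{q,r_1}(g)\le(r_2/r_1)^{2n/q}M_{q,r_2}(g)$; the reverse follows from a routine covering argument based on an $(r_1/2)$-lattice, the bounded-overlap bound \eqref{lattice}, and, when $s<q$, the inequality $(\sum_k x_k)^{s/q}\le\sum_k x_k^{s/q}$.

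Next I would prove that a decomposition controls $G_{q,\rho}(f)$. Let $f=f_1+f_2$ with $f_1\in C^2(\Cn)$ and $M_{q,\rho}(\overline\partial f_1),M_{q,\rho}(f_2)\in L^s$, and fix $z\in\Cn$. Since $B(z,\rho)$ is strongly pseudoconvex with smooth boundary and $\overline\partial f_1$ is $\overline\partial$-closed with continuous (hence, on $\overline{B(z,\rho)}$, bounded and $L^q$) coefficients, I take $u=\mathrm{H}_{B(z,\rho)}(\overline\partial f_1)$, the Henkin solution of $\overline\partial u=\overline\partial f_1$ on $B(z,\rho)$. Rescaling \eqref{Henkin-solution} from the unit ball to $B(z,\rho)$ by an affine change of variables (this uses $q\ge1$) gives $\|u\|_{L^q(B(z,\rho),dv)}\le C\|\overline\partial f_1\|_{L^q(B(z,\rho),dv)}$, i.e.\ $M_{q,\rho}(u)(z)\le C\,M_{q,\rho}(\overline\partial f_1)(z)$. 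Then $h:=f_1-u\in H(B(z,\rho))$ and $f-h=f_2+u$, so Minkowski's inequality ($q\ge1$ again) yields the pointwise estimate
\[
G_{q,\rho}(f)(z)\le M_{q,\rho}(f_2+u)(z)\le M_{q,\rho}(f_2)(z)+C\,M_{q,\rho}(\overline\partial f_1)(z).
\]
Taking $L^s$-norms and infimizing over decompositions gives $\|G_{q,\rho}(f)\|_{L^s}\le C\inf(\cdots)$.

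For the converse I would invoke Lemma~\ref{basic-docomp}: applied with the $(t/2)$-lattice chosen so that $2t=\rho$ (hence $t/2=\rho/4$), it produces the decomposition $f=f_1+f_2$ of \eqref{decomposition-a}, with $f_1\in C^2(\Cn)$ and the pointwise bound $M_{q,\rho/4}(\overline\partial f_1)(z)+M_{q,\rho/4}(f_2)(z)\le C\,G_{q,\rho}(f)(z)$. Taking $L^s$-norms shows this particular decomposition obeys \eqref{q-less-then-p-m} with $r=\rho/4$ and has $\|M_{q,\rho/4}(\overline\partial f_1)\|_{L^s}+\|M_{q,\rho/4}(f_2)\|_{L^s}\le C\|G_{q,\rho}(f)\|_{L^s}$; by the radius-independence lemma this same $(f_1,f_2)$ then satisfies \eqref{q-less-then-p-m} for every prescribed $r>0$, which also settles the ``for some (or any) $r$'' assertion. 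Combining the two implications with matching radii gives, for every $\rho>0$,
\[
\|G_{q,\rho}(f)\|_{L^s}\simeq\inf_{f=f_1+f_2}\big(\|M_{q,r}(\overline\partial f_1)\|_{L^s}+\|M_{q,r}(f_2)\|_{L^s}\big),
\]
the infimum over decompositions obeying \eqref{q-less-then-p-m} with the fixed $r$; in particular the right-hand side is independent of $\rho$ up to constants, so $\|G_{q,\rho}(f)\|_{L^s}\simeq\|G_{q,\rho'}(f)\|_{L^s}$ for all $\rho,\rho'>0$. Taking $\rho=1$ and $\rho=\tau$ yields all of \eqref{norm-euival} together with the decomposition characterization of $\IDA^{s,q}$.

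I expect the main obstacle to be not a new idea but the careful matching of \emph{three} different radii --- the one in $G_{q,\cdot}$, the one in the means $M_{q,\cdot}$, and the lattice spacing in \eqref{decomposition-a} --- which do not coincide: Lemma~\ref{basic-docomp} unavoidably loses a factor $4$ (it bounds $M_{q,t/2}$ by $G_{q,2t}$), so the circle of implications only closes once one knows that $\|M_{q,r}(\cdot)\|_{L^s}$, and hence $\|G_{q,r}(f)\|_{L^s}$, is independent of $r$ up to constants. The two other points requiring attention are the scaling of the constant in \eqref{Henkin-solution} under dilation of the ball (harmless since $\rho$ is fixed once and for all) and the repeated, essential use of $q\ge1$, both in Minkowski's inequality for $M_{q,\rho}$ and in the $L^q$-estimate \eqref{Henkin-solution} for the Henkin solution.
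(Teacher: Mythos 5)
Your proposal is correct and follows essentially the same route as the paper: the radius-independence of $\|M_{q,\cdot}(\cdot)\|_{L^s}$ via a covering argument (the paper's estimate \eqref{norm-equ}), Lemma~\ref{basic-docomp} for the direction from $G_{q,\rho}$ to a decomposition, and the Henkin solution with the $L^q$-bound \eqref{Henkin-solution} for the converse. The only (harmless) deviation is that you use $h=f_1-u$ directly as the holomorphic competitor, whereas the paper routes the same estimate through the local Bergman projection $P_{z,2\tau}$ (a formulation it reuses later in the completeness proof); both yield the same pointwise bound $G_{q,\rho}(f)\le M_{q,\rho}(f_2)+C\,M_{q,\rho}(\overline\partial f_1)$.
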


\begin{proof}
First, given $0<r<R<\infty$, we have some $\mathbf a_1, \mathbf a_1, \cdots, \mathbf a_m\in \Cn$ so that $B(0, R)\subset \cup_{j=1}^m B(\mathbf a_j, r)$. Then, for $g\in L^q_{\mathrm{loc}}$,
$$
     M_{q, R}(g )(z)^s\le C \sum_{j=1}^m  M_{q,  r}(g)(z+ \mathbf a_j)^s, \,\, z\in \Cn,
$$
and
\begin{equation}\label{norm-equ}
\begin{split}
	\int_{\Cn}  M_{q, R}(g )(z)^sdv(z) &\le C \sum_{j=1}^m \int_{\Cn} M_{q,  r}(g)(z+ \mathbf a_j)^sdv(z)\\
	&\le C \int_{\Cn} M_{q,  r}(g)(z)^sdv(z).
\end{split}
\end{equation}
This implies that \eqref{q-less-then-p-m} holds for some $r$ if and only if it holds for any $r$.

Suppose that $f\in L^q_{\mathrm{loc}}$ with $\|G_{q, \tau}(f)\|_{L^s}<\infty$ for some $\tau>0$ and decompose $f=f_1+f_2$ as in Lemma \ref{basic-docomp} with  $t=\fr {\tau}2$.  Then $f_1\in C^2(\Cn)$ and
$$
	\left|\overline{\partial}f_1(z)\right | +
	M_{q, \tau/4}(\overline \partial f_1)(z) + 
	M_{q, \tau/4}(f_2 )(z)\le C G_{q,\tau}(f)(z).
$$
Now for any $r>0$,  we have
\begin{equation}\label {M-est-1}
	\| M_{q, r}( \overline \partial  f_1)\|_{L^s} + \| M_{q, r}(   f_2)\|_{L^s}
	\le   C  \| G_{q, \tau}(f)\|_{L^s}.
\end{equation}
This implies that, $f=f_1+f_2$ satisfies   (\ref{q-less-then-p-m}).

Conversely, suppose $f=f_1+f_2$ with $f_1\in C^2(\Cn)$ and $M_{q, r}(\overline \partial f_1)+ M_{q, r}(f_2) \in L^{s}$ for some $r>0$ as in Theorem~\ref{BDA-equivalence}. Then for any  $\tau>0$,
\begin{equation}\label{f-2-norm}
	\|G_{q, \tau}(f_2)\|_{L^s} \le C   \|M_{q, \tau}(f_2)\|_{L^s} 
	\le C \|M_{q, r}(f_2)\|_{L^s} .
\end{equation}
So $f_2\in \mathrm{IDA}^{s, q}$.
To consider $f_1$, we write $ u= {\mathrm H}_{B(z, 2\tau)}(\overline \partial f_{1} ) $  for the Henkin  solution of the equation  $\overline \partial u= \overline \partial f_1$ on $B(z, 2\tau)$.  From  (\ref{Henkin-solution}) and (\ref{q-less-then-p-m}), $u$ satisfies
\begin{equation}\label{d-bar-M}
      M_{q, 2\tau}(u)(z)\le C M_{q, 2\tau}(\overline \partial f_1)(z) \, \, \textrm{ for }\,\, z\in \C^n,
\end{equation}
which implies that $u\in L^q(B(z, 2\tau), dv)$. Similarly to (\ref{Projet-in-Br}),
$$
	M_{q, \tau}\left({\mathrm P} _{z, 2\tau} ( u)\right)(z)
	\le C  M_{q, 2\tau}(u)(z).
$$
Thus,
\begin{equation}\label{d-bar-M-1}
\begin{split}
	M_{q, \tau}\left(u - {\mathrm P} _{z, 2\tau} ( u )\right )(z)
	&\le    M_{q, \tau}\left(u\right )(z) + M_{q, \tau}\left({\mathrm P} _{z, 2\tau} (u ) \right )(z)\\
	&\le    C M_{q, 2\tau}( u  ) (z).
\end{split}
\end{equation}
Since
$$
	f_1- u \in L^q(B(z, 2\tau), dv) \, \,\textrm{and}\,\, 
	\overline \partial \left(f_1 -u\right)=0,
$$
we have
$$
	f_1- u  \in A^q(B(z, 2\tau), dv).
$$
Notice that $P_{z, 2\tau}|_{A^q(B(z, 2\tau), dv)}= \mathrm{I}$, and so
\begin{equation}\label{d-bar-M-2}
	f_{ 1}(\xi) - {\mathrm P} _{z, 2\tau} ( f_{ 1})(\xi)
	= u(\xi) - {\mathrm P} _{z, 2\tau}\left(u\right )(\xi)   \,\, 
	\textrm{ for }\, \xi \in B(z, 2\tau).
\end{equation}
Combining \eqref{d-bar-M}, \eqref {d-bar-M-1} and \eqref {d-bar-M-2}, we get
\begin{align*}
     M_{q,\tau}\left( f_{ 1} - {\mathrm P}_{z, 2\tau} ( f_{ 1})\right)(z)
     &= M_{q, \tau}\left(u - {\mathrm P}_{z, 2\tau}\left(u\right )\right)(z)\\
     &\le M_{q, 2\tau}\left(u \right)(z) \le C M_{q, 2\tau}\left( \overline \partial f_1  \right)(z).
\end{align*}
Therefore, by (\ref{norm-equ}),
\begin{align*}
	\|G_{q, \tau} ( f_{ 1})\|_{L^s}
	&\le  \|M_{q, r}\left( f_{ 1} - {\mathrm P} _{z, 2\tau} ( f_{ 1})\right)\|_{L^s}\\
	&\le C \|  M_{q, 2\tau}\left( \overline \partial f_1  \right)  \|_{L^s} \le C  \|  M_{q, r}\left( \overline \partial f_1  \right)  \|_{L^s}.
\end{align*}
This and \eqref{f-2-norm} yield
\begin{equation}\label{f-decomposition}
	\|G_{q, \tau} ( f)\|_{L^s}  
	\le C \left\{ \|  M_{q, r}\left( \overline \partial f_1  \right)  \|_{L^s}+  \|M_{q,r}(f_2)\|_{L^s} \right\}.
\end{equation}
Thus, $f=f_1+f_2 \in \mathrm{IDA}^{s, q}$.

It remains to note that the norm equivalence \eqref{norm-euival} follows from \eqref{M-est-1} and \eqref{f-decomposition}.
\end{proof}

With a similar proof we have the following corollary.

\begin{corollary}\label{VDA-1}
Suppose $1\le q<\infty$, and $f\in L^q_{\mathrm {loc}}$. Then $f\in  \mathrm{BDA}^q$ {\rm (}or $ \mathrm{VDA}^q${\rm )} if and only if  $f=f_1+f_2$, where
\begin{eqnarray}\label{VDA}
	f_1\in C^2(\Cn), \, \, \overline{\partial}f_1 \in L^\infty_{0, 1} \quad (\textrm{or}   \lim_{z\to \infty} |\overline{\partial}f_1|=0 )
\end{eqnarray}
and
\begin{equation} \label{VDA-a}
	M_{q, r}(f_2) \in L^\infty \quad (\textrm{or}  \lim_{z\to \infty} M_{q, r}(f_2)=0)
\end{equation}
for some (or any) $r>0$. Furthermore,
$$
	\|f\|_{\BDA^q} \simeq
	\inf\{ \|\overline \partial f_1\|_{L^\infty_{0, 1}}+ \|M_{q, r}(f_2)\|_{L^\infty}\},
$$
where the infimum is taken over all possible decompositions $f=f_1+f_2$ with $f_1$ and $f_2$  satisfying the conditions in~\eqref{VDA} and~\eqref{VDA-a}.
\end{corollary}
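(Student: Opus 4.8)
The plan is to rerun the proof of Theorem~\ref{BDA-equivalence} with two modifications. For $\BDA^q=\IDA^{\infty,q}$ one simply takes $s=\infty$: every $L^s$-norm becomes an $L^\infty$-norm, so the finite-covering step~\eqref{norm-equ} reads $\sup_z M_{q,R}(g)(z)\le C\sup_z M_{q,r}(g)(z)$ and the condition in~\eqref{q-less-then-p-m} with $s=\infty$ is again independent of $r$. In the forward direction, decompose $f=f_1+f_2$ as in Lemma~\ref{basic-docomp} with lattice scale $t=1/2$, so that~\eqref{decomposition-b} gives the pointwise bound $|\overline\partial f_1(z)|+M_{q,1/4}(\overline\partial f_1)(z)+M_{q,1/4}(f_2)(z)\le C\,G_{q,1}(f)(z)$; taking the supremum over $z$ (and passing between averaging radii via~\eqref{norm-equ}) shows $\overline\partial f_1\in L^\infty_{0,1}$, $M_{q,r}(f_2)\in L^\infty$, with $\|\overline\partial f_1\|_{L^\infty_{0,1}}+\|M_{q,r}(f_2)\|_{L^\infty}\le C\|f\|_{\BDA^q}$. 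The converse is the pointwise reading of the corresponding part of Theorem~\ref{BDA-equivalence}: on each ball $B(z,2\tau)$ set $u={\mathrm H}_{B(z,2\tau)}(\overline\partial f_1)$, apply~\eqref{Henkin-solution} (whose constant is uniform in the center of the ball) to get $M_{q,2\tau}(u)(z)\le C\,M_{q,2\tau}(\overline\partial f_1)(z)$, note $f_1-u\in A^q(B(z,2\tau),dv)$, subtract the Bergman projection $P_{z,2\tau}$, and conclude $G_{q,\tau}(f_1)(z)\le C\,M_{q,2\tau}(\overline\partial f_1)(z)\le C\|\overline\partial f_1\|_{L^\infty_{0,1}}$; together with $G_{q,\tau}(f_2)(z)\le C\,M_{q,\tau}(f_2)(z)\le C\|M_{q,r}(f_2)\|_{L^\infty}$ and subadditivity of $G_{q,\tau}$ this gives $\|f\|_{\BDA^q}\le C\{\|\overline\partial f_1\|_{L^\infty_{0,1}}+\|M_{q,r}(f_2)\|_{L^\infty}\}$. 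Since the pointwise bound from Lemma~\ref{basic-docomp} also forces $\|\overline\partial f_1\|_{L^\infty}\le C\|f\|_{\BDA^q}$ in the forward construction, the two infima are comparable, which is the asserted norm equivalence.

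For $\VDA^q$ the identical chains of inequalities are used, but read pointwise in $z$ and tracked as $z\to\infty$. In the forward direction the decomposition of Lemma~\ref{basic-docomp} with $t=1/2$ gives $|\overline\partial f_1(z)|+M_{q,1/4}(\overline\partial f_1)(z)+M_{q,1/4}(f_2)(z)\le C\,G_{q,1}(f)(z)\to 0$, hence $|\overline\partial f_1(z)|\to 0$ and, via~\eqref{norm-equ}, $M_{q,r}(f_2)(z)\le C\max_{1\le j\le m}M_{q,1/4}(f_2)(z+\mathbf a_j)\to 0$ for every $r>0$. In the converse direction, $G_{q,1}(f_2)(z)\le C\,M_{q,1}(f_2)(z)\to 0$ (again passing between radii with~\eqref{norm-equ}), while $G_{q,1}(f_1)(z)\le C\,M_{q,2}(\overline\partial f_1)(z)\le C\sup_{w\in B(z,2)}|\overline\partial f_1(w)|\to 0$; subadditivity of $G_{q,1}$ then yields $G_{q,1}(f)(z)\to 0$, i.e.\ $f\in\VDA^q$.

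There is no essential new difficulty beyond Theorem~\ref{BDA-equivalence}; the points needing care are purely bookkeeping: choosing the lattice scale $t=1/2$ in Lemma~\ref{basic-docomp} so that the estimates come out in the radius-$1$ quantity $G_{q,1}(f)$ appearing in the definitions of $\BDA^q$ and $\VDA^q$; passing between different averaging radii by the finite-covering inequality~\eqref{norm-equ}; and, most importantly, using the fact recorded after~\eqref{Henkin-solution} that the constant in the Henkin $L^q$-estimate does not depend on the center of $B(z,2\tau)$ (nor on small perturbations of its boundary), so that all estimates on $B(z,2\tau)$ hold uniformly in $z$---which is exactly what both the $L^\infty$ bound in the $\BDA^q$ case and the limit $z\to\infty$ in the $\VDA^q$ case require. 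I would also isolate explicitly, as the one genuinely new remark, that the vanishing condition $\lim_{z\to\infty}G_{q,r}(f)(z)=0$ does not depend on the radius $r>0$, since it is implicit in the equivalence being proved but convenient to state separately.
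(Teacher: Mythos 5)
Your proposal is correct and follows exactly the route the paper intends: it is the proof of Theorem~\ref{BDA-equivalence} rerun with $s=\infty$ (suprema in place of $L^s$-norms, with the same decomposition from Lemma~\ref{basic-docomp}, the same Henkin-solution/Bergman-projection step, and the covering inequality~\eqref{norm-equ} to change radii), together with the pointwise tracking of the limits as $z\to\infty$ for the $\VDA^q$ case. The details you supply, including the uniformity in $z$ of the constant in~\eqref{Henkin-solution} and the radius-independence of the vanishing condition, are precisely the bookkeeping the paper suppresses when it says ``with a similar proof.''
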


\begin{corollary}\label{r independence} Suppose $1\le q<\infty$. Different values of $r$ give equivalent  seminorms  $\|G_{q, r} ( \cdot)\|_{L^s}$ on $\mathrm{IDA}^{s, q}$ when $0<s<\infty$ and on both $\mathrm{BDA}^q$ and $\mathrm{VDA}^q$ when $s=\infty$.
\end{corollary}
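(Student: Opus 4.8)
The plan is to reduce the claim to the norm equivalence already established in Theorem~\ref{BDA-equivalence} (and its analogue Corollary~\ref{VDA-1}). The key observation is that the middle quantity appearing in \eqref{norm-euival}, namely the infimum over decompositions $f = f_1 + f_2$ of $\|M_{q,r}(\overline\partial f_1)\|_{L^s} + \|M_{q,r}(f_2)\|_{L^s}$, is \emph{manifestly independent of $\tau$}, since it only involves a single fixed mean-radius $r$ and does not reference $G_{q,\tau}$ at all. Theorem~\ref{BDA-equivalence} tells us that for every $\tau>0$ the seminorm $\|G_{q,\tau}(f)\|_{L^s}$ is comparable, with constants depending only on $n$, $m$, $M$, $q$, $s$, $\tau$ and $r$, to this $\tau$-free infimum. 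Hence, given $\tau_1, \tau_2 > 0$, applying \eqref{norm-euival} once with $\tau = \tau_1$ and once with $\tau = \tau_2$ (keeping the \emph{same} auxiliary $r$ in both applications) yields
$$
	\|G_{q,\tau_1}(f)\|_{L^s} \simeq \inf\left\{ \|M_{q,r}(\overline\partial f_1)\|_{L^s} + \|M_{q,r}(f_2)\|_{L^s} \right\} \simeq \|G_{q,\tau_2}(f)\|_{L^s},
$$
which is exactly the asserted equivalence for $0<s<\infty$. The case $s=\infty$ for $\BDA^q$ is handled identically, invoking the norm equivalence at the end of Corollary~\ref{VDA-1} in place of \eqref{norm-euival}; again the decomposition-side infimum $\inf\{\|\overline\partial f_1\|_{L^\infty_{0,1}} + \|M_{q,r}(f_2)\|_{L^\infty}\}$ carries no reference to any $\tau$, so comparing two values of $\tau$ through it gives the result.

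For the $\VDA^q$ case one cannot quote a single displayed norm equivalence, so I would instead argue directly at the level of the vanishing condition. Fix $\tau_1 < \tau_2$. The inequality $G_{q,\tau_1}(f)(z) \le C\, G_{q,\tau_2}(f)(z)$ is immediate from Corollary~\ref{G-comparison} (take $w = z$, $s = \tau_1$, $r = \tau_2$, noting $B(z,\tau_1) \subset B(z,\tau_2)$ and that the minimizing $h$ on the larger ball is a competitor on the smaller), so the vanishing of $G_{q,\tau_2}(f)$ at infinity forces the vanishing of $G_{q,\tau_1}(f)$. For the reverse direction, cover $B(0,\tau_2)$ by finitely many balls $B(\mathbf a_1,\tau_1),\dots,B(\mathbf a_m,\tau_1)$; then for each $z$ the ball $B(z,\tau_2)$ is covered by $B(z+\mathbf a_1,\tau_1),\dots,B(z+\mathbf a_m,\tau_1)$, and choosing on $B(z,\tau_2)$ the holomorphic competitor that is piecewise the near-minimizer on $B(z,\tau_1)$ does not quite work because a single $h$ is needed; instead I would pass through the decomposition of Corollary~\ref{VDA-1}. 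Concretely, $G_{q,\tau_1}(f) \to 0$ at infinity gives (by the proof of Corollary~\ref{VDA-1}, which is the ``similar proof'' alluded to after Theorem~\ref{BDA-equivalence}) a decomposition $f = f_1 + f_2$ with $|\overline\partial f_1| \to 0$ and $M_{q,r}(f_2) \to 0$ at infinity; feeding this decomposition back through the Henkin-solution estimate \eqref{Henkin-solution} and the local Bergman-projection bound exactly as in the proof of Theorem~\ref{BDA-equivalence}, but tracking the ``vanishing at infinity'' property rather than the $L^s$ norm, shows $G_{q,\tau_2}(f) \to 0$ at infinity.

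I expect the main (and essentially only) obstacle to be bookkeeping rather than mathematics: one must be careful that in applying Theorem~\ref{BDA-equivalence} for two different $\tau$'s the \emph{same} value of $r$ is used on the decomposition side, so that the two chains of comparisons genuinely link up, and one must record that all implicit constants depend only on the allowed parameters and not on $f$. A secondary subtlety is the $\VDA^q$ direction from small $\tau$ to large $\tau$, where the naive finite-covering argument fails because the infimum in $G_{q,\tau_2}$ is over a \emph{single} holomorphic function on the big ball; routing this through the explicit decomposition of Corollary~\ref{VDA-1} (equivalently, re-running the proof of Theorem~\ref{BDA-equivalence} with ``$\in L^s$'' replaced throughout by ``$\to 0$ at $\infty$'') circumvents the issue cleanly. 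Everything else is a direct citation of results already proved in this section.
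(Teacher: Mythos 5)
Your proposal is correct and follows exactly the route the paper intends: Corollary~\ref{r independence} is read off from the $\tau$-independence of the decomposition-side infimum in \eqref{norm-euival} (indeed, the first equivalence in \eqref{norm-euival} already compares $\|G_{q,\tau}(f)\|_{L^s}$ with $\|G_{q,1}(f)\|_{L^s}$ directly), and from the analogous statement in Corollary~\ref{VDA-1} when $s=\infty$. Your careful treatment of the $\mathrm{VDA}^q$ case---passing from the vanishing of $G_{q,\tau_1}(f)$ to a decomposition with $|\overline\partial f_1|\to 0$ and $M_{q,r}(f_2)\to 0$ and then back to the vanishing of $G_{q,\tau_2}(f)$, rather than attempting a finite covering of the larger ball---is precisely the ``similar proof'' the paper leaves implicit, and you correctly identify why the naive covering argument fails there.
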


\begin{remark}\label{IDA-BMO}
Recall that each $f$ in $\mathrm{BMO}^{q}$ can be decomposed as $f = f_1 + f_2$, where $f_1$ is of bounded oscillation $\mathrm{BO}$ and $f_2$ has a bounded average $\mathrm{BA^q}$ (see~\cite{Zh12} for the one-dimensional case and~\cite{Lv19} for the general case). Furthermore, we may choose $f_1$ to be a Lipschitz function in $C^2(\C^n)$ (see Corollary 3.37 of \cite{Zh12}), that is, $f\in \BMO^{q}$ if and only if $f=f_1+f_2$ with all $\frac {\partial f_1}{\partial x_j}\in L^\infty$ for $j=1, 2, \ldots, 2n$ and $f_2\in \mathrm{BA}^q$, or in the language of complex analysis both $\overline{\partial}  f_1$ and $\overline{\partial} \bar {f_1}$ are bounded. Therefore, $f\in \mathrm{BMO}^{q}$ if and only if $f, \bar f \in \mathrm{BDA}^q$. For a similar relationship between $\mathrm{IMO}^q$ and the $\IDA$ spaces, see Lemma 6.1 of \cite{HV22} and Theorem \ref{doubling-bdd} below.
\end{remark}

\subsection{$\IDA$ as a Banach space} We next prove that $\IDA^{s,q}/H(\C^n)$ with $1\le s, q< \infty$ is a Banach space when equipped with the induced norm
\begin{equation}\label{e:quotient norm}
	\| f + H(\C^n)\| = \|f\|_{\IDA^{s,q}}
\end{equation}
for $f\in \IDA^{s,q}$. 

\begin{theorem}\label{complete}
For $1\le s, q< \infty$, the quotient  space   $\mathrm{IDA}^{s, q}/H(\Cn)$ is a Banach space with the norm induced by $\|\cdot\|_{\IDA^{s,q}}$.
\end{theorem}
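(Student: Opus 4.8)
The plan is to verify that $\IDA^{s,q}/H(\C^n)$ is complete by showing that every absolutely convergent series in the quotient converges, which (for a normed space) is equivalent to completeness. Before that, one must check that \eqref{e:quotient norm} genuinely defines a norm on the quotient: since $G_{q,1}(f) = 0$ precisely when $f$ agrees a.e.\ on every unit ball with some holomorphic function, a routine patching argument (two holomorphic functions agreeing on the overlap of two balls coincide there) shows $\|f\|_{\IDA^{s,q}} = 0$ iff $f \in H(\C^n)$, so the induced norm is well defined and positive-definite; the triangle inequality and homogeneity descend immediately from $L^s$.

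For completeness, I would use the decomposition machinery of Theorem~\ref{BDA-equivalence} to transfer the problem into spaces where completeness is standard. Fix a $\tfrac12$-lattice and the partition of unity $\{\psi_j\}$, and for $f \in \IDA^{s,q}$ use the canonical decomposition $f = f_1 + f_2$ from \eqref{decomposition-a} (with $t=1/2$, say). By \eqref{norm-euival} we have the two-sided estimate
$$
	\|f\|_{\IDA^{s,q}} \simeq \|M_{q,r}(\overline\partial f_1)\|_{L^s} + \|M_{q,r}(f_2)\|_{L^s}.
$$
Given a sequence $(g^{(k)})$ in $\IDA^{s,q}$ with $\sum_k \|g^{(k)}\|_{\IDA^{s,q}} < \infty$, decompose each $g^{(k)} = g^{(k)}_1 + g^{(k)}_2$ this way. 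Then $\sum_k \|M_{q,r}(\overline\partial g^{(k)}_1)\|_{L^s} < \infty$ and $\sum_k \|M_{q,r}(g^{(k)}_2)\|_{L^s} < \infty$. The second series controls $\sum_k \|g^{(k)}_2\|_{L^q(B(z,r))}$ locally, so $\sum_k g^{(k)}_2$ converges in $L^q_{\loc}$ to some $f_2$ with $M_{q,r}(f_2) \le \sum_k M_{q,r}(g^{(k)}_2) \in L^s$. For the $g^{(k)}_1$ part, one controls the $(0,1)$-forms $\overline\partial g^{(k)}_1$: the series $\sum_k \overline\partial g^{(k)}_1$ converges in $L^q_{\loc}$ (indeed in $L^s(L^q_{\loc})$-sense), and since each term is $\overline\partial$-closed so is the sum; solving $\overline\partial u = \sum_k \overline\partial g^{(k)}_1$ locally via the Henkin solution and the estimate \eqref{Henkin-solution}, exactly as in the converse direction of Theorem~\ref{BDA-equivalence}, produces an $f_1 \in C^2$-type representative (or at least one whose $G_{q,\tau}$ is controlled) with $M_{q,r}(\overline\partial f_1)$ dominated by $\sum_k M_{q,r}(\overline\partial g^{(k)}_1) \in L^s$. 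Setting $f = f_1 + f_2$, Theorem~\ref{BDA-equivalence} gives $f \in \IDA^{s,q}$, and the same two-sided estimate shows $\|f - \sum_{k\le K} g^{(k)}\|_{\IDA^{s,q}} \to 0$ modulo $H(\C^n)$; hence the partial sums converge in the quotient.

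The main obstacle is the $f_1$ (holomorphic-obstruction) piece: the individual $g^{(k)}_1$ need not converge as functions — only modulo $H(\C^n)$ — so one cannot simply sum them pointwise. The correct device is to work with the $\overline\partial$-data $\overline\partial g^{(k)}_1$, which \emph{do} live in an honest $L^s$-type space ($\|M_{q,r}(\overline\partial g^{(k)}_1)\|_{L^s}$), sum those, and then reconstruct a representative of the limit by solving $\overline\partial$ locally and reassembling via the partition of unity as in the proof of Theorem~\ref{BDA-equivalence}; uniform control of the Henkin solution operator on balls of a fixed radius (the constant in \eqref{Henkin-solution} being independent of $z$) is what makes the reassembled representative lie in $\IDA^{s,q}$ with the right norm bound. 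A minor technical point to dispatch is measurability/local integrability of the reconstructed $f$, which follows since both $f_1$ and $f_2$ are built as $L^q_{\loc}$-limits.
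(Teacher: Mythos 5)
Your overall architecture matches the paper's: verify the norm axioms, split each term via the canonical decomposition, handle the $f_2$-parts by direct $L^q_{\loc}$ convergence, and handle the $f_1$-parts through their $\overline\partial$-data. (Using the absolutely-convergent-series criterion instead of a Cauchy sequence, as the paper does, is an equivalent and purely cosmetic variation.) The norm verification and the $f_2$-half of your argument are fine.

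The gap is in the reconstruction of $f_1$. You correctly observe that only the forms $\sum_{k\le K}\overline\partial g^{(k)}_1$ converge, to some $\overline\partial$-closed $S\in L^s_{0,1}$, but your device for producing a global function $f_1$ with $\overline\partial f_1=S$ --- ``solving $\overline\partial$ locally via the Henkin solution and reassembling via the partition of unity as in the proof of Theorem~\ref{BDA-equivalence}'' --- does not work as stated. In the converse direction of Theorem~\ref{BDA-equivalence} the function $f_1$ is given globally in advance, and the local Henkin solution is used only to \emph{estimate} $G_{q,\tau}(f_1)$; nothing there glues local $\overline\partial$-solutions into a global one. If $u_j$ solves $\overline\partial u_j=S$ on $B(a_j,t)$, then $\overline\partial\bigl(\sum_j\psi_j u_j\bigr)=S+\sum_j u_j\,\overline\partial\psi_j$, and the error term $\sum_j(u_j-u_{j_0})\overline\partial\psi_j$ is a nontrivial $\overline\partial$-closed form built from the holomorphic discrepancies on overlaps; removing it requires a further \v{C}ech--Dolbeault argument that you have not supplied. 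The paper avoids this entirely by invoking the \emph{global} weighted integral solution operator of Lemma~\ref{hankel-and-d-bar}: it sets $f_1=A_\phi(S)$ with the auxiliary weight $\phi(z)=\frac12|z|^2$, verifies $\overline\partial f_1=S$ weakly by passing to the limit of $A_\phi(\overline\partial f_{m,1})$ against test functions, and only then uses the local Henkin solutions together with the local Bergman projections $P_{z,r}$ to estimate $G_{q,\tau}(f_{m,1}-f_1)$. A secondary point: $A_\phi(S)$ is only in $L^s_{\loc}$ with a weak $\overline\partial$-derivative, not in $C^2(\Cn)$, so one cannot quote Theorem~\ref{BDA-equivalence} verbatim for the limit as you propose; the paper reruns the relevant estimates directly. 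Supplying a global solution of $\overline\partial u=S$ with the right local estimates is the one missing ingredient; once you have it, the rest of your outline goes through.
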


\begin{proof}
Obviously $H(\C^n)\subset \IDA^{s,q}$. Now given $f\in \mathrm{IDA}^{s, q}$ and $h\in H(\Cn)$,    $G_{q,  r}(f) =  G_{q, r}(f+h)$.  This means that the norm in~\eqref{e:quotient norm} is well defined on $\mathrm{IDA}^{s, q}/H(\Cn)$.  If $\|f\|_{\IDA^{s, q}}=0$,  then $G_{q, r}(f)(z)= 0$ in $\Cn$. By Lemma \ref{G-function}, $f\in H(B(z, r))$ and hence $f\in H(\Cn)$.

Let $f_1, f_2\in \mathrm{IDA}^{s, q}$ and $z\in\C^n$. According to Lemma \ref{G-function}, there are functions $h_j$ holomorphic in $B(z,r)$ such that
$$
	M_{q, r}(f_j-h_j)(z)= G_{q, r}(f_j)(z) \,\, \textrm{ for }\, j=1, 2.
$$
Then, since
$$
	M_{q, r}\left((f_1+f_2)-(h_1+h_2)\right)(z) 
	\le M_{q, r}\left(f_1 - h_1\right)(z) +M_{q, r}\left(f_2 - h_2\right)(z),
$$
we have
$$
      G_{q, r}(f_1+f_2)(z)\le G_{q, r}(f_1)(z)+ G_{q, r}(f_2)(z)\,\, \textrm{ for }\,\, z\in \C^n.
$$
Hence, $\|f_1+f_2\|_{\IDA^{s, q}}\le \|f_1 \|_{\IDA^{s, q}}+ \|f_2\|_{\IDA^{s, q} }$.  In addition, $\|f\|_{\IDA^{s, q}}\ge 0$ and $\|af\|_{\IDA^{s, q}}=|a|\|f\|_{\IDA^{s, q}}$ for $a\in \mathbb C$. Therefore, $\|\cdot\|_{\IDA^{s, q}}$ induces a norm on $\mathrm{IDA}^{s, q}/H(\C^n)$.

It remains to prove that the norm is complete. Suppose that $\{f_m\}_{m =1}^\infty$ is a Cauchy sequence in
$$
     \|\cdot\|_{\IDA^{s, q}} =  \|  G_{q, 1}(\cdot)\|_{L^s}.
$$
According to Corollary \ref{r independence}, we may assume that $\{f_m\}_{m =1}^\infty$ is a Cauchy sequence in $ \|  G_{q, r}(\cdot)\|_{L^s}$ with $r>0$ fixed. We now embark on proving that, for some  $f\in \mathrm{IDA}^{s, q}$, $\lim_{m\to \infty} \| G_{q, r/2}(f_m-f)\|_{L^s}=0$, which implies $\{f_m\}_{m=1}^\infty$ converges  to some $f\in \mathrm{IDA}^{s, q}$ in $\|\cdot\|_{\IDA^{s,q}}$-topology.  For this purpose, let $\{a_j\}_{j=1}^\infty$ be some $t=r/4$-lattice.  We decompose each $f_m$ similarly to \eqref{P-decompositon} as follows
$$
	f_{m, 1}= \sum_{j=1}^\infty P_{a_j, r}(f_m) \psi_j\quad {\rm and}\quad f_{m,2} = f_m-f_{m,1},
$$
where $\{\psi_j\}_{j=1}^\infty $ is the partition of unity subordinate to  $\{B(a_j, r/4)\}_{j=1}^\infty$ as in (\ref{psi-1}). It follows from Corollary~\ref{proj-decomp} that
\begin{align*}
	M_{q, r/8}(f_{m, 2}-f_{k, 2})(z)^s
	&= M_{q, r/8}\left((f_m-f_k) - \sum_{j=1}^\infty  P_{a_j, t}(f_m-f_k))\psi_j\right)(z)^s\\
	&\le C G_{q, r/2}(f_{m}-f_{k})(z)^s\\
	&\le C \int_{B(z, r/2)} G_{q, r }(f_{m}-f_{k})(\xi)^s dv(\xi).
\end{align*}
This implies that $\{f_{m,2}\}_{j=1}^\infty$ converges to some function $f_2$ in $L^q_{\mathrm{loc}}$-topology. In addition, by Lemma~\ref{Project}, we have
$$
	M_{q, r/2}\left(f_{m, 2}-f_{k, 2}- \mathrm P_{z, r}(f_{m, 2}-f_{k, 2})\right)(z) \le C G_{q,r}(f_{m, 2}-f_{k, 2})(z).
$$
Letting $k\to \infty$ and applying Fatou's Lemma, we get
\begin{align*}
	G_{q, r/2}(f_{m, 2}-f_{ 2})(z)^s
	&\le  M_{q, r/2}\left(f_{m, 2}-f_{  2}- \mathrm P_{z, r}(f_{m, 2}-f_{  2})\right)(z)^s\\
	&\le C \liminf_{k\to \infty}  G_{q,r}(f_{m, 2}-f_{k, 2})(z)^s.
\end{align*}
Integrate both sides over $\C^n$ and apply Fatou's lemma again to obtain the estimate
$$
	\int_{\Cn} G_{q, r/2}(f_{m, 2}-f_{2})^s \mathrm d v
	\le C   \liminf_{k\to \infty} \|f_{m, 2}- f_{k, 2}\|_{\mathrm{IDA}^{s, q}}.
$$
Therefore,
\begin{equation}\label{f-m-2}
	\lim_{m\to \infty} \| f_{m, 2}-f_2\|_{\IDA^{s, q}} =0.
\end{equation}

Next we consider $\{f_
{m, 1}\}_{m=1}^\infty$. Applying the estimate \eqref{pro-decomp-a} to $f_m-f_k$,
\begin{equation}\label{d-bar-est}
	\left |\overline \partial \left( f_{m, 1}- f_{k, 1} \right)(z)\right|
	\le C G_{q, r/2}\left( f_{m}- f_{k} \right)(z).
\end{equation}
Hence,  $  \{\overline \partial  f_{m, 1}\}_{m=1}^\infty$ is a Cauchy sequence in $ L^s_{0, 1}$ (see~\eqref{norm of differential form}). We may assume $\overline \partial  f_{m, 1}\to S=\sum_{j=1}^n S_j \mathrm{d} \overline z_j$ under the $ L^s_{0, 1}$-norm. Since $\overline \partial ^2=0$, $\overline \partial  f_{m, 1}$ is trivially $\overline \partial $-closed, and so, as the $ L^s_{0, 1}$ limit of $\{\overline \partial  f_{m, 1}\}_{m=1}^\infty$, $S$ is also $\overline \partial $-closed weakly. Let $\phi(z)= \fr 12 |z|^2$  and $g=1\in \Gamma$, and define
$$
	f_1(z) =  A_{\phi}(S), \, \textrm{ and } \, f_{m, 1}^*
	= A_{\phi}(\overline \partial f_{m, 1}).
$$
Then, by Lemma \ref{hankel-and-d-bar},
$$
	f_1, \, \, f_{m, 1}^* \in L^s\left(\phi\right )\subset L^s_{\mathrm{loc}}, \, \, 
	\overline \partial  f_{m, 1}^* = \overline \partial  f_{m, 1},
$$
and $\{f_{m, 1}^*\}_{m=1}^\infty$ converges to $f_1$ in $L^s\left(\phi\right)$. Therefore, for $\psi \in C^\infty_c(\Cn)$ (the family of all $C^\infty$ functions with compact support) and $j=1, 2, \cdots, n$, it holds that
\begin{align*}
	- \left\langle  f_1,  \fr { \partial \psi}{  \partial  z_j}  \right\rangle_{L^2} 
	&= -\lim_{m\to \infty} \left\langle  f_{m, 1}^*,  \fr { \partial \psi}{\partial z_j} \right\rangle_{L^2}
	=\lim_{m\to \infty} \left\langle  \fr { \partial f_{m, 1}^*}{ \partial  \overline z_j}, \psi \right\rangle_{L^2}\\
	&=\lim_{m\to \infty} \left\langle  \fr { \partial f_{m, 1}}{ \partial  \overline z_j} ,  \psi \right\rangle_{L^2}= \left\langle S_j ,  \psi \right\rangle_{L^2}.
\end{align*}
Hence, $\overline \partial f_1 = S$ weakly. Then for  ${\mathrm H}_{B(z, r)}(\overline \partial f_{m, 1}- S)$, the Henkin solution to the equation $\overline \partial u =\overline \partial f_{m, 1} - S$ on   $B(z, r)$, (\ref{Henkin-solution}) gives
\begin{equation}\label{Henkin-a}
	\|{\mathrm H}_{B(z, r)}(\overline \partial f_{m, 1} - S)\|_{L^q(B(z, r), {\mathrm d}v)}
	\le C \|\overline \partial f_{m, 1}- S\|_{L^q(B(z, r), {\mathrm d}v)}.
\end{equation}
In addition, according to~\eqref{d-bar-M-2}, it holds that
$$
	(f_{m, 1}- f_{ 1}) - {\mathrm P} _{z, r} (f_{m, 1}- f_{ 1})
	=  {\mathrm H}_{B(z, r)}(\overline \partial f_{m, 1}- S) -  {\mathrm P} _{z, r}\left( {\mathrm H}_{B(z, r)}(\overline \partial f_{m, 1}- S)\right)
$$
on $B(z, r)$. Therefore, by  (\ref{B-projection-1}), (\ref{B-projection-2}), and (\ref {Henkin-a})  we have
\begin{equation}\label{d-bar-m}
\begin{split}
	& \|(f_{m, 1}- f_{ 1}) - {\mathrm P} _{z, r} (f_{m, 1}- f_{ 1})\|_{L^q(B(z, r/2),\, dv)}^q\\
	&=  \| {\mathrm H}_{B(z, r)}(\overline \partial f_{m, 1}- S)-  {\mathrm P} _{z, r}\left( {\mathrm H}_{B(z, r)}(\overline \partial f_{m, 1}- S)\right )\|_{L^q(B(z, r/2),\, dv)}^q\\
	&\le  C   \| {\mathrm H}_{B(z, r)}(\overline \partial f_{m, 1}- S)
   \|_{L^q(B(z, r),\, dv)}^q\\
   &\le  C  \|\overline \partial f_{m, 1}- S\|_{L^q(B(z, r), {\mathrm d}v)}^{q}.
\end{split}
\end{equation}
Since $S=\lim_{k\to \infty}   \overline \partial f_{k, 1}$ in $L^s_{0, 1}$, by Fatou's lemma,
\begin{equation}\label{d-bar-z}
\begin{split}  
	\|\overline \partial f_{m, 1}- S\|_{L^q(B(z, r), {\mathrm d}v)}^{q}
	&\le C \liminf_{k\to \infty} \| \overline \partial \left( f_{m, 1} -f_{k, 1}\right)  \|_{L^q(B(z, r), dv)}^q\\
	&\le  C \liminf_{k\to \infty} G_{q, 2r}( f_{m, 1} -f_{k, 1})(z)^{q},
\end{split}
\end{equation}
where the last inequality follows from \eqref{d-bar-est}. We combine \eqref{d-bar-m} and \eqref{d-bar-z} to get
\begin{multline*}
	\|(f_{m, 1}- f_{ 1}) - {\mathrm P} _{z, r} (f_{m, 1}- f_{ 1})\|_{L^q(B(z, r/2), dv)}^s\\
	\le  C \liminf_{k\to \infty} G_{q, 2r}( f_{m, 1} -f_{k, 1})(z)^s.
\end{multline*}
Integrating both sides over $\C^n$ with respect to ${\mathrm d}v$ and applying Fatou's lemma once more gives the estimates
\begin{equation}\label{d-bar-2}
\begin{split}
	\|f_{m, 1}- f_{ 1}\|_{\IDA^{s, q}}^s 
	&\le C \int_{\Cn}\left \|(f_{m, 1}- f_{ 1}) - {\mathrm P} _{z, r} (f_{m, 1}- f_{ 1})\right \|_{L^q(B(z, r/2))}^s{\mathrm d}v\\
	&\le C\int_{\Cn} \liminf_{k\to \infty} G_{q, 2r}( f_{m, 1} -f_{k, 1})^s {\mathrm d}v\\
	&\le  C \liminf_{k\to \infty}\|f_{m, 1} -f_{k, 1}\|^{s}_{\IDA^{s, q}}.
\end{split}
\end{equation}
Therefore, $\lim_{m\to \infty} \|f_{m, 1}- f_1\|_{\IDA^{s, q}}=0$. Set $f=f_1+f_2\in L^q_{\mathrm{loc}}$. From (\ref{f-m-2}) and (\ref{d-bar-2}) it follows that
$$
	\lim_{m\to \infty} \| f_{m}-f \|_{\IDA^{s, q}} 
	\le \lim_{m\to \infty}\left( \| f_{m, 1}-f_1\|_{\IDA^{s, q}}+ \| f_{m, 2}-f_2\|_{\IDA^{s, q}}\right)=0,
$$
which completes the proof of the completeness and of the theorem.
\end{proof}

\begin{corollary}\label{bda-vad}
Let $1\le q<\infty$. With the norm induced by $\|\cdot\|_{\BDA^q}$, the quotient space $\mathrm{BDA}^q/H(\Cn)$ is a Banach space and $\mathrm{VDA}^q$
is a closed subspace of $\mathrm{BDA}^q$.
\end{corollary}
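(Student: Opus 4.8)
The plan is to deduce Corollary~\ref{bda-vad} directly from Theorem~\ref{complete} and Corollary~\ref{VDA-1}, so that almost no new work is required. For the first assertion, we simply specialize the proof of Theorem~\ref{complete} to the case $s=\infty$: the statement ``$\mathrm{BDA}^q/H(\C^n)$ is a Banach space with the norm induced by $\|\cdot\|_{\BDA^q}$'' is exactly the $s=\infty$ instance of the completeness argument. The only point to check is that every step of that proof remains valid when the $L^s$-norms are replaced by $L^\infty$-norms (i.e.\ suprema over $z\in\C^n$): the Fatou-lemma passages become monotone/supremum limit arguments, the decomposition $f_m=f_{m,1}+f_{m,2}$ via \eqref{P-decompositon} and Corollary~\ref{proj-decomp} goes through verbatim, and the Henkin-solution estimate \eqref{Henkin-solution} is already stated for all $1\le q<\infty$ and does not involve $s$. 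Thus $\{f_m\}$ Cauchy in $\|\cdot\|_{\BDA^q}$ splits into a part $\{f_{m,2}\}$ converging uniformly in the $M_{q,r}(\cdot)$-sense and a part $\{f_{m,1}\}$ whose $\bar\partial$-derivatives converge in $L^\infty_{0,1}$, and the same Berndtsson--Andersson operator $A_\phi$ reconstructs the limit $f_1$; one concludes $\lim_m\|f_m-f\|_{\BDA^q}=0$.

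For the second assertion, that $\mathrm{VDA}^q$ is a closed subspace of $\mathrm{BDA}^q$, I would argue as follows. First, $\mathrm{VDA}^q\subset\mathrm{BDA}^q$ is immediate from the definition (if $G_{q,1}(f)(z)\to 0$ as $z\to\infty$ then $G_{q,1}(f)$ is bounded). To see closedness, suppose $f_m\in\mathrm{VDA}^q$ and $f_m\to f$ in $\|\cdot\|_{\BDA^q}$, i.e.\ $\sup_{z}G_{q,1}(f_m-f)(z)\to 0$. By subadditivity of $G_{q,1}$ (established in the proof of Theorem~\ref{complete}: $G_{q,1}(f)(z)\le G_{q,1}(f-f_m)(z)+G_{q,1}(f_m)(z)$), we get for every $\varepsilon>0$ an $m$ with $\sup_z G_{q,1}(f-f_m)(z)<\varepsilon$, and then for $|z|$ large, $G_{q,1}(f)(z)\le\varepsilon+G_{q,1}(f_m)(z)$; letting $z\to\infty$ and using $f_m\in\mathrm{VDA}^q$ gives $\limsup_{z\to\infty}G_{q,1}(f)(z)\le\varepsilon$, hence $f\in\mathrm{VDA}^q$. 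This is the standard ``uniform limit of functions vanishing at infinity vanishes at infinity'' argument, transported through the seminorm $G_{q,1}$.

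There is essentially no serious obstacle here; the content is entirely contained in Theorem~\ref{complete} and Corollary~\ref{VDA-1}. The one item that deserves an explicit remark in the write-up is that in Theorem~\ref{complete} the hypothesis was $1\le s<\infty$, so one must confirm that the argument is robust under $s=\infty$; since every use of $L^s$ there was either through the Henkin estimate \eqref{Henkin-solution} (independent of $s$), through Lemma~\ref{hankel-and-d-bar}(A) (valid for $1\le p\le\infty$), through the local Bergman-projection bounds \eqref{B-projection-1}--\eqref{B-projection-2}, or through Fatou's lemma (which has a trivial supremum analogue), this is routine. Accordingly I would phrase the proof as: ``The first claim is the case $s=\infty$ of Theorem~\ref{complete}, whose proof carries over verbatim with $L^\infty$ in place of $L^s$ and suprema in place of integrals. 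For the second claim, use the subadditivity of $G_{q,r}$ noted above together with the elementary fact that a uniform limit of functions tending to $0$ at infinity tends to $0$ at infinity.''
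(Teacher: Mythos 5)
Your proposal is correct and follows the same route as the paper, which likewise proves the first claim by observing that the proof of Theorem~\ref{complete} carries over to $s=\infty$ and dismisses the second claim as provable ``in a standard way'' --- your $\varepsilon$-argument via the subadditivity of $G_{q,1}$ is exactly that standard argument. Your explicit check that the $L^s$-to-$L^\infty$ substitutions (Fatou's lemma becoming a supremum inequality, the Henkin and Bergman-projection estimates being $s$-independent) cause no trouble is a useful addition but does not change the approach.
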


\begin{proof}
The proof of Theorem \ref{complete} works for $s=\infty$, so $\mathrm{BDA}^q/H(\Cn)$ is a Banach space in $\|\cdot\|_{\mathrm{BDA}^q}$.  That $\mathrm{VDA}^q$
is a closed subspace of $\BDA^q$ can be proved in a standard way.
\end{proof}

\section{Proof of Theorem~\ref{main1}}\label{hankel-A}

Given two $F$-spaces $\mathrm X$ and $\mathrm Y$, we write $B(\mathrm X)$ for the unit ball of $\mathrm X$.
 A linear operator $\mathrm T$ from $\mathrm X$ to $\mathrm Y$ is bounded (or compact) if ${\mathrm T}(B(\mathrm X))$ is bounded (or relatively compact) in $\mathrm Y$. The collection of all bounded  (and compact)  operators from $\mathrm X$ to $\mathrm Y$ is denoted by ${\mathcal B}(  \mathrm X, \mathrm Y)$ (and by ${\mathcal K}(\mathrm X, \mathrm Y)$ respectively). We use $\|{\mathrm T}\|_{{\mathrm X}\to {\mathrm Y}} $ to denote the corresponding  operator norm. In particular, we recall that when $0<p<1$, the Fock space $F^p(\varphi)$ with the metric given by $d(f,g)=\|f-g\|^p_{p, \varphi}$ is an $F$-space.

To deal with the boundedness and compactness of Hankel operators, we need an additional result involving positive measures and their averages. More precisely, given a positive Borel measure $\mu$ on $\Cn$ and $r>0$, we write $\widehat{\mu}_r(z)=\mu(B(z, r))$. Notice, in particular, $\widehat{\mu}_r$ is a constant multiple of the averaging function induced by the measure $\mu$.

\begin{lemma}\label{integral-est}  Suppose $0<p\le1$ and $r>0$. There is a constant $C$ such that, for  $\mu$  a positive Borel measure on $\Cn$, $\Omega$  a domain in $\Cn$, and $g\in H(\Cn)$, it holds that
$$
	\left(\int_{\Omega} \left | g(\xi) e^{-\varphi(\xi)}\right| d\mu(\xi)\right)^p
	\le  C \int_{\Omega^+_r} \left | g(\xi) e^{-\varphi(\xi)}\right|^p \widehat{\mu}_r(\xi)^p dv(\xi),
$$
where $\Omega^+_r = \bigcup_{\{z\in \Omega\}} B(z, r)$.
\end{lemma}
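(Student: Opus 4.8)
The plan is to localize the integral over $\Omega$ by means of a lattice, to estimate each piece with the submean inequality of Lemma~\ref{submean-value}, and then to reassemble the pieces using the finite overlap property \eqref{lattice} together with the elementary inequality $(\sum_k t_k)^p\le\sum_k t_k^p$, which holds for $0<p\le 1$ and nonnegative $t_k$. Throughout one may assume that the right-hand side is finite (otherwise there is nothing to prove) and that $g\not\equiv 0$.

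First I would set $\delta=r/3$ and fix a $\delta$-lattice $\{a_k\}$, so that the balls $B(a_k,\delta)$ cover $\Cn$ while $\sum_k\chi_{B(a_k,2\delta)}\le N$ with $N$ depending only on $n$. Put $I=\{k:B(a_k,\delta)\cap\Omega\neq\emptyset\}$ and, for each $k\in I$, choose $z_k\in\Omega\cap B(a_k,\delta)$; then $B(a_k,2\delta)\subset B(z_k,3\delta)=B(z_k,r)\subset\Omega^+_r$. Since $\Omega\subset\bigcup_{k\in I}B(a_k,\delta)$, bounding $|g e^{-\varphi}|$ by its supremum on each ball gives
\begin{equation*}
\int_{\Omega}|g e^{-\varphi}|\,d\mu\le\sum_{k\in I}\mu\big(B(a_k,\delta)\big)\,\sup_{B(a_k,\delta)}|g e^{-\varphi}|,
\end{equation*}
and raising this to the power $p$ distributes the sum over the summands.

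The core step is then to bound a single summand $\mu\big(B(a_k,\delta)\big)^p\big(\sup_{B(a_k,\delta)}|g e^{-\varphi}|\big)^p$ by $C\int_{B(a_k,2\delta)}|g e^{-\varphi}|^p\,\widehat\mu_r^p\,dv$. For the holomorphic factor, Lemma~\ref{submean-value} on balls of radius $\delta$ gives, for every $\xi\in B(a_k,\delta)$,
\begin{equation*}
|g(\xi)e^{-\varphi(\xi)}|^p\le C\int_{B(\xi,\delta)}|g e^{-\varphi}|^p\,dv\le C\int_{B(a_k,2\delta)}|g e^{-\varphi}|^p\,dv,
\end{equation*}
so the same bound holds for the supremum; note this is a purely local estimate, hence it applies to any $g\in H(\Cn)$ and not merely to elements of $F^p(\varphi)$. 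For the measure factor, if $\xi\in B(a_k,2\delta)$ then $B(a_k,\delta)\subset B(\xi,3\delta)=B(\xi,r)$, so $\mu\big(B(a_k,\delta)\big)^p\le\widehat\mu_r(\xi)^p$; since the left-hand side is constant on $B(a_k,2\delta)$, we may pull $\widehat\mu_r(\xi)^p$ into the integrand, which yields the claimed bound for the summand.

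Finally I would sum over $k\in I$ and invoke the finite overlap $\sum_k\chi_{B(a_k,2\delta)}\le N$ together with $\bigcup_{k\in I}B(a_k,2\delta)\subset\Omega^+_r$ to conclude. The only genuinely delicate point is the bookkeeping of scales: $\delta$ must be small enough that a ball of radius $2\delta$ centred near $\Omega$ still lies in $\Omega^+_r$ and, simultaneously, that $B(a_k,\delta)\subset B(\xi,r)$ for every $\xi\in B(a_k,2\delta)$, so that the holomorphic and the measure estimates can be combined over one common ball; the choice $\delta=r/3$ meets both requirements, and everything else is routine.
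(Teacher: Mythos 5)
Your proof is correct and follows essentially the same route as the paper's: cover $\Omega$ by lattice balls, apply $(\sum_k t_k)^p\le\sum_k t_k^p$, use the (purely local) submean value inequality for $|g e^{-\varphi}|^p$ and the pointwise comparison $\mu(B(a_k,\delta))\le\widehat{\mu}_r(\xi)$ on the doubled ball, then reassemble via the finite overlap property. The only differences are cosmetic (lattice radius $r/3$ versus the paper's $r/4$, and where exactly the supremum is taken), and your explicit remark that Lemma~\ref{submean-value} applies to any $g\in H(\Cn)$ is a point the paper uses tacitly.
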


\begin{proof}
Let $\{a_j\}_{j=1}^\infty$ be an $r/4$-lattice. Notice that
$$
	\widehat{\mu}_{r/4}(a_j)
	\le C \inf_{w\in B(a_j, r/2)} \widehat{\mu}_{r}(w)
$$
for all $j\in\N$ and $(a+b)^p\le a^p+b^p$ for $a, b\ge 0$. Then
\beqm
	&& \left(\int_{\Omega} \left | g(\xi) e^{-\varphi(\xi)}\right| d\mu(\xi)\right)^p\\
	&\le&  \sum_{j=1}^\infty \left(\int_{B(a_j, r/4)\cap \Omega} \left | g(\xi) e^{-\varphi(\xi)}\right| d\mu(\xi) \right)^p\\
	&\le& C \sum_{\{j:\, B(a_j, r/4)\cap \Omega \neq \emptyset\} }   \sup_{\xi\in B(a_j, r/4)\cap \Omega} \left | g(\xi) e^{-\varphi(\xi)}\right|   ^p \widehat{\mu}_{r/4}(a_j)^p\\
	&\le& C \sum_{\{j:\, B(a_j, r/4)\cap \Omega \neq \emptyset\} }   \widehat{\mu}_{r/4}(a_j)^p \int_{B(a_j, r/2 )} \left | g(\xi) e^{-\varphi(\xi)}\right| ^p    dv(\xi)\\
	&\le& C \sum_{\{j:\, B(a_j, r/4)\cap \Omega \neq \emptyset\} } \int_{B(a_j, r/2 )} \left | g(\xi) e^{-\varphi(\xi)}\right| ^p \widehat{\mu}_{r }(\xi)^p   dv(\xi)\\
	&\le& C  \int_{\Omega^+_r } \left | g(\xi) e^{-\varphi(\xi)}\right| ^p \widehat{\mu}_{r}(\xi) ^p  dv(\xi),
\eqm
which completes the proof.
\end{proof}

\begin{remark}\label{compactness remark} To prove compactness of Hankel operators on spaces that are not necessarily Banach spaces, we use the following result. For $0<p, q<\infty$, $H_f : F^p(\varphi)\to L^q(\varphi)$ is compact if and only if
$$
	\lim_{m\to \infty}\|H_f(g_m)\|_{q, \varphi} =0
$$
for any sequences $\{g_m\}_{m=1}^\infty$ in $B(F^p(\varphi))$ satisfying
$$
	\lim_{m\to \infty} \sup_{w\in E}|g_m(w)|=0
$$
for compact subsets $E$ in $ \C^n$.

Necessity is trivial. To prove sufficiency, we notice that $B(F^p(\varphi))$ is a normal family, so for any sequence $\{g_m\}_{m=1}^\infty \subset B(F^p(\varphi))$, there exist a holomorphic function $g_0$ on $\C^n$ and a subsequence $ \{g_{m_j}\}_{j=1}^\infty$ such that
$$
	\lim_{j\to \infty} \sup_{w\in E}|g_{m_j}(w)-g_0(w)|=0.
$$
This and Fatou's Lemma imply that $g_0\in B(F^p(\varphi))$, and hence by the hypothesis, we get
$$
	\lim_{j\to \infty} \| H_f(g_{m_j})-H_f(g_0)\|_{q, \varphi}
	= \lim_{j\to \infty} \| H_f(g_{m_j}-g_0)\|_{q, \varphi}=0.
$$
Therefore, $H_f\left(B(F^p(\varphi))\right)$ is sequential compact in $L^q(\varphi)$, that is, the Hankel operator $H_f:F^p(\varphi)\to L^q(\varphi)$ is compact.
\end{remark}

\subsection{The case $0<p\leq q<\infty$ and $q\geq 1$}\label{s:4.1}
\begin{proof}[Proof of Theorem \ref{main1} {\rm (a)}]
By \eqref{basic-est-a}--\eqref{nor-bergman},
\begin{equation}\label{norm-kernel}
 \|k_z\|_{p, \varphi} \le C, \,\,  \sup_{\xi\in B(z, r_0)} |k_z(\xi)| e^{-\varphi(\xi)}\ge C\ {\rm and }\,
  \lim_{z\to \infty}  \sup_{w\in E}|k_z(w)| =0
\end{equation}
  for any compact subset $E\subset \Cn$.
As in the proof of Theorem~4.2 of \cite{HL19}, there is an $r_0$ such that for all $z\in \C^n$, we have
\begin{equation}\label{hankel-on-k}
\begin{split}
	\| H_f(k_z)\|_{q, \varphi}^q &\ge   \int_{B(z, r_0)} \left
	|f k_z -P(fk_z)   \right|^q e^{-q\varphi }dv\\
	&\ge C\fr 1{|B(z, r_0)|} \int_{B(z, r_0)} 
	\left |f  - \fr 1 {k_z}P(fk_z)  \right|^q dv
	\ge C G_{q, r_0}^q (f)(z).
\end{split}
\end{equation}
If  $H_f\in {\mathcal B}(F^p(\varphi),\, L^q(\varphi))$,
\begin{equation}\label{p-less-than-q-1}
\left\|   f \right\|_{\mathrm{BDA}^q}\le C \|H_f\|_{F^p(\varphi)\to L^q(\varphi)}<\infty;
\end{equation}
if  $H_f\in {\mathcal K}(F^p(\varphi),\, L^q(\varphi))$, then $f\in \mathrm{VDA}^q$ because
\begin{equation}\label{compact-G}
 \lim_{z\to \infty} G_{q, r_0}^q (f)(z) \le C    \lim_{z\to \infty} \|H_f(k_z)\|_{q, \varphi}=0.
\end{equation}

Next we prove sufficiency.  Suppose that $f\in \mathrm{BDA}^q$ and decompose  $f=f_1+f_2$ as in \eqref{decomposition-a}. Write $d\mu= |f_2|^q dv$ and $d\nu= |\overline{\partial}f_1| ^q   dv$.  According to Theorem~2.6 of~\cite{HL14} and Corollary~\ref{VDA-1}, both $d\mu$ and $d\nu$ are $(p, q)$-Fock Carleson measures.
We claim that both $f_1 , f_2\in \mathcal S$. Indeed, since $q\ge 1$, we can use Lemma \ref{integral-est} with $\Omega= \C^n$ and the measure $|f_2|dv$ to get
\begin{equation}\label{S-class}
\begin{split}
	\int_{\Cn}  |f_2(\xi) K(\xi, z)| e^{-\varphi(\xi)}dv(\xi)
	&\le C \int_{\Cn}  M_{1, r}( f_2 ) (\zeta) |K (\zeta, z) |e^{-\varphi(\zeta)}dv(\zeta)\\
	&\le C \int_{\Cn}  M_{q, r}( f_2 ) (\zeta) |K (\zeta, z) |e^{-\varphi(\zeta)}dv(\zeta).
 \end{split}
\end{equation}
Since $f\in \mathrm{BDA}^q$, Lemma \ref{basic-docomp} implies that
$$
	\int_{\Cn}  |f_2(\xi) K(\xi, z)| e^{-\varphi(\xi)}dv(\xi)
	\le C \|f\|_{BDA^q} \int_{\Cn}  | K(\xi, z)| e^{-\varphi(\xi)}dv(\xi) <\infty
$$
for $z\in \C^n$. Hence, $f_2\in \mathcal S$, and
so also $f_1=f-f_2\in \mathcal S$ because $f\in \mathcal S$ by the hypothesis. Since the Bergman projection $P$ is bounded on $L^q(\varphi)$ when $q\ge 1$, we have for $g\in\Gamma$,
\begin{align*}
	\| H_{f_2}(g)\|_{q, \varphi}
	&\le  (1+\|P\|_{L^q(\varphi)\to F^q(\varphi) }) \|f_2 g\|_{q, \varphi}\\
	&\le C \|M_{q, r}(f_2)\|_{L^\infty} \|g\|_{q, \varphi}
	\le C \|M_{q, r}(f_2)\|_{L^\infty} \|g\|_{p, \varphi},
\end{align*}
where the second inequality follows from Lemma~\ref{integral-est}. For $H_{f_1}(g)$ with $g\in \Gamma$, Corollary~\ref{d-bar-hankel} shows that $H_{f_1}(g)= A_\varphi(g\overline \partial f_1) -P(A_\varphi(g\overline \partial f_1))$. Lemma \ref{hankel-and-d-bar} implies
\begin{equation}\label{f_1-estimate}
	\|  H_{f_1}(g)\|_{q, \varphi}
	\le C \|g \, |{\overline \partial} f_1|\|_{q, \varphi}\le C \|{\overline \partial} f_1\|_{L^\infty} \| g \|_{q, \varphi}\le C\|{\overline \partial} f_1\|_{L^\infty} \| g \|_{p, \varphi}.
\end{equation}
From the above estimates and the fact that $\Gamma$ is dense in $F^p(\varphi)$, it follows that for $0 < p\le q< \infty$, we have
\begin{equation}\label{p-less-than-q-a}
	\|H_{f}\|_{F^p(\varphi)\to L^q(\varphi)}
	\le C \left\{  \|{\overline \partial} f_1\|_{L^\infty} + \|M_{q, r}(f_2)\|_{L^\infty} \right\}
	\le C \|f\|_{\mathrm{BDA}^q}
\end{equation}
where the latter inequality follows from Lemma~\ref{basic-docomp}.

For compactness, suppose $f\in \mathrm{VDA}^q$ so that  $f=f_1+f_2$  is as (\ref{decomposition-a}).
Notice that both $d\mu=|f_2|^q   dv$   and $d\nu= |\overline{\partial}f_1| ^q   dv$ are   vanishing $(p, q)$-Fock Carleson measures. Let $\{g_m\}$ be a bounded sequence   in $F^p(\varphi)$ converging to zero uniformly on compact subsets of $\Cn$. Then
\begin{align*}
	\|H_{f_2}(g_m)\|_{L^q(\varphi)}
	&\le \|g_m f_2\|_{q, \varphi} +  \|P(g_m f_2)\|_{q, \varphi}\\
	&\le  C \left (\int_{\C}\left |g_m  e^{- \varphi}\right|^q d\mu \right)^{\frac{1}{q}} \to 0
\end{align*}
as $m\to \infty$. To prove $ \lim_{m\to \infty} \|H_{f_1}(g_m)\|_{L^q(\varphi)}  =0$, for each $m$ we pick some $g_m^*\in \Gamma$ so that $\|g_m-g_m^*\|_{p, \varphi}<\fr 1m$. Clearly, $\{g_m^*\}_{m=1}^\infty$ is bounded in $F^p(\varphi)$, and $\lim_{z\to \infty}  \sup_{w\in E}|g_m^*(w)| =0$ for any compact subset $E$. Again by Corollary \ref{d-bar-hankel},
$$
	\|H_{f_1}(g_m^*)\|_{L^q(\varphi) }
	\le C\left\|g_m^*   \overline{\partial}f_1 \right \|_{L^q(\varphi)}
	\le C \| g_m^*\|_{L^q(\Cn, d\nu)} \to 0 \, \textrm{ as }\, m\to \infty.
$$
Therefore, since (\ref{basic-docomp}) guarantees that  $H_{f_1}\in {\mathcal B}(F^p(\varphi), L^q(\varphi))$, it follows that $\lim_{m\to \infty} \| H_{f_1}(g_m)\|_{L^q(\varphi)}  =0$, and so
$$
	H_f=H_{f_1} + H_{f_2}\in {\mathcal K}(F^p(\varphi), L^q(\varphi)).
$$

Finally, it remains to notice that the norm equivalence \eqref{bounded-g} follows from \eqref{p-less-than-q-1} and \eqref{p-less-than-q-a}.
 \end{proof}

\subsection{The case $1\le q<p<\infty$}

We can now prove the case $q<p$ under the assumption that $q\ge 1$.

\begin{proof}[Proof of Theorem \ref{main1} {\rm (b)}] Suppose that $H_f\in {\mathcal B}\left(F^p(\varphi), L^q(\varphi)\right)$.
Because the proof of sufficiency is similar to the implication $(A)\Rightarrow (C)$ of Theorem 4.4 in~\cite{HL19}, we only give the sketch here.

Indeed, take $r_0$ as in~\eqref{norm-kernel}, and set $t= r_0/4$. Let $\{a_j\}_{j=1}^\infty$ be a $t/2$-lattice. By Lemma 2.4 of \cite{HL14}, $\left\| \sum_{j=1}^\infty \lambda_j k_{a_j}   \right\|_{p, \varphi}\le C \|\{\lambda_j\}\|_{l^p}$ for all $\{\lambda_j\}_{j=1}^\infty \in l^p$, where the constant $C$ is independent of $\{\lambda_j\}_{j=1}^\infty$. Let $\{\phi_j\}_{j=1}^\infty$  be the sequence   Rademacher functions on the interval $[0, 1]$. Using the boundedness of $H_f$, we get
\begin{equation}\label{q<p--a}
	\left\|  H_f\left(  \sum_{j=1}^\infty \lambda_j\phi_j(s)  k_{a_j}(\cdot) \right)\right\|_{q, \varphi}
	\le C \|H_f\|_{F^p(\varphi)\to L^q(\varphi)} \left\| \{|\lambda_j|^q \}\right\|_{l^{\frac p q}}^{\frac 1q}
\end{equation}
for $s\in [0, 1]$. On the other hand,
\begin{equation}\label{q<p--b}
	\int_{B(a_j, t)} \left|H_f(k_z)(\xi) e^{-\varphi(\xi)} \right|^q dv(\xi)
	\ge C G_{q, t}(f)(a_j)^q.
\end{equation}
This and Khintchine's inequality yield
$$
	\int_0^1 \left\|  H_f\left(  \sum_{j=1}^\infty \lambda_j\phi_j(s)  k_{a_j}(\cdot) \right)\right\|_{q, \varphi}^q dt
	\ge C \sum_{j=1}^\infty |\lambda_j|^q G_{q, t}(f)(a_j)^q.
$$
Combining this with \eqref{q<p--a} gives
$$
	\sum_{j=1}^\infty |\lambda_j|^q G_{q, t}(f)(a_j)^q
	\le C \|H_f\|_{F^p(\varphi)\to L^q(\varphi)}^q \left\| \{|\lambda_j|^q \}\right\|_{l^{\frac p q}}
$$
for all  $\{|\lambda_j|^q \}_{j=1}^\infty \in l^{\frac p q}$.  By duality with the exponentials $\frac p q$ and its conjugate,
$$
	\sum_{j=1}^\infty G_{q, t}(f)(a_j)^{\frac{pq}{p-q}}
	\le C \|H_f\|_{F^p(\varphi)\to L^q(\varphi)}^{\frac {pq}{p-q}}.
$$
Therefore, by \eqref{P-z-r},
\begin{equation} \label{q-lessthen-p-1}
\begin{split}
	\int_{\C^n} G_{q, t/2}(f)(z)^{\frac{pq}{p-q}}dv(z)
	&\le \sum_{j=1}^\infty\int_{B(a_j, t/2)} G_{q, t/2}(f)(z)^{\frac{pq}{p-q}}dv(z)\\
	&\le C  \|H_f\|_{F^p(\varphi)\to L^q(\varphi)}^{\frac {pq}{p-q}},
\end{split}
\end{equation}
which means that $f\in \mathrm{IDA}^{s, q}$ with the estimate $\left\|   f \right\|_{\mathrm{IDA}^{s, q}}\le C \|H_f\| $.

It should be pointed out that the right hand side of the estimate (4.24) (the analog of \eqref{q-lessthen-p-1} above) in~\cite{HL19} should read $C \|H_f\|_{A^p_\omega \to L^q_\omega}^{\frac {pq}{p-q}}$, and not $C\|H_f\|_{A^p_\omega\to L^q_\omega}$ as stated there.

Conversely, suppose $ f\in \mathrm{IDA}^{s, q}$. As before, decompose $f=f_1+f_2$ as in \eqref {decomposition-a}. From Lemma  \ref{basic-docomp} we know that $\left\| M_{q, r}(f_2) \right\|_{\fr {pq}{p-q}} \le C \left\|   f \right\|_{\mathrm{IDA}^{s, q}}
$. Applying H\"{o}lder's inequality to the right hand side integral in \eqref{S-class} with exponent $\frac{pq}{p-q}$ and its conjugate exponent $t$, since we have $\|K(\cdot, z)\|_{t, \varphi}<\infty$, it follows that
\begin{equation*}
\begin{split}
    \int_{\Cn} &|f_2(\xi) K_z(\xi)| e^{-\varphi(\xi)}dv(\xi)\le C \, \left\| M_{q, r}(f_2) \right\|_{\fr {pq}{p-q}} \cdot \|K_z\|_{t, \varphi}<\infty.
\end{split}
\end{equation*}
This implies $f_2\in \mathcal S$, and so also $f_1 \in \mathcal{S}$.

Now for $d\nu= |\overline{\partial}f_1| ^q dv$, applying H\"{o}lder's inequality again with $\fr p{p-q}$ and its conjugate exponent $p/q$,  we get
\begin{equation}\label{p>q--m}
\begin{split}
	\left\| \widehat{\nu}_r \right\|_{  L^{\fr p{p-q}}}^{\fr p{p-q}} 
	&= C \int_{\Cn }\left\{ {\int_{B(\xi, r)}   |{\overline \partial} f_1(\zeta)| ^q dv(\zeta)} \right\}^{\frac p{p-q}}  dv(\xi)\\
	&\le C \int_{\Cn} dv(\xi)   \int_{B(\xi, r)}   |{\overline \partial} f_1(\zeta)| ^{\fr {pq}{p-q}} \,dv(\zeta)\\
	&\simeq C\int_{\C^n}|{\overline \partial} f_1(\zeta)| ^{\frac{pq}{p-q}} dv(\zeta)<\infty.
\end{split}
\end{equation}
Theorem~2.8 of~\cite{HL14} shows that $\nu$ is a vanishing $(p,q)$-Fock Carleson measure, that is, the multiplier $M_{f_1}: g\mapsto g|{\overline \partial} f_1|$  is compact from  $F^p(\varphi)$ to $L^q(\varphi)$ (see Proposition~\ref{F-C-cpt}). Therefore, by Lemma~\ref{hankel-and-d-bar} (A), $A_\varphi(\cdot \,  \overline \partial f_1 )$ is compact from $F^p(\varphi)$ to $L^q(\varphi)$. Moreover, $\Gamma $ is dense  in $F^p(\varphi)$ and, by  Corollary~\ref{d-bar-hankel}, $H_{f_1}(g)= A_\varphi(g \, \overline \partial f_1 )-P\circ  A_\varphi(g \, \overline \partial f_1 )$ for  $g\in \Gamma $.  Hence,   $H_{f_1}:   F^p(\varphi) \to L^q(\varphi)$ is compact and we obtain the norm estimate
\begin{equation}\label{hankel-f-1}
	\|H_{f_1}\|_{F^p(\varphi)\to L^q(\varphi)} 
	\le C \sup_{\{g\in F^p(\varphi):\, \|g\|_{p,\varphi}\le 1\}} \|A_{\varphi}(g \overline \partial f_1)\|_{q, \varphi} 
	\le C \|{\overline \partial} f_1\|_{\fr {pq}{p-q}}.
\end{equation}
Similarly to \eqref{p>q--m}, using Lemma \ref{basic-docomp}, for $d\mu= |f_2|^q dv$, we get
\begin{align*}
	\left\| \widehat{\mu}_r \right\|_{  L^{\fr p{p-q}}}^{\frac p{p-q}} 
	&=  C \int_{\Cn }\left\{ {\int_{B(\xi, r)}   | f_2(\zeta)| ^q dv(\zeta)}\right\}^{\fr p{p-q}}  dv(\xi)\\
       &=C\left\| M_{q, r}(f_2) \right\|_{\fr {pq}{p-q}}^{\fr {pq}{p-q}}\le C \left\|   f \right\|_{\mathrm{IDA}^{s, q}}^{s}<\infty.
\end{align*}
Hence, $d\mu= |f_2|^q  dv$ is a vanishing $(p, q)$-Fock Carleson measure. It follows from Proposition~\ref{F-C-cpt} that the identity operator
$$
	{\mathrm{I}}: F^p(\varphi) \to L^q(\Cn,  e^{-q\varphi} d\mu)
$$ 
is compact. Using the inequality
\begin{equation}\label{q-lessthen-p-8}
	\|H_{f_2}(g)\|_{q, \varphi}
	\le  C \| f_2 g\|_{q, \varphi}
	=C \|{\mathrm{I}}(g)\|_{L^q(\C,\, e^{-q\varphi} d\mu)},
\end{equation}
we see that $H_{f_2}$ is compact from $F^p(\varphi)$ to $L^q(\varphi)$. 

It remains to notice that the norm equivalence in~\eqref{q-lessthen-p-a} follows from combining the estimates in \eqref {q-lessthen-p-1}, \eqref{hankel-f-1}, and \eqref{q-lessthen-p-8}.
\end{proof}

\begin{remark}
In~\cite{St92}, it was proved that for bounded symbols $f$, the Hankel operator $H_f : F^2 \to L^2$ is compact if and only if
\begin{equation}\label{e:SZ}
	\|(I-P)(f\circ \phi_\lambda)\| \to 0
\end{equation}
as $|\lambda|\to \infty$, where $\phi_\lambda(z) = z+\lambda$.
This characterization was recently generalized to $F^p_\alpha$ with $1<p<\infty$ in~\cite{HV19}. Here we note that, using a generalization of Lemma~8.2 of~\cite{Zh12} to the setting of $\C^n$, one can prove that Stroethoff's result remains true for Hankel operators acting from $F^p_\alpha$ to $L^q_\alpha$ whenever $1\leq p, q<\infty$ even for unbounded symbols.
\end{remark}

\subsection{The case $0<p\le q\le 1$ with bounded symbols} We start with the following preliminary lemma whose proof can be completed with a standard $\varepsilon$ argument.

\begin{lemma}\label{norm-0}
Suppose that $0<p<\infty$,  $h\in L^\infty$ and  $\lim_{z\to \infty} h(z)=0$. Then for any bounded sequence $\{ g_j\}_{j=1}^\infty$ in $L^p_\varphi$ satisfying $\lim_{j\to \infty} g_j(z)=0$ uniformly on compact subsets of $\Cn$, it holds that $\lim_{j\to \infty} \|g_j h\|_{p, \varphi} =0$.
\end{lemma}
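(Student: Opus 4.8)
The plan is to prove Lemma~\ref{norm-0} directly from the definitions using the hypothesis $\lim_{z\to\infty} h(z) = 0$ to handle the ``tail'' and the uniform convergence of $g_j$ on compact sets to handle the ``core''. Fix $\varepsilon > 0$. Since $h \in L^\infty$ and $h(z) \to 0$ as $z \to \infty$, there is a radius $R > 0$ such that $|h(z)| < \varepsilon$ whenever $|z| > R$. Split the integral defining $\|g_j h\|_{p,\varphi}^p$ over $\C^n = B(0,R) \cup (\C^n \setminus B(0,R))$.

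On the exterior region $\C^n \setminus B(0,R)$, we estimate
$$
	\int_{\C^n \setminus B(0,R)} |g_j(z) h(z)|^p e^{-p\varphi(z)}\, dv(z)
	\le \varepsilon^p \int_{\C^n} |g_j(z)|^p e^{-p\varphi(z)}\, dv(z)
	\le \varepsilon^p \sup_j \|g_j\|_{p,\varphi}^p,
$$
which is $\le C\varepsilon^p$ by the assumed boundedness of $\{g_j\}$ in $L^p(\varphi)$, uniformly in $j$. On the compact region $\overline{B(0,R)}$, using $\|h\|_{L^\infty} < \infty$ we get
$$
	\int_{B(0,R)} |g_j(z) h(z)|^p e^{-p\varphi(z)}\, dv(z)
	\le \|h\|_{L^\infty}^p \Big(\sup_{z \in \overline{B(0,R)}} |g_j(z)|\Big)^p \int_{B(0,R)} e^{-p\varphi(z)}\, dv(z),
$$
and the last integral is a finite constant depending only on $R$ and $\varphi$ (since $\varphi$ is continuous). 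By the hypothesis that $g_j \to 0$ uniformly on compact subsets of $\C^n$, the factor $\sup_{z \in \overline{B(0,R)}} |g_j(z)| \to 0$ as $j \to \infty$, so this term tends to $0$. Combining the two estimates gives $\limsup_{j\to\infty} \|g_j h\|_{p,\varphi}^p \le C\varepsilon^p$, and since $\varepsilon > 0$ is arbitrary, $\lim_{j\to\infty} \|g_j h\|_{p,\varphi} = 0$.

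There is no serious obstacle here; the only minor point requiring a touch of care is that the argument works uniformly for all $0 < p < \infty$ without invoking any convexity of the norm (relevant when $p < 1$), but since we work directly with the integral $\|g_j h\|_{p,\varphi}^p$ rather than with a triangle inequality, the $F$-space case is handled on exactly the same footing as the Banach case. One should also note that the finiteness of $\int_{B(0,R)} e^{-p\varphi}\, dv$ uses only continuity of $\varphi$ on the compact ball, so no use of the Hessian condition~\eqref{weights} is needed for this lemma.
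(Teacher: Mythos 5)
Your proof is correct and is essentially the paper's own argument: the paper likewise splits $\|g_jh\|_{p,\varphi}^p$ into the integral over $B(0,R)$ (controlled by uniform convergence of $g_j$ on compacts) and over $\C^n\setminus B(0,R)$ (controlled by $\sup_{|z|>R}|h(z)|$ being small), working directly with the $p$-th power of the integral so that the case $p<1$ needs no separate treatment.
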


\begin{proof}
If $R$ is sufficiently large, there is a $C>0$ such that
\begin{align*}
	\|g_j h\|_{p, \varphi}^p &= \left( \int_{B(0, R)}+ \int_{\Cn \setminus B(0, R)}\right) |g_j(\xi)h(\xi)e^{-\varphi(\xi)}|^pdv(\xi)\\
	&\le \|h\|^p_{L^\infty} \sup_{|\xi|\le R} |g_j(\xi)e^{-\varphi(\xi)}|^p + C \|g_j\|_{p, \varphi}^p \to 0
\end{align*}
as $j\to \infty$.
\end{proof}

\begin{proof}[Proof of Theorem \ref{main1} {\rm (c)}]
Suppose that $f\in \mathcal S$. Then $f\in L^q_{\mathrm {loc}}$ for $0<q\le 1$, and we may decompose    $f=f_1+f_2$   as in~\eqref{decomposition-a} with $t=  r/2$. We claim that, for $g\in \Gamma$,
\begin{equation}\label{f-1-estimath}
	\|H_{f_1}(g)\|^q_{q, \varphi} 
	\le C \int_{\C^n} \left | g(\xi)e^{-\varphi(\xi)}\right|^q
	\left  \|  {\overline \partial}f_1   \right \|_{L^\infty(B(\xi,  r), dv)}^q  dv(\xi)
\end{equation}
and
\begin{equation}\label{f-2-estimath}
	\| H_{f_2}(g)\|^q_{q, \varphi} 
	\le C \int_{\Cn}\left | g(\xi)e^{-\varphi(\xi)}
	\right|^q  M_{1, r}(f_2)(\xi) ^q dv(\xi).
\end{equation}
To estimate  $\|H_{f_1}(g)\|_{q, \varphi}$, we use the representation
$$
	H_{f_1}(g)= A_{\varphi} (g\overline \partial f_1)-P(A_{\varphi} (g\overline \partial f_1)),
$$
(see~\eqref{d-bar-s}), which suggests that we define a measure $d\mu_z$ as follows
$$
	d\mu_z(\xi) =  \left|{\overline \partial}f_1(\xi)\right| \left\{\fr 1{|\xi-z|} +\fr 1{|\xi-z|^{2n-1}} \right\}e^{- m|\xi-z|} dv(\xi).
$$
Then there is a constant $C$ such that, for $w\in \Cn$,
$$
	\int_{B(w, r)}  \left|{\overline \partial}f_1(\xi)\right| \left\{\fr 1{|\xi-z|} +\fr 1{|\xi-z|^{2n-1}} \right\}e^{- m|\xi-z|^2} dv(\xi)
	\le C \int_{B(w, r)} d\mu_z(\xi).
$$
 Also, it is easy to verify that
$$
	\widehat{(\mu_z)}_r(w) 
	\le C \sup_{\eta\in B(w, r)}\left|{\overline \partial}f_1(\eta)\right| e^{- m|w-z|},
$$
where the constant $C$ is independent of $z, w\in \Cn$. Recall that
\begin{multline*}
	A_\varphi(g \overline \partial f_1 )(z)
	= \int_{\Cn} e^{\langle 2 \partial \varphi, z-\xi  \rangle }\\
	\times\sum_{j<n } g(\xi){\overline \partial}f_1(\xi)\wedge \frac{\partial|\xi-z|^2\wedge (2 {\overline \partial} \partial\varphi(\xi))^j \wedge ( {\overline \partial} \partial |\xi-z|^2)^{n-1-j} }{j!|\xi-z|^{2n-2j}}.
\end{multline*}
Therefore, using~\eqref{d-bar-s-V1} and Lemma \ref{integral-est}, we get
\begin{equation}\label{P-A-estimate}
\begin{split}
	\left|A_\varphi(g  \overline \partial f_1 )(z) e^{-\varphi(z)} \right|^q
	&\le   C \left(  \int_{\C^n} \left | g (\xi)e^{-\varphi(\xi)}\right|d\mu_z(\xi)\right)^q\\
	&\le   C\int_{\C^n}\left | g (\xi)e^{-\varphi(\xi)}\right|^q\left\|  {\overline \partial}f_1   \right \|_{L^\infty(B(\xi, r), dv)}^q  e^{- q m|\xi-z|} dv(\xi).
\end{split}
\end{equation}
Fubini's theorem yields
\begin{equation}\label{f-1-estimate}
\begin{split}
	&\|A_\varphi(g  \overline \partial f_1 )  \|_{q, \varphi}^q\\
	&\le  C \int_{\Cn} dv(z) \int_{\C^n} \left | g (\xi)e^{-\varphi(\xi)}\right|^q \left  \|  {\overline \partial}f_1   \right \|_{L^\infty(B(\xi,  r), dv)}^q e^{- qm |\xi-z|}  dv(\xi)\\
	&\le C \int_{\C^n}\left | g (\xi)e^{-\varphi(\xi)}\right|^q \left \|  {\overline \partial}f_1 \right \|_{L^\infty(B(\xi,  r), dv )}^q dv(\xi).
\end{split}
\end{equation}
To deal with $P\left(A_\varphi(g \overline \partial f_1 )\right)$, we use Lemma \ref{basic-est} to obtain positive constants $\theta$ and $C$ so that,  for $z\in \Cn$, we have
\begin{align*}
	&\int_{\Cn} |K(w, z)|e^{-m |\xi-z|} e^{-\varphi(z)} dv(z) \\
	&\le C e^{ \varphi(w)} \int_{\Cn} e^{-m|\xi-z| } e^{-\theta |w-z|} dv(z)\\
	&=  C e^{ \varphi(w)}\left( \int_{\{z: |z-\xi|\ge|z-w|\}} +\int_{\{z: |z-\xi|<|z-w|\}} \right)e^{-m|w-z| } e^{-\theta |\xi-z|} dv(z)\\
	&\le C e^{ \varphi(w)} e^{-\tau |\xi-w|},
\end{align*}
where $\tau=\min\{\theta, m\}$. Therefore, (\ref{P-A-estimate}) and   Fubini's theorem yield
\begin{align*}
	&\left|P\left (A_\varphi(g  \overline \partial f_1 )\right)(w)\right|\\
	&\le  C \int_{\Cn} \left | g (\xi) e^{-\varphi(\xi)}\right| \left\|  {\overline \partial}f_1   \right \|_{L^\infty(B(\xi,  r/2), dv)} dv(\xi)\\
	&\qquad\qquad\times\int_{\Cn} |K(w, z)|e^{-\theta |\xi-z|} e^{-\varphi(z)} dv(z)\\
	&\le  C e^{\varphi(w)} \int_{\Cn} \left | g  (\xi)e^{-\varphi(\xi)}\right|\left\|  {\overline \partial}f_1   \right \|_{L^\infty(B(\xi,  r/2 ), dv)}  e^{-\tau|\xi-w|}dv(\xi).
\end{align*}
Lemma  \ref{integral-est} again gives
$$
	\left\|P\left (A_\varphi(g  \overline \partial f_1 )\right)(w)\right\|_{q, \varphi}^q 
	\le C  \int_{\C^n} \left | g (\xi)e^{-\varphi(\xi)}\right|^q
       \left\|{\overline \partial}f_1 \right \|_{L^\infty(B(\xi, r), dv)}^q  dv(\xi).
$$
Combining this and \eqref{f-1-estimate}, we get \eqref{f-1-estimath}.

For \eqref{f-2-estimath}, notice first that
\begin{equation}\label{f-2-multip}
	\|f_2 g\|_{q, \varphi}^q
	\le C\int_{\C^n} \left | g(\xi)e^{-\varphi(\xi)}\right|^q M_{q, r}^q (f_2)(\xi) dv(\xi),
\end{equation}
and, by Lemma \ref{integral-est} with the measure $M_{1, r/2}(f_2) dv$, we have
\begin{equation}\label{q<1-estimate}
\begin{split}
	|P(f_2 g)(z)|^q 
	&\le C \left(  \int_{\Cn} \left|g(\xi)K(z, \xi) e^{-2\varphi(\xi)}\right|  M_{1, r/2}(f_2)(\xi) dv(\xi) \right)^q\\
	&\le C   \int_{\Cn} \left|g(\xi)K(z, \xi) e^{-2\varphi(\xi)}\right|^q  M_{1, r}(f_2 )(\xi)^q  dv(\xi).
\end{split}
\end{equation}
Integrating both sides of \eqref{q<1-estimate} against $e^{-q\varphi } dv$ over $\C^n$ and using~\eqref{nor-bergman}, we get
\begin{equation}\label{f-2-estimath-part2}
	\|P(f_2 g)\| _{q, \varphi}^q
	\le C   \int_{\C^n} \left|g(\xi) e^{-\varphi(\xi)}\right|^q  M_{1, r}(f_2 )(\xi)^q dv(\xi).
\end{equation}
This and \eqref{f-2-multip} imply \eqref{f-2-estimath}.

Now we suppose that $f\in L^\infty$ and  $0<p\le q<1$. For $g\in H(\Cn)$, similarly to the proof of~\eqref{f-2-estimath}, we have
$$
	\| H_{f}(g)\|_{q, \varphi}
	\le C \left( \int_{\Cn}\left | g(\xi)e^{-\varphi(\xi)}\right|^q  M_{1, r}(f )(\xi) ^q dv(\xi)\right)^{\fr 1q}
	\le C \|f\|_{L^\infty} \|g\|_{p, \varphi}.
$$
This implies boundedness of $H_f$ with the norm estimate \eqref{q-is-small}.

For the second assertion,  suppose first that $\lim_{|z|\to \infty} G_{q,r}(f)(z)=0$ for some $r>0$ and write $f=f_1 + f_2$ as above.
Since the unit ball $B(F^p(\varphi))$ of $F^p(\varphi)$ is a normal  family,  to show that $H_f$ is compact from $F^p(\varphi)$ to $L^q (\varphi)$, it suffices to prove that for $k=1, 2$,
$$
	\lim_{j\to \infty} \|H_{f_k}(g_j)\|_{q, \varphi}
	= \lim_{j\to \infty}\| f_k g_j-P (f_k g_j) \|_{q, \varphi}=0
$$
for any bounded sequence $\{g_j\}_{j=1}^\infty$ in $F^p(\varphi)$ with the property that
$$
	\lim_{j\to \infty} \sup_{w\in E}|g_j(w)|=0
$$
for $E$ compact in $\Cn$. From the assumption that $\lim_{z\to\infty}  M_{q, r}(f_2)(z)=0$, it follows that $d\mu=|f_2|^q dv$ is a vanishing $(p, q)$-Fock Carleson measure (see Theorem~2.7 of~\cite{HL14} and Proposition~\ref{F-C-cpt}). Therefore, we get
$$
	\|f_2 g_j\|_{q, \varphi}  = \|g_j\|_{L^ q(\Cn, |f_2|^q dv) }\to 0 \, \textrm{ as } j\to \infty.
$$
Notice also that $\|g\|_{q, \varphi}\le C \|g\|_{p, \varphi}$ for $g\in F^q(\varphi)$ and $p\le q$. Further, by \eqref{f-2-estimath}, we obtain
$$
	M_{1, r}(f_2)(\xi)\le \|f_2\|_{L^\infty}^{ 1-q } M_{q, r}(f_2)(\xi)^q,
$$
and applying Lemma \ref{norm-0} to $h=M_{q, r}(f_2)^{q^2}$, we get
\begin{align*}
	\|H_{f_2}g_j\|_{q, \varphi}^q 
	&\le C   \int_{\Cn}\left | g_j(\xi)e^{-\varphi(\xi)}\right|^q  M_{1, r}(f_2)(\xi) ^q dv(\xi)\\
	&\le C \|f_2\|_{L^\infty}^{(1-q)q } \int_{\Cn} \left |g_j(\xi)e^{- \varphi(\xi)} \right|^q M_{q, r}(f_2)(\xi)^{q^2} dv(\xi) \to 0
\end{align*}
as $j\to \infty$.  So $H_{f_2}\in \mathcal K\left(F^p(\varphi), L^q(\varphi)\right)$. As for $H_{f_1}$, it follows from Lemma~\ref{basic-docomp} that
$$
	\left  \|  {\overline \partial}f_1   \right \|_{L^\infty(B(\xi,  r), dv)}
	\le C G_{q, r} (f)(\xi)\to 0 \ \ \textrm{ when  }\ \xi\to \infty.
$$
Therefore, by~\eqref{f-1-estimath},
$$
	\|H_{f_1}(g_j )\|_{q, \varphi}^q 
	\le C \int_{\Cn} \left | g_j (\xi)e^{-\varphi(\xi)}\right|^q
	\left\|{\overline \partial}f_1   \right \|_{L^\infty(B(\xi,  r), dv)}^q  dv(\xi)\to 0
$$
as $j\to \infty$, and hence we have $H_{f_1}\in \mathcal K\left(F^p(\varphi), L^q(\varphi)\right)$.

Conversely, suppose that $H_f$ is compact from $F^p(\varphi)$ to $L^q(\varphi)$. Then, as in~\eqref{compact-G}, we have
\begin{equation}\label{IDA--q<1}
	\lim_{z\to \infty} G_{q, r}  (f)(z) 
	\le C    \lim_{z\to \infty} \|H_f(k_z)\|_{q, \varphi}=0
\end{equation}
for $r\in (0, r_0]$ fixed. We claim that \eqref{IDA--q<1} is valid for any $r>0$. To see this, we consider the Hankel operator $H_f$ on the Fock space $F^p_\alpha$. From \eqref{IDA--q<1}, using the sufficiency part, it follows that $H_f$ is compact from $F^p_\alpha$ to $L^q(\C^n, e^{-\frac {q\alpha} 2 |z|^2} dv)$.
Notice that the equality \eqref{classical} yields
$$
	\inf_{w\in B(z, r)}|K(w, z)|\ge C>0
$$
for any $r>0$ fixed, where the constant $C$ is independent of $z\in \C^n$. As in \eqref{hankel-on-k}, we have
$$
	\lim_{z\to \infty} G_{q, r}  (f)(z) 
	\le C    \lim_{z\to \infty} \|H_f(k_z)\|_{L^q\left (\C^n, e^{-\frac {q \alpha} 2 |z|^2} dv\right ) }=0.
$$
Thus, $f\in \VDA^q$. This completes the proof.
\end{proof}

The following Corollary \ref{0<q<1-G} is a direct consequence of the proof of Theorem \ref{main1} {\rm (c)} which we use to complement and extend the classical result of Berger and Coburn in the next section.

\begin{corollary} \label{0<q<1-G}
Suppose that $0<q<1$ and $f\in L^\infty$. Then the limit $\lim_{z\to \infty} G_{q, r}(f)(z)=0$ is independent of  $r>0$.
\end{corollary}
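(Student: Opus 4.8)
The plan is to read this off directly from the proof of Theorem~\ref{main1}(c), which is exactly what the sentence preceding the corollary promises. Fix $0<q<1$ and $f\in L^\infty$, and take $p=q$ (any $p\in(0,q]$ would serve). I want to show that if $\lim_{z\to\infty}G_{q,r_1}(f)(z)=0$ for one radius $r_1>0$, then the same holds for every radius $r_2>0$; by the symmetry of the roles of $r_1$ and $r_2$ this is precisely the asserted independence. The bridge between the two radii will be compactness of $H_f\colon F^p(\varphi)\to L^q(\varphi)$, a property that refers to no radius at all.

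Concretely, I would first run the sufficiency half of the compactness argument in the proof of Theorem~\ref{main1}(c) with $r=r_1$: decomposing $f=f_1+f_2$ as in~\eqref{decomposition-a} with $t=r_1/2$, Lemma~\ref{basic-docomp} gives $\|\overline\partial f_1\|_{L^\infty(B(\xi,r_1),dv)}+M_{q,r_1}(f_2)(\xi)\le C\,G_{q,r_1}(f)(\xi)\to0$, and then the vanishing Carleson criterion of~\cite{HL14} applied to $f_2$, together with the kernel bounds~\eqref{f-1-estimath}--\eqref{f-2-estimath} applied to $f_1$, force $H_f$ to be compact from $F^p(\varphi)$ to $L^q(\varphi)$. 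With that compactness in hand I would invoke the necessity half in the strengthened form established at the end of the same proof: \eqref{IDA--q<1} first yields $\lim_{z\to\infty}G_{q,r}(f)(z)=0$ for $r\in(0,r_0]$, and the ``claim'' there transfers the problem to the standard weight $\varphi=\frac\alpha2|z|^2$, where the explicit kernel identity~\eqref{classical} gives $\inf_{w\in B(z,r_2)}|K(w,z)|\ge C>0$ for the arbitrary radius $r_2$, so rerunning the lower estimate~\eqref{hankel-on-k} on $F^p_\alpha$ produces $\lim_{z\to\infty}G_{q,r_2}(f)(z)=0$. Chaining the two halves completes the argument.

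I expect no genuine obstacle here, only the bookkeeping of making the two appeals precise. The single point to verify is that every ingredient of the sufficiency step is radius-agnostic---Lemma~\ref{basic-docomp}, the Carleson-measure characterization from~\cite{HL14}, and the estimates~\eqref{f-1-estimath}--\eqref{f-2-estimath} are all proved for the given radius with no smallness hypothesis---and that the passage to $F^p_\alpha$ in the necessity step is legitimate for a fixed but arbitrary $r_2$, which it is, since that passage uses only compactness of $H_f$ (already in hand) and~\eqref{classical} removes the restriction $r\le r_0$ present in~\eqref{IDA--q<1}.
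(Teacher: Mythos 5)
Your proposal is correct and follows essentially the same route the paper intends: Corollary~\ref{0<q<1-G} is extracted from the proof of Theorem~\ref{main1}(c) by chaining the radius-agnostic sufficiency step (compactness of $H_f$ from $\lim_{z\to\infty}G_{q,r_1}(f)(z)=0$) with the necessity step transferred to the standard-weight space $F^p_\alpha$, where \eqref{classical} supplies the kernel lower bound on $B(z,r_2)$ for an arbitrary radius $r_2$ and hence removes the restriction $r\le r_0$ in \eqref{IDA--q<1}. The two checkpoints you single out are exactly the right ones, and both hold.
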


\section{Proof of Theorem \ref{main2}}\label{bdd-symbols}

\begin{proof}[Proof of the case $0<p\le q<\infty$]
For $R>0$,  let  $\left \{a_{k}\right \}_{k=1}^\infty$ be the $R/2$-lattice
$$
	\left\{\fr R{2 \sqrt{n}}  (m_1+k_1\im, m_2+k_2\im,\ldots, m_n+k_n\im)\in \Cn : m_j, k_j\in \mathbb{Z}, j=1, 2, \ldots, n\right\}.
$$
Choose $\rho\in C^\infty(\Cn)$ such that
$$
	0\le\rho\le 1, \,\,  \rho|_{B(0, 1/2)}\equiv 1,\,\, \mathrm{supp}\,  \rho \subseteq B(0, 3/4).
$$
Then $\left\|\nabla \rho\right\|_{L^\infty}<\infty$  and
$$
	0<\sum_{k=1}^\infty \rho( (z-a_k)/R)\le C
$$
for $z\in \Cn$.  Define  $\psi_{j, R}\in C^\infty(\C^n)$ by
$$
	\psi_{j, R}(z)=\frac{\rho( (z-a_j)/R)}{\sum_{k=1}^\infty \rho( (z-a_k)/R)}.
$$
Then $\{\psi_{j, R} \}_{j=1}^\infty$ is a partition of unity subordinate to $\{B(a_j, R)\}_{j=1}^\infty$ and
\begin{equation}\label{partition}
	R \left\|\nabla \psi_{j, R} (\cdot)\right\|_{L^\infty}\le C,
\end{equation}
where the constant $C$ is independent of $j$ and $R$.

Now we suppose that $f\in L^\infty$ and $H_f\in {\mathcal K}(F^p(\varphi), L^q(\varphi))$. Theorem \ref{main1} and Corollary \ref{0<q<1-G} imply that
\begin{equation}\label{G(f)--0}
	\lim_{z\to \infty} G_{q, 2R}(f)(z)  =0
\end{equation}
for $R>0$ fixed. As in~\eqref{h-z}, pick $h_{j, R}\in H(B(a_j, 2R))$  so that
\begin{equation}\label{average}
	\frac1{|B(a_{j }, 2R)|}\int_{B(a_{j }, 2R)}|f-h_{j, R}|^q dv = G_{q, 2R}(f)(a_{j})^q.
\end{equation}
By \eqref{bounded-h},
$$
	\sup_{z\in B(a_j, R)} |h_{j, R}(z)|\le C \|f\|_{L^\infty}.
$$
Set
$$
	f_{1, R}= \sum_{j=1}^\infty \psi_{j, R}\, h_{j, R} \, \, 
	\textrm{ and }\, \, f_{2, R}= f-f_{1, R}.
$$
From estimates \eqref{lattice} and \eqref{bounded-h}, it follows that there is a positive constant $C$ such that
\begin{equation}\label{f-j-bound-above}
        \|f_{1, R}\|_{L^\infty} +  \|f_{2, R}\|_{L^\infty}\le C\|f\|_{L^\infty}
\end{equation}
for $R>0$. Lemma \ref{basic-docomp} and \eqref{G(f)--0} imply that
$$
 	\lim_{z\to \infty}  M_{q, R}(\overline {f_{2, R}} )(z)
	= \lim_{z\to \infty}  M_{q, R}(f_{2, R})(z)=0,
$$
and so
\begin{equation} \label{H-f-2}
   H_{\overline {f_{2, R}}}\in {\mathcal K}(F^p(\varphi), L^q(\varphi)).
\end{equation}
Recall that $P_{z,  R}$ is the standard Bergman projection from $L^2(B(z,  R), dv)$ to $A^2(B(z,  R), dv)$. Since $h_{j, R}$ is bounded on $B(a_j,  R)$, we have $h_{j, R} = P_{a_j, R}(h_{j, R})$, that is,
$$
	\overline{ h_{j, R}(z)}= \fr 1{\pi} \int_{B(a_j,  R)}
	\frac{R^2 \overline{h_{j, R}(\xi)} dv(\xi)}{\left( R^2-  ( \xi-a_j)\cdot \overline{( z-a_j)}  \right)^{n+1}}, \quad z\in B(a_j, R).
$$
Hence,
\begin{equation}\label{f-1-bar}
	\left|{\overline\partial}\,\, \overline{ h_{j, R}(z)}\right|
	\le C \frac{\|h_{j, R}\|_{L^\infty( B(z,  R), dv)}}{R} \, \,
	\textrm{ for } \, z\in \overline{B(a_j, 3R/4)}.
 \end{equation}
Notice that $\mathrm{supp}\, \psi_{j, R}\,h_{j, R} \subseteq \overline { B(a_j, 3R/4)}$, and the estimates (\ref{partition}) and (\ref{f-1-bar}) imply that
\begin{equation*}
	\left|{\overline \partial}\, \overline{f_{1, R}} \right |
	\le  \sum_{j=1}^\infty \left| \left({\overline \partial} \psi_{j, R} \right) \overline {h_{j, R}}\right| + \sum_{j=1}^\infty \psi_{j, R} \left|  {\overline \partial}\left( \overline h_{j, R} \right) \right| 
	\le C \fr  {\|f\|_{L^\infty}} R.
\end{equation*}
Therefore, using \eqref{f_1-estimate} (when $q\ge1$) and \eqref{f-1-estimath} (when $q<1$), we have
\begin{equation*}
	\| H_{\overline{f_{1, R}}} \|_{F^p(\varphi)\to L^q(\varphi)}^p 
	\le C {\left\|{\overline \partial}\, \overline{f_{1, R}} \right \|_{L^\infty}  } \le C \frac{\|f\|_{L^\infty} } R.
\end{equation*}
The constants $C$ above are all independent of $f$ and $R$. Therefore,
$$
	\|H_{\overline f} -H_{\overline {f_{2, R}}}\|_{F^p(\varphi) \to L^q(\varphi)} 
	= \| H_{{\overline {f_{1, R}}}}\|_{F^p(\varphi) \to L^q(\varphi)}
	\le C \frac{\|f\|_{L^\infty} } R\to 0
$$
as $R\to \infty$. Finally, using \eqref{H-f-2} and the fact that ${\mathcal K}(F^p(\varphi), L^q(\varphi))$ is closed under the operator norm, we see that $H_{\overline f} \in {\mathcal K}(F^p(\varphi), L^q(\varphi))$, which completes the proof.
\end{proof}

To deal with the case $1\le q<p<\infty$, we use the Ahlfors-Beurling operator, which is a very well-known Cader\'{o}n-Zygmund operator on $L^p(\mathbb C)$, $1<p<\infty$, defined as follows
$$
	\mathfrak{T} (f)(z) = p. v.\, - \fr 1\pi \int_{\mathbb C} \fr {f(\xi)}{(\xi-z)^2} dv(\xi),
$$
where $p. v.$ means the Cauchy principal value. The Ahlfors-Beurling operator connects harmonic analysis and complex analysis, and it is of fundamental importance in several areas of mathematics including PDE and quasiconformal mappings.  See \cite{Ah06} and \cite{AIM09} for further details and examples.

\begin{lemma} \label{partial-derivatives} Suppose $1<s<\infty$. Then there is some constant $C$, depending only on $s$, such that, for $f\in C^2(\Cn)\cap L^\infty$ and $j=1, 2, \cdots, n$,
\begin{equation}\label{partial-dir}
  \left\| \fr {\partial f}{\partial z_j} \right\|_{L^s}\le C  \left\| \fr {\partial f}{\partial \overline z_j} \right\|_{L^s}.
\end{equation}
\end{lemma}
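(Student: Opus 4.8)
The plan is to reduce the assertion to one complex variable and then exploit the fact that the Ahlfors--Beurling operator $\mathfrak{T}$ intertwines $\partial/\partial z$ and $\partial/\partial\overline z$. First I would note that the operators $\partial/\partial z_j$ and $\partial/\partial\overline z_j$ involve only the variable $z_j$, so by Fubini's theorem it suffices to prove the one-variable inequality: if $g\in C^2(\C)\cap L^\infty$ and $\partial g/\partial\overline z\in L^s(\C)$, then $\partial g/\partial z\in L^s(\C)$ and $\|\partial g/\partial z\|_{L^s(\C)}\le C_s\|\partial g/\partial\overline z\|_{L^s(\C)}$. Granting this, if $\partial f/\partial\overline z_j\notin L^s(\Cn)$ the desired inequality~\eqref{partial-dir} is trivial, while otherwise almost every slice $z_j\mapsto f(z_1,\dots,z_n)$ lies in $C^2(\C)\cap L^\infty$ with $L^\infty$-norm at most $\|f\|_{L^\infty}$ and has $\partial/\partial\overline z_j$-derivative in $L^s(\C)$; applying the one-variable estimate on each slice and integrating in the remaining variables then yields~\eqref{partial-dir}.

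To prove the one-variable estimate I would use two classical facts: $\mathfrak{T}$ is bounded on $L^s(\C)$ for $1<s<\infty$ \cite{AIM09}, and for $u\in C^1_c(\C)$ the Cauchy--Pompeiu representation $u(z)=-\tfrac1\pi\int_\C\frac{(\partial u/\partial\overline\xi)(\xi)}{\xi-z}\,dv(\xi)$ gives, after differentiation in $z$, the identity $\partial u/\partial z=\mathfrak{T}(\partial u/\partial\overline z)$. I would then localise: fix $\chi\in C^\infty_c(\C)$ with $0\le\chi\le1$, $\chi\equiv1$ on $B(0,1)$ and $\supp\chi\subset B(0,2)$, put $\chi_R(z)=\chi(z/R)$, and apply the identity to $u=g\chi_R\in C^1_c(\C)$. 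Since $\chi_R\equiv1$ on $B(0,R)$, for every $z$ with $|z|<R$ this gives
\begin{equation*}
	\frac{\partial g}{\partial z}(z)=\mathfrak{T}\bigl(\chi_R\,\partial g/\partial\overline z\bigr)(z)+\mathfrak{T}\bigl(g\,\partial\chi_R/\partial\overline z\bigr)(z).
\end{equation*}
Now I would let $R\to\infty$. The first term tends to $\mathfrak{T}(\partial g/\partial\overline z)$ in $L^s(\C)$, by dominated convergence together with the boundedness of $\mathfrak{T}$, hence pointwise a.e. along some subsequence $R_k\to\infty$. The second term tends to $0$ at every fixed point $z$: indeed $\partial\chi_R/\partial\overline z$ is supported in the shell $\{R\le|\xi|\le2R\}$ with $|\partial\chi_R/\partial\overline z|\le C/R$, so for $R>2|z|$ the integrand defining $\mathfrak{T}(g\,\partial\chi_R/\partial\overline z)(z)$ is bounded by $C\|g\|_{L^\infty}R^{-1}|\xi-z|^{-2}\le C'\|g\|_{L^\infty}R^{-3}$ on a set of area $\simeq R^2$, giving a bound $O(\|g\|_{L^\infty}R^{-1})$. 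Passing to the limit along $R_k$ in the displayed identity yields $\partial g/\partial z=\mathfrak{T}(\partial g/\partial\overline z)$ almost everywhere, and the boundedness of $\mathfrak{T}$ on $L^s(\C)$ finishes the argument.

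The step I expect to be the main obstacle is exactly the treatment of the error term $\mathfrak{T}(g\,\partial\chi_R/\partial\overline z)$. Because $g$ is merely bounded (not decaying), this term has $L^s(\C)$-norm only $O(\|g\|_{L^\infty}R^{2/s-1})$, which does \emph{not} vanish as $R\to\infty$ when $1<s\le2$; so a naive triangle-inequality estimate on $B(0,R)$ followed by a limiting argument will not deliver the bound in these cases. The resolution is that one does not need norm decay of the error: mere pointwise decay is enough to identify $\partial g/\partial z$ with $\mathfrak{T}(\partial g/\partial\overline z)$ almost everywhere, after which the $L^s$-estimate is automatic; and pointwise decay holds for all $s\in(1,\infty)$ because of the quadratic decay of the Beurling kernel $(\xi-z)^{-2}$ away from the annulus that carries $\partial\chi_R/\partial\overline z$. (For $s>2$ the cruder norm bound already suffices, and for $s=2$ a logarithmic cutoff $\chi_R$ would do; the pointwise argument has the advantage of covering every exponent at once.)
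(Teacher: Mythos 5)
Your proposal is correct and follows essentially the same route as the paper: reduction to $n=1$ by slicing and Fubini, truncation of $f$ by a cutoff, the Cauchy--Pompeiu/Ahlfors--Beurling identity $\partial u/\partial z=\mathfrak{T}(\partial u/\partial\overline z)$ for compactly supported $u$, a pointwise (not norm) bound showing the error term $\mathfrak{T}(f\,\partial\chi_R/\partial\overline z)$ vanishes locally as $R\to\infty$, and finally the $L^s$-boundedness of $\mathfrak{T}$. The only cosmetic difference is that you pass to the a.e.\ identity $\partial g/\partial z=\mathfrak{T}(\partial g/\partial\overline z)$ before taking norms, whereas the paper bounds $\|\partial f/\partial z\|_{L^s(D(0,r))}$ uniformly in $r$ and lets $r\to\infty$ (which is why it must treat the case $\|\partial f/\partial\overline z\|_{L^s}=0$ separately via Liouville, a step your version renders unnecessary).
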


\begin{proof}
We consider the case $n=1$ first. Let $f\in C^2(\mathbb C)\cap L^\infty$. If $\left\| \fr {\partial f}{\partial \overline z} \right\|_{L^s}=0$, then $f\in H(\mathbb C)\cap L^\infty$, which implies that the function $f$ is constant and the estimate   $(\ref{partial-dir})$ follows. Next we suppose that $\left\| \fr {\partial f}{\partial \overline z} \right\|_{L^s}>0$. Take $\psi(r)\in C^\infty(\mathbb R)$ to be decreasing such that $\psi(x)=1$ for $x\le 0$, $\psi(x)=0$ for $x\ge 1$, and $0\le -\psi'(x)\le 2$ for $x\in  \mathbb R$.
For $R>0$ fixed, we set $\psi_R(x)= \psi(x-R)$ for $x\in\R$ and define $f_R(z)=  f(z) \psi_R(|z|)$ for $z\in\C$. Since $f\in C^2(\C)\cap L^\infty$, it is obvious that $ f_R(z) \in C^2_c(\mathbb C)$,  the set of $C^2$ functions on $\mathbb R^2$   with compact support. From Theorem 2.1.1 of \cite{CS}, it follows that
$$
	f_R (z)= \fr 1{2\pi \mathrm i} \int_{\mathbb C} \frac{\frac{\partial f_R}{\partial \overline z}}{\xi-z} d\xi\wedge d\overline \xi.
$$
Notice that $ \frac{\partial f_R}{\partial \overline z} = \psi_R \frac{\partial  f}{\partial \overline z}+ f \frac{\partial  \psi_R}{\partial \overline z} $. By Lemma 2 on page 52 of \cite{Ah06}, we get
\begin{equation}\label{partial-estimate-z}
	\frac{\partial f_R}{\partial z}(z) 
	= \mathfrak{T}\left( \frac{\partial f_R}{\partial \overline z}\right)(z)
	= \mathfrak{T}\left(\psi_R \frac{\partial  f}{\partial \overline z}\right)(z)+ \mathfrak{T}\left(f \frac{\partial  \psi_R}{\partial \overline z}\right)(z).
\end{equation}
Now for $r>0$ and $|z|<r$, when $R$ is sufficiently large, it holds that
$$
	\Bigg | \mathfrak{T}\left(f \frac{\partial  \psi_R}{\partial \overline z}\right) \Bigg | (z) 
	\le \frac{\|f\|_{L^\infty} } {\pi(R-r)^2}  \int_{R\le |\xi|\le R+1}   dv(\xi)
	\le  \frac{3R \|f\|_{L^\infty} } { (R-r)^2},
$$
and hence
\begin{equation}\label{partial-deri-a}
	\left\|\mathfrak{T}\left(f \frac{\partial  \psi_R}{\partial \overline z}\right)\right\|_{L^s(D(0, r), dv)} 
	\le  \left\| \frac{\partial f}{\partial \overline {z}} \right\|_{L^s},
\end{equation}
where $D(0,r) = \{z\in\C : |z|<r\}$. In addition, by the boundedness of $\mathfrak{T}$ on $L^s$ (see, for example, the estimate (11) on page 53 in \cite{Ah06}), we get
\begin{equation}\label{partial-deri-b}
	\left\| \mathfrak{T}\left(\psi_R \frac{\partial  f}{\partial \overline z}\right)\right\|_{L^s}
	\le C \left\| \psi_R \frac{\partial f}{\partial \overline z} \right\|_{L^s} 
	\le C \left\|\fr {\partial  f}{\partial \overline z} \right\|_{L^s}.
\end{equation}
For $R$ sufficiently large, from \eqref{partial-estimate-z}, \eqref{partial-deri-a} and \eqref{partial-deri-b} it follows that
$$
	\left\|  \fr {\partial f}{\partial z} \right\|_{L^s(D(0, r), dv)}
	= \left\|  \fr {\partial f_R}{\partial z} \right\|_{L^s(D(0, r), dv)}
	\le C \left\|\fr {\partial  f}{\partial \overline z} \right\|_{L^s}.
$$
Therefore,
\begin{equation}\label{derivative-est}
	\left\|  \frac{\partial f}{\partial z} \right\|_{L^s} 
	\le C \left\|\fr {\partial  f}{\partial \overline z} \right\|_{L^s}.
\end{equation}

Now for $n\ge 2$ and $f\in L^\infty\cap C^2(\C^n)$, by \eqref{derivative-est}, we have
\begin{align*}
	\int_{\C^n} \left|\fr {\partial f}{\partial z_1}(\xi)\right|^s dv(\xi)
	&= \int_{{\mathbb C}^{n-1}} dv(\xi') \int_{\mathbb C} \left|  \frac
      {\partial f}{\partial z_1}(\xi_1, \xi')\right|^s dv(\xi_1) \\
      &\le C  \int_{{\mathbb C}^{n-1}} dv(\xi') \int_{\C} \left|
      \frac{\partial f}{\partial \overline z_1}(\xi_1, \xi')\right|^s dv(\xi_1).
\end{align*}
This implies \eqref{partial-dir} for $j=1$.  Similarly, \eqref{partial-dir} holds for $j=2, \ldots, n$, and the proof is complete.
 \end{proof}

\begin{proof}[Proof of the case $1\le q<p<\infty$]
Notice first that if $H_f\in \mathcal K(F^p(\varphi), L^q(\varphi)$, then by Theorem \ref{main1}, we have $f\in \mathrm{IDA}^{s, q}$ with $s=\fr {pq}{p-q}>1$. We use a decomposition $f=f_1+f_2$ as in \eqref{q-less-then-p-m} with $r=1$. Furthermore, by \eqref{f-j-bound-above}, we may assume that $\|f_1\|_{L^\infty}\le C \|f\|_{L^\infty}$. Then, from Lemma \ref{partial-derivatives} it follows that
$$
	\|\overline \partial\, \overline {f_1}\|_{L^s}
	\le  C  \sum_{j=1}^n  \left\| \fr {\partial \overline f}{\partial \overline z_j} \right\|_{L^s}
	= C  \sum_{j=1}^n  \left\| \fr {\partial   f}{\partial z_j} \right\|_{L^s}
	\le C  \sum_{j=1}^n  \left\| \frac{\partial   f}{\partial \overline z_j} \right\|_{L^s}
	\le C \|\overline \partial\,  {f_1}\|_{L^s}
$$
We also observe that $\|M_{q, r}(\overline {f_2})\|_{L^{s}} = \|M_{q, r}(f_2)\|_{L^{s}}<\infty$. Now Theorem \ref{BDA-equivalence} implies that $\overline f = \overline {f_1}+ \overline {f_2}\in  \textrm{IDA}^{s, q}$, and hence, by Theorem \ref{main1}, we get $H_{\overline f}\in \mathcal K(F^p(\varphi), L^q(\varphi))$.
\end{proof}

\begin{remark}
Notice that it follows from the preceding proof that
$$
	\|  H_{\overline f}\|_{F^p(\varphi)\to L^q(\varphi)}
	\le C \|  H_{ f}\|_{F^p(\varphi)\to L^q(\varphi)}.
$$
\end{remark}

\section{Application to Berezin-Toeplitz quantization}\label{quantization}

As an application and further generalization of our results, we consider deformation quantization in the sense of Rieffel~\cite{Ri89, Ri90} and focus on one of its essential ingredients in the non-compact setting of $\C^n$ that involves the limit condition
$$
	\lim_{t\to 0} \left \|T^{(t)}_f T^{(t)}_g- T^{(t)}_{fg}\right \|_{{  F}^2_t(\varphi) \to {  F}^2_t(\varphi)} =0.
$$
Recently this and related questions were studied in~\cite{BC16, BCH18, F20}, which also provide further physical background and references for this type of quantization.

Recall that  $\varphi \in   C^2( {\mathbb C}^{n})$ is real valued and $\mathrm{Hess}_{\mathbb R}\varphi \simeq{\mathrm E}$, where ${\mathrm E}$ is the $2n\times 2n$-unit matrix.
For $t>0$, we set
$$
	d\mu_t (z)= \frac1{t^n} \exp\left \{-  2\varphi\left( \fr    z { \sqrt{t}}\right)\right \} dv(z)
$$
and denote by $L^2_t(\varphi)$ the space of all Lebesgue measurable functions $f$ in $\C^n$ such that
$$
	\left\|f\right \|_{t} = \left\{ \int_{\C^n} \left| f \right|^2 d\mu_t(z) \right\}^{\frac12}.
$$
Further, we let $F^2_t(\varphi) = L^2_t(\varphi) \cap H(\Cn)$. Then clearly  $F^2_1(\varphi)=F^2(\varphi)$ and $L^2_1(\varphi)= L^2(\varphi)$  in terms of the spaces that were considered in the previous sections. Given $f\in L^\infty$,  we use the orthogonal projection $P^{(t)}$ from    $L^2_t(\varphi)$ onto $F^2_t(\varphi)$ to define the Toeplitz operator $T^{(t)}_f$ and the Hankel operator  $H^{(t)}_f$, respectively, by
$$
	T^{(t)}_f = P^{(t)} M_f \ \ \textrm{ and }\ \ 
	H^{(t)}_f= (\mathrm I- P^{(t)})M_f.
 $$
Let $ U_t$ be the  dilation acting on measurable functions in $\C^n$ as
$$
	U_t:  f \mapsto  f(\cdot  \sqrt{t}).
$$
It is easy to verify that  $U_t$ is a unitary operator from $L^2_t(\varphi)$ to $L^2(\varphi)$ (as well as a unitary operator from $F^2_t(\varphi)$ to $F^2(\varphi)$). Further, we have $U_t P^{(t)} U_t^{-1} = P^{(1)}$, which implies that
\begin{equation}\label{transform}
	U_t T^{(t)}_f U_t^{-1} 
	= T_{f(\cdot  \sqrt{t})},\ \  
	U_t H^{(t)}_f U_t^{-1} = H_{f(\cdot  \sqrt{t})}.
\end{equation}
Therefore,
\begin{equation}\label{u-operator-2}
	\| T^{(t)}_f\|_{{  F}^2_t(\varphi) \to {  F}^2_t(\varphi)} 
	= \| T_{f(\cdot \sqrt{t})}\|_{{  F}^2(\varphi) \to {  F}^2(\varphi)}
\end{equation}
and
\begin{equation}\label{u-operator-z}
	\| H^{(t)}_f\|_{{  F}^2_t(\varphi) \to {  L}^2_t(\varphi)} 
	= \| H_{f(\cdot \sqrt{t})}\|_{{  F}^2(\varphi) \to {  L}^2(\varphi)}.
\end{equation}

Given $f\in L^2_{\mathrm{loc}}$, for $z\in \Cn$ and $r>0$ set
$$
	MO_{2, r}(f)(z) 
	=  \left\{ \fr 1{|B(z, r)|} \int_{B(z, r)} \left| f-f_{B(z, r)}\right|^2 dv\right\}^{\frac12}
$$
where $f_{S} = \fr 1{|S|} \int_S f dv$ for $S\subset \C^n$ measurable.

The following definitions of $\BMO$ and $\VMO$ are analogous to the classical definition  introduced by John and Nirenberg \cite{JN61}, but they differ from those widely used in the study of Bergman and Fock spaces.

\begin{definition}\label{bmo}
We denote by $\mathrm{BMO}$ the set of all $f\in L^2_{\mathrm{loc}}$ such that
$$
	\left \|f \right\|_* = \sup_{z\in \Cn, \, r>0} MO_{2, r}(f)(z) <\infty
$$
and by $\mathrm{VMO}$ the set of all $f\in \BMO$ such that
$$
	\lim_{r\to 0} \sup_{z\in \Cn} MO_{2, r}(f)(z)=0.
$$
\end{definition}

\begin{definition}\label{bda}
We define $\mathrm{BDA}_*$ to be the family of all $f\in L^2_{\mathrm{loc}}$ such that
$$
	\|f\|_{\mathrm{BDA}_*}= \sup_{z\in \Cn, r>0}  G_{2, r}(f)(z)<\infty
$$
and $\mathrm{VDA}_*$ to be the subspace of all $f\in \mathrm{BDA}_*$ such that
$$
	\lim_{r\to 0} \sup_{z\in \Cn } G_{2, r}(f)(z)=0.
$$
\end{definition}

Given a family $X$ of functions on $\Cn$, we set $\overline X =\{\bar{f}:  f\in X\}$.

\begin{proposition}\label{bmo-bda}
It holds that
$$
	\mathrm{BMO}  = {\mathrm{BDA}_*} \cap \overline {\mathrm{BDA}_*}
	\quad{\rm and}\quad
	\mathrm{VMO}  = {\mathrm{VDA}_*} \cap \overline {\mathrm{VDA}_*}.
$$
Furthermore, we have
\begin{equation}\label{bmo-bda-a}
       \|f\|_{\mathrm{BMO}_*} \simeq \|f\|_{\mathrm{BDA}_*} + \|\overline f\|_{\mathrm{BDA}_*}
\end{equation}
for $f\in L^2_{\mathrm{loc}}$.
\end{proposition}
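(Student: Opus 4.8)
The plan is to prove the two set identities by establishing each inclusion, with essentially all the work concentrated in $\mathrm{BDA}_*\cap\overline{\mathrm{BDA}_*}\subseteq\mathrm{BMO}$ together with the quantitative estimate \eqref{bmo-bda-a}. The easy inclusion is immediate: taking the constant function $h\equiv f_{B(z,r)}$ as a competitor in the infimum defining $G_{2,r}(f)(z)$ gives $G_{2,r}(f)(z)\le MO_{2,r}(f)(z)$ for all $z\in\Cn$, $r>0$; and since $|\bar f-(\bar f)_{B(z,r)}|=|f-f_{B(z,r)}|$ pointwise we have $MO_{2,r}(\bar f)=MO_{2,r}(f)$, hence also $G_{2,r}(\bar f)(z)\le MO_{2,r}(f)(z)$. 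Taking suprema over $z$ and $r$ yields $\|f\|_{\mathrm{BDA}_*}+\|\bar f\|_{\mathrm{BDA}_*}\le 2\|f\|_*$, so $\mathrm{BMO}\subseteq\mathrm{BDA}_*\cap\overline{\mathrm{BDA}_*}$; tracking instead $\sup_z$ and $\lim_{r\to0}$ gives $\mathrm{VMO}\subseteq\mathrm{VDA}_*\cap\overline{\mathrm{VDA}_*}$.

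For the reverse inclusion, fix $f\in\mathrm{BDA}_*\cap\overline{\mathrm{BDA}_*}$, $z\in\Cn$ and $r>0$. I would apply Lemma \ref{G-function} on the ball $B(z,2r)$ to get $h\in H(B(z,2r))$ with $M_{2,2r}(f-h)(z)=G_{2,2r}(f)(z)$, and apply it to $\bar f$ to get $g\in H(B(z,2r))$ with $M_{2,2r}(\bar f-g)(z)=G_{2,2r}(\bar f)(z)$; conjugating, the anti-holomorphic function $\bar g$ satisfies $M_{2,2r}(f-\bar g)(z)=G_{2,2r}(\bar f)(z)$. The crucial point is that $u:=h-\bar g$ is pluriharmonic on $B(z,2r)$ and, by the triangle inequality in $L^2(B(z,2r))$, $M_{2,2r}(u)(z)\le G_{2,2r}(f)(z)+G_{2,2r}(\bar f)(z)$. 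Classical interior estimates for harmonic functions (the sub-mean value property of $|u|^2$ together with the Cauchy/gradient estimate) then give $\sup_{B(z,r)}|\partial u|\le Cr^{-1}M_{2,2r}(u)(z)$; since $\bar\partial h=0$ and $\partial\bar g=0$, we have $\partial u=\partial h$ and $|\nabla h|\le C|\partial u|$ pointwise, so $h$ is nearly constant on $B(z,r)$, with $\mathrm{osc}_{B(z,r)}h\le 2r\sup_{B(z,r)}|\nabla h|\le C\left(G_{2,2r}(f)(z)+G_{2,2r}(\bar f)(z)\right)$. Combining this with the elementary inequalities $MO_{2,r}(f)(z)\le MO_{2,r}(f-h)(z)+MO_{2,r}(h)(z)$, $MO_{2,r}(f-h)(z)\le 2M_{2,r}(f-h)(z)\le CM_{2,2r}(f-h)(z)$ (Corollary \ref{G-comparison}), and $MO_{2,r}(h)(z)\le\mathrm{osc}_{B(z,r)}h$ gives $MO_{2,r}(f)(z)\le C\left(G_{2,2r}(f)(z)+G_{2,2r}(\bar f)(z)\right)$. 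Taking the supremum over all $z\in\Cn$ and $r>0$ (observing that $2r$ ranges over all of $(0,\infty)$) gives $\|f\|_*\le C(\|f\|_{\mathrm{BDA}_*}+\|\bar f\|_{\mathrm{BDA}_*})$, which together with the easy direction proves $\mathrm{BMO}=\mathrm{BDA}_*\cap\overline{\mathrm{BDA}_*}$ and \eqref{bmo-bda-a}. The $\mathrm{VMO}$ identity follows by taking $\sup_z$ in the same chain: if $f\in\mathrm{VDA}_*\cap\overline{\mathrm{VDA}_*}$ then $\sup_z MO_{2,r}(f)(z)\le C(\sup_z G_{2,2r}(f)(z)+\sup_z G_{2,2r}(\bar f)(z))\to0$ as $r\to0^+$, and the finiteness of $\|f\|_{\mathrm{BDA}_*}+\|\bar f\|_{\mathrm{BDA}_*}$ already places $f$ in $\mathrm{BMO}$, hence in $\mathrm{VMO}$.

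The main obstacle, and the heart of the argument, is the step controlling the oscillation of the holomorphic extremal $h$: one must use that $f$ is simultaneously $L^2$-close on $B(z,2r)$ to a holomorphic and to an anti-holomorphic function to force their difference $h-\bar g$ to be pluriharmonic with small $L^2$ norm, and then invoke the scale-invariant interior gradient estimate for harmonic functions. Everything else — the triangle inequalities, the comparison $MO_{2,r}\le 2M_{2,r}$, and the passage from radius $2r$ down to $r$ via Corollary \ref{G-comparison} — is routine bookkeeping.
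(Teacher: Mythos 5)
Your proposal is correct. The overall skeleton matches the paper's: the easy inclusion is obtained exactly as you do (constant competitor plus $MO_{2,r}(\bar f)=MO_{2,r}(f)$), and the hard direction in both cases reduces to a pointwise inequality of the form $MO_{2,r}(f)(z)\le C\{G_{2,\cdot}(f)(z)+G_{2,\cdot}(\bar f)(z)\}$ with $C$ independent of $z$ and $r$. The difference is where that key inequality comes from: the paper simply cites (a careful inspection of) Proposition~2.5 of \cite{HW18}, which produces a constant $c(z)$ with $M_{2,r}(f-c(z))(z)\le C\{G_{2,r}(f)(z)+G_{2,r}(\bar f)(z)\}$, and then concludes by the trivial bounds $MO_{2,r}(f)\le M_{2,r}(f-c(z))$ and $G_{2,r}(f)\le MO_{2,r}(f)$. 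You instead open up that black box and prove the estimate directly: taking the extremal $h$ for $f$ and $g$ for $\bar f$ from Lemma~\ref{G-function} on $B(z,2r)$, observing that $u=h-\bar g$ is pluriharmonic with $M_{2,2r}(u)(z)\le G_{2,2r}(f)(z)+G_{2,2r}(\bar f)(z)$, and invoking the scale-invariant interior gradient estimate to control $\mathrm{osc}_{B(z,r)}h$ by $Cr\cdot r^{-1}M_{2,2r}(u)(z)$. All steps check out (the radius mismatch $r$ versus $2r$ is harmless since one takes $\sup_{r>0}$ for $\BDA_*$/$\BMO$ and $\lim_{r\to 0}\sup_z$ for $\VDA_*$/$\VMO$, and the identity $\partial u=\partial h$ with $|\nabla h|\simeq|\partial h|$ for holomorphic $h$ is exactly the right mechanism). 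What your version buys is self-containedness and an explicit, quantitative reason why simultaneous $L^2$-proximity to a holomorphic and to an anti-holomorphic function forces proximity to a constant; what the paper's version buys is brevity, at the cost of deferring the essential analytic content to \cite{HW18}.
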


\begin{proof}
From a careful inspection of the proof of Proposition 2.5 in~\cite{HW18}, it follows that there is a constant $C>0$ such that, for $f\in L^2_{\mathrm{loc}}$ and $z\in \Cn$,  $r>0$,  there is a constant $c(z)$ for which
$$
	\left\{\frac1{|B(z, r)|}\int_{B(z, r)}\left|  f-c(z) \right|^2 dv \right\}^{\fr 12}
	\le C\left\{ G_{2, r}(f)(z) + G_{2, r}(\overline f)(z) \right\}.
$$
It is easy to verify that
$$
	MO_{2, r}(f)(z)
	\le \left\{   \fr 1{|B(z, r)|}\int_{B(z, r)}\left|  f-c(z) \right|^2 dv \right\}^{\frac12},
$$
and hence
$$
	MO_{2, r}(f)(z)\le C \left\{ G_{2, r}(f)(z)  +  G_{2, r}(\overline f)(z) \right\}.
$$
On the other hand, by definition, we have
$$
	G_{2, r}(f)(z)\le MO_{2, r}(f)(z).
$$
Thus, we have $C_1$ and $C_2$, independent of $f$, $r$ and $z$, such  that
\begin{equation}\label{bmo-bda-y}
\begin{split}
	C_1  \left\{ G_{2, r}(f)(z) + G_{2, r}(\overline f)(z) \right\}&\le MO_{2, r}(f)(z)\\
	&\le C_2\left\{ G_{2, r}(f)(z) + G_{2, r}(\overline f)(z) \right\}.
\end{split}
\end{equation}
Therefore, $f\in \mathrm{BMO}$ (or $f\in \mathrm{VMO}$) if and only if $f\in  {\mathrm{BDA}_*} \cap \overline {\mathrm{BDA}_*}$ (or $f\in  {\mathrm{VDA}_*} \cap \overline {\mathrm{VDA}_*}$).
The estimate in~\eqref{bmo-bda-a} follows from \eqref {bmo-bda-y}.
\end{proof}

\begin{theorem}\label{quantizaton-2}
Suppose $f\in L^\infty$. Then for all $g\in L^\infty$, it holds that
\begin{equation} \label{quantization-2-a}
	\lim_{t\to 0} \left \|T^{(t)}_f T^{(t)}_g- T^{(t)}_{fg}\right \|_{{  F}^2_t(\varphi) \to {  F}^2_t(\varphi)} = 0
\end{equation}
if and only if $f\in \overline {\mathrm{VDA}}_*$.
\end{theorem}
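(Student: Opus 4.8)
The plan is to reduce the statement to the asymptotic behaviour of a single Hankel operator via the standard operator identity
\begin{equation*}
  T^{(t)}_f T^{(t)}_g - T^{(t)}_{fg} = -\bigl(H^{(t)}_{\bar f}\bigr)^{*} H^{(t)}_g,
\end{equation*}
which holds on the Hilbert space $F^2_t(\varphi)$ for all $f,g\in L^\infty$: for $h\in F^2_t(\varphi)$ one has $T^{(t)}_f T^{(t)}_g h - T^{(t)}_{fg}h = P^{(t)}\bigl(f\,(P^{(t)}-\mathrm I)(gh)\bigr) = -\bigl(H^{(t)}_{\bar f}\bigr)^{*}H^{(t)}_g h$, using $(H^{(t)}_{\bar f})^{*}=P^{(t)}M_f(\mathrm I-P^{(t)})$ and $(\mathrm I-P^{(t)})(gh)=H^{(t)}_g h$. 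From the identity, $\|T^{(t)}_f T^{(t)}_g - T^{(t)}_{fg}\|_{t}\le \|H^{(t)}_{\bar f}\|_{t}\,\|H^{(t)}_g\|_{t}$, while the choice $g=\bar f\in L^\infty$ yields the reverse estimate $\|T^{(t)}_f T^{(t)}_{\bar f} - T^{(t)}_{|f|^2}\|_{t} = \|(H^{(t)}_{\bar f})^{*}H^{(t)}_{\bar f}\|_{t} = \|H^{(t)}_{\bar f}\|_{t}^{2}$. Thus, once we know that $\|H^{(t)}_g\|_{t}$ stays bounded as $t\to 0$ for each fixed $g\in L^\infty$, the condition \eqref{quantization-2-a} holding for every $g\in L^\infty$ is \emph{equivalent} to $\|H^{(t)}_{\bar f}\|_{t}\to 0$ as $t\to 0$.

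Second, I would express $\|H^{(t)}_{\bar f}\|_{t}$ through the quantities $G_{2,r}(\bar f)$. By the dilation identity \eqref{u-operator-z}, $\|H^{(t)}_h\|_{t}=\|H_{h(\cdot\sqrt t)}\|_{F^2(\varphi)\to L^2(\varphi)}$ for any $h\in L^\infty$; Theorem~\ref{main1}(a) with $p=q=2$ gives $\|H_{h(\cdot\sqrt t)}\|_{F^2(\varphi)\to L^2(\varphi)}\simeq \sup_{z\in\C^n}G_{2,1}\bigl(h(\cdot\sqrt t)\bigr)(z)$ with constants depending only on $n$, $m$, $M$, hence independent of $t$. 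A change of variables $w=\xi\sqrt t$ in the definition of $G_{2,1}$, using that $\tilde h(w)=h(w/\sqrt t)$ runs over $H(B(z\sqrt t,\sqrt t))$ precisely when $h$ runs over $H(B(z,1))$, gives $G_{2,1}(h(\cdot\sqrt t))(z)=G_{2,\sqrt t}(h)(z\sqrt t)$, so $\sup_z G_{2,1}(h(\cdot\sqrt t))(z)=\sup_z G_{2,\sqrt t}(h)(z)$. Combining, $\|H^{(t)}_h\|_{t}\simeq \sup_{z\in\C^n}G_{2,\sqrt t}(h)(z)$ uniformly in $t$; taking $\tilde h=0$ in particular gives $\|H^{(t)}_g\|_{t}\le \|g\|_{L^\infty}$ for all $t$, which supplies the uniform boundedness needed in the first step.

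Applying the last equivalence with $h=\bar f$, we conclude that \eqref{quantization-2-a} holds for all $g\in L^\infty$ if and only if $\sup_{z\in\C^n}G_{2,\sqrt t}(\bar f)(z)\to 0$ as $t\to 0$, that is, $\lim_{r\to 0}\sup_{z}G_{2,r}(\bar f)(z)=0$. Since $f\in L^\infty$ forces $G_{2,r}(\bar f)(z)\le M_{2,r}(\bar f)(z)\le \|f\|_{L^\infty}$ for all $z$ and $r$, we automatically have $\bar f\in\mathrm{BDA}_*$, so this is exactly the defining condition for $\bar f\in\mathrm{VDA}_*$ in Definition~\ref{bda}, i.e.\ $f\in\overline{\mathrm{VDA}}_*$. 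I expect the only delicate points to be confirming that the equivalence constants in Theorem~\ref{main1}(a) depend solely on the fixed weight data (so the middle $\simeq$ is genuinely uniform in $t$) and the careful bookkeeping in the rescaling identity $G_{2,1}(h(\cdot\sqrt t))(z)=G_{2,\sqrt t}(h)(z\sqrt t)$; the rest is elementary.
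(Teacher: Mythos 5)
Your proposal is correct and follows essentially the same route as the paper: the operator identity $T^{(t)}_f T^{(t)}_g- T^{(t)}_{fg}=-\bigl(H^{(t)}_{\overline f}\bigr)^{*}H^{(t)}_g$, the choice $g=\overline f$ together with the $C^*$-identity for the necessity direction, the dilation relation \eqref{u-operator-z} combined with Theorem~\ref{main1} (uniformly in $t$), and the rescaling identity $G_{2,1}(h(\cdot\sqrt t))(z)=G_{2,\sqrt t}(h)(z\sqrt t)$. The two points you flag as delicate (uniformity of the constants in $t$ and the rescaling bookkeeping) are exactly the ones the paper relies on, and both check out.
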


\begin{proof}
Given $f\in L^\infty$, it follows from \eqref{u-operator-z} that
$$
	\left  \| \left(H^{(t)}_{\overline f} \right )^* \right \|_{{  L}^2_t(\varphi)\to {  F}^2_t(\varphi)}
	= \left \| H^{(t)}_{\overline f} \right \|_{{  F}^2_t(\varphi) \to {  L}^2_t(\varphi)}
	= \left \| H_{f(\cdot \sqrt{t})}\right \|_{{  F}^2(\varphi) \to {  L}^2(\varphi)}.
$$
This and Theorem \ref{main1} imply
\begin{equation} \label{vda-3}
\begin{aligned}
	\frac1{C}  \|  G_{2, 1}(f(\cdot \sqrt{t}))\|_{L^\infty}
	&\le  \left \| \left(H^{(t)}_{\overline f}\right)^*\right \|_{{  L}^2_t(\varphi)\to {  F}^2_t(\varphi)}\\
	&\le  C \|  G_{2, 1}(f(\cdot \sqrt{t}))\|_{L^\infty},
\end{aligned}
\end{equation}
where the constant $C$ is independent of $f$ and $t$.

Suppose $f\in\overline {\mathrm{VDA}_*}$. Then, by definition, we have
$$
	\lim_{r\to 0}  \sup_{z\in \Cn } G_{2, r}(\overline f)(z) =0.
$$
It is easy to verify that
$$
	G_{2, 1} \left(f (\cdot \sqrt{t})\right)(z)
	= G_{2, \sqrt{t}} (f)\left(z\sqrt{t}\right).
$$
Now by \eqref{vda-3}, we get
\begin{equation} \label{vda-1}
	\lim_{t\to 0} \left \| \left(H^{(t)}_{\overline f}\right)^*\right \|_{{  L}^2_t(\varphi)\to {  F}^2_t(\varphi)}
	\le  C \,  \lim_{t\to 0}\|  G_{2, \sqrt{t}}(\overline  {f})\|_{L^\infty}=0.
\end{equation}
In addition, for $f, g\in L^\infty$, it is easy to verify that
\begin{eqnarray} \label{T-H-relation1}
	T^{(t)}_f T^{(t)}_g- T^{(t)}_{fg} 
	=-\left(H^{(t)}_{\overline f}\right)^* H^{(t)}_g.
\end{eqnarray}
Therefore, for all $g\in L^\infty$,
$$
	\lim_{t\to 0}\left \|  T^{(t)}_f T^{(t)}_g- T^{(t)}_{fg}\right \|_{{  F}^2_t(\varphi) \to {F}^2_t(\varphi)} 
	\le  \| g\|_{L^\infty}\,  \lim_{t\to 0} \left \| \left(H^{(t)}_{\overline f}\right)^*\right \|_{{  L}^2_t(\varphi)\to {  F}^2_t(\varphi)}=0,
$$
which gives \eqref{quantization-2-a}.

Conversely, suppose that (\ref{quantization-2-a}) holds for every $g\in L^{\infty}$. Let $g={\overline f}\in L^\infty$. Then it follows from~\eqref{T-H-relation1} that
\begin{align*}
	\lim_{t\to 0} \left \|H_{\overline f}^{(t)}\right \|^2_{{  F}^2_t(\varphi) \to {  L}^2_t(\varphi)} 
	&=\lim_{t\to 0} \left \|\left(H_{\overline f}^{(t)} \right)^* H_{\overline f}^{(t)}\right \|_{{  F}^2_t(\varphi) \to {  F}^2_t(\varphi)}\\
	&= \lim_{t\to 0} \|T^{(t)}_f T^{(t)}_{\overline f}- T^{(t)}_{|f|^2}\|_{{  F}^2_t(\varphi) \to {  F}^2_t(\varphi)}  =0.
\end{align*}
This and \eqref{vda-3} imply that $f\in\overline {\mathrm{VDA}_*}$.
\end{proof}

Combining Proposition \ref{bmo-bda} with Theorem \ref{quantizaton-2}, we obtain the following corollary, which is the main result of \cite{BCH18} when $\varphi(z)= \frac 1 8 |z|^2$.

\begin{corollary}\label{T-VMO}
Suppose $f\in L^\infty$. Then for all $g\in L^\infty$, it holds that
\begin{equation}\label{T-VMO-a}
        \lim_{t\to 0} \left \|T^{(t)}_f T^{(t)}_g- T^{(t)}_{fg}\right \| =0
        \quad {\rm and}\quad
        \lim_{t\to 0}\left \| T^{(t)}_g T^{(t)}_f - T^{(t)}_{fg}\right \| =0
\end{equation}
if and only if $g\in \mathrm{VMO}$. Here $\|\cdot\| = \|\cdot\|_{{  F}^2_t(\varphi) \to {  F}^2_t(\varphi)}$.
\end{corollary}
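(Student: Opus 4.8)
The plan is to deduce Corollary~\ref{T-VMO} as a formal consequence of Theorem~\ref{quantizaton-2} together with Proposition~\ref{bmo-bda}, the only new ingredient being a passage to Hilbert-space adjoints that lets us treat the second identity in \eqref{T-VMO-a} in the same way as the first. First I would recall the algebraic relation \eqref{T-H-relation1}, namely $T^{(t)}_f T^{(t)}_g - T^{(t)}_{fg} = -\big(H^{(t)}_{\overline f}\big)^* H^{(t)}_g$ for $f,g\in L^\infty$, and the unitary equivalences \eqref{transform}. Theorem~\ref{quantizaton-2} already asserts that the first limit in \eqref{T-VMO-a} vanishes for every $g\in L^\infty$ if and only if $f\in\overline{\mathrm{VDA}}_*$, so it remains to identify the symbols $f$ for which the second limit vanishes for all $g$.

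For the second identity I would use $\|A\|=\|A^*\|$ on $F^2_t(\varphi)$ together with $(T^{(t)}_h)^*=T^{(t)}_{\overline h}$, which gives
\begin{equation*}
	\big(T^{(t)}_g T^{(t)}_f - T^{(t)}_{fg}\big)^* = T^{(t)}_{\overline f}\,T^{(t)}_{\overline g} - T^{(t)}_{\overline f\,\overline g},
\end{equation*}
so that $\lim_{t\to 0}\|T^{(t)}_g T^{(t)}_f - T^{(t)}_{fg}\|=0$ for all $g\in L^\infty$ is equivalent to $\lim_{t\to 0}\|T^{(t)}_{\overline f}T^{(t)}_{\overline g}-T^{(t)}_{\overline f\,\overline g}\|=0$ for all $g\in L^\infty$; since $g\mapsto\overline g$ is a bijection of $L^\infty$, this is the same as $\lim_{t\to 0}\|T^{(t)}_{\overline f}T^{(t)}_g-T^{(t)}_{\overline f g}\|=0$ for all $g\in L^\infty$. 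Applying Theorem~\ref{quantizaton-2} with $\overline f$ in place of $f$, this holds precisely when $\overline f\in\overline{\mathrm{VDA}}_*$, i.e. $f\in\mathrm{VDA}_*$. Combining the two equivalences, both limits in \eqref{T-VMO-a} vanish for every $g\in L^\infty$ if and only if $f\in\mathrm{VDA}_*\cap\overline{\mathrm{VDA}}_*$, which by Proposition~\ref{bmo-bda} is exactly $f\in\mathrm{VMO}$. Finally I would note that for $\varphi(z)=\tfrac18|z|^2$ the measure $d\mu_t$ is the Gaussian weight normalized as in \cite{BCH18}, so the corollary recovers the main result there.

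I do not expect a real obstacle here: the statement is essentially a bookkeeping corollary of the two results invoked above. The one point to carry out carefully is justifying the adjoint manipulation, but for bounded symbols $T^{(t)}_h$ and $H^{(t)}_h$ are bounded operators between the relevant Hilbert spaces, $(T^{(t)}_h)^*=T^{(t)}_{\overline h}$ is standard, and the identity \eqref{T-H-relation1} and the intertwining relations \eqref{transform}, \eqref{u-operator-z} were already established; so the argument goes through without difficulty.
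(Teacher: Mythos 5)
Your argument is correct and is exactly the combination of Theorem~\ref{quantizaton-2} and Proposition~\ref{bmo-bda} that the paper leaves implicit: the adjoint identity $\big(T^{(t)}_g T^{(t)}_f - T^{(t)}_{fg}\big)^* = T^{(t)}_{\overline f}T^{(t)}_{\overline g} - T^{(t)}_{\overline f\,\overline g}$ together with $\|A\|=\|A^*\|$ reduces the second limit to the first with $\overline f$ in place of $f$, giving $f\in\mathrm{VDA}_*\cap\overline{\mathrm{VDA}_*}=\mathrm{VMO}$. You have also (rightly) read the characterizing condition as being on $f$ rather than on $g$; the ``$g\in\mathrm{VMO}$'' in the corollary's statement is evidently a typo, since $g$ is universally quantified.
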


\section{Further remarks}\label{Further remarks}

For $1\le p, q<\infty$, we have characterized those $f\in \mathcal S$ for which $H_f: F^p(\varphi) \to L^q(\varphi)$ is bounded (or compact). For small exponents $0<p<q<1$, we have proved that this characterization remains true for compactness when $f\in L^\infty$. We also note that when $p\le q$ and $q\ge 1$, boundedness and compactness of Hankel operators $H_f : F^p(\varphi) \to L^p(\varphi)$ depend on $q$ (see Remark~\ref{proper-inclusions} and Theorem~\ref{main1}) while for $p>q$ we cannot say the same---we note that we have no statement analogous to Remark~\ref{proper-inclusions} for $\IDA^{s,q}$.

Moreover, for harmonic symbols $f\in \mathcal S$ and  $0<p, q<\infty$,  using the  Hardy-Littlewood theorem on the sub-mean value (see Lemma~2.1 of~\cite{HPZ07}, for example), we are able to characterize boundedness of $H_f : F^p(\varphi) \to L^q(\varphi)$ with the space $\mathrm{IDA}^{s, q}$. We will return to this topic in a future publication.

We also note that the space $F^\infty(\varphi)$ does not appear in our results because $\Gamma$ is not dense in it. Instead, it may be possible to consider the space
$$
	f^\infty(\varphi) = \{ f\in F^\infty(\varphi) : fe^{-\varphi}\in C_0(\C^n)\},
$$
which can be viewed as the closure of $\Gamma$ in $F^\infty(\varphi)$, and extend our results to this setting.

Regarding weights, the Fock spaces studied in this paper are defined with weights $\varphi\in C(\Cn)$ satisfying $\textrm {Hess}_{\mathbb R}\varphi \simeq {\mathrm E}$. As stated in Section~\ref{weighted Fock spaces}, these weights are contained in the class considered in \cite{SV12}. Now, we note that for the weights $\varphi$ in~\cite{SV12}, ${\mathrm i}\partial \overline \partial \varphi \simeq \omega_0$,  and from H\"{o}rmander's theorem on the canonical solution to $\overline \partial$-equation it follows that
 $$
    \|H_f g\|_{2, \varphi}^2 \le  \int_{\Cn} | g \overline \partial f |^2_{{\mathrm i}\partial \overline \partial} e^{-2\varphi} dv \le C\left \|g \, |\overline \partial f|\right\|_{2, \varphi}^2,
 $$
and hence we know that the conclusions of Theorem~\ref{main1} remain true when $q=2$ (see Theorem~4.3 of~\cite{HV22}). Upon these observations, we raise the following conjecture.

 \begin{conjecture} \label{conjecture}
 Suppose $\varphi\in C^2(\Cn)$ satisfying ${\mathrm i}\partial \overline \partial \varphi \simeq \omega_0$.  Then for $f\in \mathcal S$ and  $0< p, q<\infty$, $H_f\in \mathcal B(F^p(\varphi), L^q(\varphi))$ if and only if $f\in \mathrm{IDA}^{s, q}$ where $s=\fr {pq}{p-q}$ if $p>q$ and $s=\infty$ if $p\le q$.
 \end{conjecture}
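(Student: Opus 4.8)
The plan is to preserve the architecture of the proof of Theorem~\ref{main1} and to localize the single point at which the stronger hypothesis $\mathrm{Hess}_{\mathbb R}\varphi\simeq\mathrm E$ was genuinely used. First I would collect the ingredients already available for weights with $\mathrm i\partial\overline\partial\varphi\simeq\omega_0$: the off-diagonal decay $|K(z,w)|\lesssim e^{\varphi(z)+\varphi(w)}e^{-\theta|z-w|}$ of Delin~\cite{De98} together with the near-diagonal lower bound of~\cite{SV12}; the boundedness of $P$ on $L^q(\varphi)$ for $1\le q\le\infty$ and $P|_{F^q(\varphi)}=\mathrm I$; the sub-mean value inequality; the inclusions $F^p(\varphi)\subset F^q(\varphi)$ for $p\le q$; and the $(p,q)$-Fock--Carleson measure theory of~\cite{HL14}. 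I would also record that the decomposition $f=f_1+f_2$ of Theorem~\ref{BDA-equivalence} and Corollary~\ref{proj-decomp} is weight-independent, since it uses only Euclidean balls, their Bergman projections, and Henkin solutions on them.

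Granting this, the necessity direction should carry over essentially verbatim. Testing $H_f$ on normalized reproducing kernels and invoking only the near-diagonal lower bound yields $\|H_f(k_z)\|_{q,\varphi}\gtrsim G_{q,r_0}(f)(z)$, so $f\in\BDA^q$ (with $f\in\VDA^q$ in the compact case) when $p\le q$; the Rademacher/Khintchine argument behind Theorem~\ref{main1}(b), which needs only $\|\sum_j\lambda_j k_{a_j}\|_{p,\varphi}\lesssim\|\{\lambda_j\}\|_{\ell^p}$ from~\cite{HL14}, handles $q<p$; and for $0<q<1$ the passage from the fixed radius $r_0$ to all radii is obtained exactly as in the proof of Theorem~\ref{main1}(c), by transferring to the standard-weight space $F^p_\alpha$, where~\eqref{classical} is available, and invoking the \emph{already known} sufficiency for standard weights.

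The real content is sufficiency. After splitting $f=f_1+f_2$, the $f_2$-term is harmless: $H_{f_2}g=f_2g-P(f_2g)$ is controlled by the Fock--Carleson estimate for $|f_2|^q\,dv$ and the $L^q$-boundedness of $P$, so everything reduces to the $f_1$-term. For this term $H_{f_1}g=(\mathrm I-P)u$ for \emph{any} solution $u$ of $\overline\partial u=g\,\overline\partial f_1$; since $P$ is bounded on $L^q(\varphi)$, it suffices to exhibit a solution with $\|u\|_{q,\varphi}\le C\,\bigl\|\,g\,|\overline\partial f_1|\,\bigr\|_{q,\varphi}$ for $g\in H(\Cn)$. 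For $q=2$ this is H\"ormander's estimate, as recorded in the remark preceding the conjecture.

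For $q\ne2$, however, the operator $A_\varphi$ of Lemma~\ref{hankel-and-d-bar} is no longer at our disposal, because the proof of its $L^q$-boundedness rested on the pointwise bound $\varphi(z)-\varphi(\xi)\ge 2\,\mathrm{Re}\langle\partial\varphi(\xi),z-\xi\rangle+m|z-\xi|^2$, and this fails once only $\mathrm i\partial\overline\partial\varphi\simeq\omega_0$ is assumed: then the holomorphic Hessian $(\partial^2\varphi/\partial z_j\partial z_k)$ is unbounded and the second-order Taylor remainder of $\varphi$ is no longer comparable to $|z-\xi|^2$ in the real directions. \textbf{This is the main obstacle} --- it amounts to an $L^q$-theory ($q\ne2$) for the weighted $\overline\partial$-equation under $\mathrm i\partial\overline\partial\varphi\simeq\omega_0$, which is open in its own right. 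The route I would attempt is to upgrade Delin's weighted $L^2$-localization argument --- the very device that yields the exponential decay of $K$ despite the unbounded holomorphic Hessian --- into an $L^q$-solution operator, for instance by solving $\overline\partial$ on balls against a varying auxiliary plurisubharmonic weight and patching via a partition of unity and the covering bound~\eqref{lattice}, or by interpolating H\"ormander's $L^2$-estimate against an $L^q$-bound extracted from Delin's kernel. The endpoint $q=1$, and --- for $0<q<1$ --- the need for \emph{pointwise} control of a substitute solution operator as in the proof of Theorem~\ref{main1}(c), are where I expect the difficulty to concentrate, and they may force the additional hypothesis $f\in L^\infty$ that already appears there.
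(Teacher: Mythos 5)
You have not produced a proof, and indeed the statement you are addressing is stated in the paper only as Conjecture~\ref{conjecture}: the authors offer no proof, and the remark immediately preceding it records exactly what is known, namely that H\"ormander's $L^2$-estimate for the canonical solution of $\overline\partial u = g\,\overline\partial f$ settles the case $q=2$ under the weaker hypothesis $\mathrm i\partial\overline\partial\varphi\simeq\omega_0$. Your reduction is, however, an accurate diagnosis of why the problem is open. The necessity direction does survive the weakening of the hypothesis, since the kernel estimates \eqref{basic-est-a}--\eqref{basic-est-b} (Christ, Delin, and \cite{SV12}), the boundedness of $P$ on $L^q(\varphi)$, and the $(p,q)$-Fock--Carleson theory of \cite{HL14} are all established in that generality, and the decomposition machinery of Section~\ref{space-IDA} is purely Euclidean and weight-free. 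You have also correctly located the single place where the full real-Hessian hypothesis enters the sufficiency argument: the proof of Lemma~\ref{hankel-and-d-bar}(A) bounds $|A_\varphi(\omega)(z)e^{-\varphi(z)}|$ by Taylor-expanding $\varphi$ and using $\varphi(z)-\varphi(\xi)\ge 2\,\mathrm{Re}\langle\partial\varphi(\xi),z-\xi\rangle+m|z-\xi|^2$, which requires a lower bound on the \emph{pure} holomorphic second derivatives $\partial^2\varphi/\partial z_j\partial z_k$ that $\mathrm i\partial\overline\partial\varphi\simeq\omega_0$ alone does not supply.

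The genuine gap is therefore the one you yourself flag: for $q\ne 2$ you have no solution operator for $\overline\partial u=g\,\overline\partial f_1$ with $\|u\|_{q,\varphi}\le C\|\,g\,|\overline\partial f_1|\,\|_{q,\varphi}$ under the hypothesis $\mathrm i\partial\overline\partial\varphi\simeq\omega_0$, and without it the sufficiency half of the conjecture (and, for $0<q<1$, even the pointwise estimates \eqref{f-1-estimath}--\eqref{f-2-estimath} needed on the compactness side) cannot be run. Your suggested remedies --- upgrading Delin's weighted $L^2$ localization to an $L^q$ solution operator, patching local solutions over a lattice, or interpolating against H\"ormander's estimate --- are plausible research directions but are not carried out, and each faces real difficulties (interpolation of solution operators for $\overline\partial$ is delicate because the canonical solution is not a fixed linear operator across exponents, and patching local solutions produces a new $\overline\partial$-closed error term that must itself be solved globally). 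So the proposal should be read as a correct reduction of the conjecture to an open problem in the $L^q$-theory of the weighted $\overline\partial$-equation, not as a proof; in that respect it is consistent with, and no stronger than, the paper's own assessment.
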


In the literature, there are a number of interesting results on the simultaneous boundedness (and compactness) of Hankel operators $H_f$ and $H_{\overline f}$. These types of characterizations often involve the function spaces $\textrm{BMO}^q $ and $ \textrm{IMO}^{s,q} $ in their conditions; see, e.g., \cite{HW18, Zh12} and the references therein.
For   $1\le q<\infty$ and $1\le s\le \infty$,  set $\overline { \textrm{IDA}^{s,q}}= \{\overline f: f\in \textrm{IDA}^{s,q}\}$. Then Proposition~2.5 of~\cite{HW18} shows that $\textrm{IDA}^{s,q}\cap\overline { \textrm{IDA}^{s,q}} = \textrm{IMO}^{s, q}$ and the results of Section~\ref{hankel-A} provide a description of the simultaneous boundedness (or compactness) of $H_f$ and $  H_{\overline f}$ as seen in the following theorem, where as before, we set $s=\fr {pq}{p-q}$ if $p>q$ and $s=\infty$ if $p\le q$.

\begin{theorem}\label{doubling-bdd}
Let $\varphi\in C^2(\Cn)$ be  real valued, ${\mathrm {Hess}}_{\mathbb R}\varphi \simeq {\mathrm E}$, and let $f\in \mathcal S$.  For  $1\le  p, q<\infty$, Hankel operators  $H_f$ and $  H_{\overline f}$ are  simultaneously bounded from $F^p(\varphi)$ to $ L^q(\varphi))$ if and only if $f\in \mathrm{IMO}^{s, q}$.
\end{theorem}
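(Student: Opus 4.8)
The plan is to deduce Theorem~\ref{doubling-bdd} directly from Theorem~\ref{main1} together with the identity $\IDA^{s,q}\cap\overline{\IDA^{s,q}}=\mathrm{IMO}^{s,q}$ recorded above (Proposition~2.5 of~\cite{HW18}). The first point I would note is that the symbol class $\mathcal S$ is stable under complex conjugation: since $|\overline f\,g|=|f\,g|$ pointwise for every $g\in\Gamma$, we have $f\in\mathcal S$ if and only if $\overline f\in\mathcal S$. Hence $H_{\overline f}$ is densely defined on $F^p(\varphi)$ exactly when $H_f$ is, and Theorem~\ref{main1} is applicable to $\overline f$ as well as to $f$.

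Next I would merge the two exponent ranges into a single statement. For $1\le p\le q<\infty$, part~(a) of Theorem~\ref{main1} gives that $H_g\in\mathcal B(F^p(\varphi),L^q(\varphi))$ if and only if $g\in\BDA^q=\IDA^{\infty,q}$, while for $1\le q<p<\infty$, part~(b) gives $H_g\in\mathcal B(F^p(\varphi),L^q(\varphi))$ if and only if $g\in\IDA^{s,q}$ with $s=\fr{pq}{p-q}$. Under the convention $s=\infty$ when $p\le q$, both read: for $g\in\mathcal S$ and $1\le p,q<\infty$, the Hankel operator $H_g$ is bounded from $F^p(\varphi)$ to $L^q(\varphi)$ if and only if $g\in\IDA^{s,q}$. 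Applying this with $g=f$ and, by the first step, with $g=\overline f$, we conclude that $H_f$ and $H_{\overline f}$ are simultaneously bounded if and only if $f\in\IDA^{s,q}$ and $\overline f\in\IDA^{s,q}$, that is, $f\in\IDA^{s,q}\cap\overline{\IDA^{s,q}}=\mathrm{IMO}^{s,q}$, which is the claim.

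Since all the ingredients are already in place, there is no genuine obstacle: the theorem is a corollary of Theorem~\ref{main1}. The only points deserving a line of care are the conjugation-invariance of $\mathcal S$ and the bookkeeping that unifies the cases $p\le q$ and $p>q$ under the stated convention on $s$ — in particular, when $p>q$ one has $s=\fr{pq}{p-q}>q\ge 1$, so the identity $\IDA^{s,q}\cap\overline{\IDA^{s,q}}=\mathrm{IMO}^{s,q}$ is being invoked in the admissible range $1\le s\le\infty$. A quantitative refinement, namely that $\|H_f\|_{F^p(\varphi)\to L^q(\varphi)}+\|H_{\overline f}\|_{F^p(\varphi)\to L^q(\varphi)}$ is comparable to $\|f\|_{\IDA^{s,q}}+\|\overline f\|_{\IDA^{s,q}}$, follows at once from the norm equivalences \eqref{bounded-g} and \eqref{q-lessthen-p-a} of Theorem~\ref{main1}.
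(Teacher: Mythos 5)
Your proposal is correct and follows the paper's own route exactly: the paper likewise derives Theorem~\ref{doubling-bdd} by applying Theorem~\ref{main1} (parts (a) and (b), unified under the convention $s=\infty$ for $p\le q$) to both $f$ and $\overline f$ and then invoking the identity $\IDA^{s,q}\cap\overline{\IDA^{s,q}}=\mathrm{IMO}^{s,q}$ from Proposition~2.5 of~\cite{HW18}. Your added remarks on the conjugation-invariance of $\mathcal S$ and on $s=\fr{pq}{p-q}>q\ge 1$ lying in the admissible range are correct and only make explicit what the paper leaves implicit.
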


We state one more conjecture related to Theorem~\ref{main2}, in which we proved that for $f\in L^\infty$ and $0<p< \infty$, $H_f$ is compact on $F^p(\varphi)$ if and only if $H_{\overline  f}$ in compact on $F^p(\varphi)$. Recall that this phenomenon does not occur for Hankel operators on the Bergman space or on the Hardy space. As predicted by Zhu~\cite{Zh12}, and verified for Hankel operators on the weighted Fock spaces $F^p(\alpha)$ with $1<p<\infty$ in~\cite{HV19}, a partial explanation for this difference is the lack of bounded holomorphic or harmonic functions on the entire complex plane.  From this point of view it is natural to suggest that a similar result should remain true for Hankel operators mapping from $F^p(\varphi)$ to $L^q(\varphi)$.

\begin{conjecture}\label{conjecture-2}
Suppose that $\varphi\in C^2(\C^n)$ satisfies ${\mathrm i}\partial \overline \partial \varphi \simeq \omega_0$ and $0<p, q<\infty$. Then for $f\in L^\infty$,  $H_{ {f} }\in \mathcal K \left(F^p(\varphi), L^q(\varphi)\right)$ if and only if $H_{\overline {f}}\in \mathcal K\left(F^p(\varphi), L^q(\varphi)\right)$.
\end{conjecture}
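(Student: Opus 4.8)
The plan is to follow the scheme of the proof of Theorem~\ref{main2}, tracking which ingredients survive when the hypothesis $\mathrm{Hess}_{\mathbb R}\varphi\simeq\mathrm E$ is relaxed to $\mathrm i\partial\overline\partial\varphi\simeq\omega_0$. Since $f$ and $\overline f$ play symmetric roles, it suffices to prove that $H_f\in\mathcal K(F^p(\varphi),L^q(\varphi))$ implies $H_{\overline f}\in\mathcal K(F^p(\varphi),L^q(\varphi))$, and I would split into the same two cases as in Theorem~\ref{main2}. \emph{Case $0<p\le q<\infty$.} First extract, from the compactness of $H_f$, the decay $\lim_{z\to\infty}G_{q,r}(f)(z)=0$ for every $r>0$: testing on the normalized reproducing kernels $k_z$ and using only the lower kernel bound \eqref{basic-est-b} (which holds under $\mathrm i\partial\overline\partial\varphi\simeq\omega_0$) gives $G_{q,r_0}(f)(z)\le C\|H_f(k_z)\|_{q,\varphi}\to0$ exactly as in \eqref{hankel-on-k}; to reach all $r$, argue as at the end of the proof of Theorem~\ref{main1}(c)---since $f\in L^\infty$, the vanishing of $G_{q,r_0}(f)$ at infinity makes $H_f$ compact on the standard-weight space $F^p_\alpha$ by the sufficiency part of Theorem~\ref{main1} (valid for $p\le q$), and on $F^p_\alpha$ the kernel identity \eqref{classical} gives $\inf_{w\in B(z,r)}|K(w,z)e^{-\frac\alpha2|z|^2-\frac\alpha2|w|^2}|\ge C(r)>0$ for each $r$, so a second kernel test yields $G_{q,r}(f)(z)\to0$. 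Next fix $R>0$, pick an $R/2$-lattice $\{a_j\}$, a partition of unity $\{\psi_{j,R}\}$ with $\|\nabla\psi_{j,R}\|_\infty\le C/R$, and $h_{j,R}\in H(B(a_j,2R))$ with $M_{q,2R}(f-h_{j,R})(a_j)=G_{q,2R}(f)(a_j)$ and $\sup_{B(a_j,R)}|h_{j,R}|\le C\|f\|_\infty$ (Lemma~\ref{G-function}); set $f_{1,R}=\sum_j\psi_{j,R}h_{j,R}$ and $f_{2,R}=f-f_{1,R}$. The real-variable estimates behind Lemma~\ref{basic-docomp} give $M_{q,R}(\overline{f_{2,R}})=M_{q,R}(f_{2,R})\le C\,G_{q,2R}(f)\to0$ at infinity, so $|\overline{f_{2,R}}|^q\,dv$ is a vanishing $(p,q)$-Fock Carleson measure; arguing as in the compactness parts of the proofs of Theorem~\ref{main1}(a) and (c)---the bound on $\|P(\overline{f_{2,R}}g)\|_{q,\varphi}$ via Lemma~\ref{integral-est} handles $q<1$ too---one gets $H_{\overline{f_{2,R}}}\in\mathcal K(F^p(\varphi),L^q(\varphi))$ for each $R$ (Proposition~\ref{F-C-cpt}). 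Since the $h_{j,R}$ are holomorphic and bounded by $C\|f\|_\infty$ on $B(a_j,2R)$, interior estimates give $\|\partial h_{j,R}\|_{L^\infty(B(a_j,3R/2))}\le C\|f\|_\infty/R$, and combining this with $\|\overline\partial\psi_{j,R}\|_\infty\le C/R$, $\sum_j\overline\partial\psi_{j,R}=0$ and $|h_{j,R}-h_{k,R}|\le C\,G_{q,2R}(f)$ on overlaps yields $\|\overline\partial\overline{f_{1,R}}\|_\infty=\|\overline{\partial f_{1,R}}\|_\infty\le C\|f\|_\infty/R$. It would then remain to conclude $\|H_{\overline{f_{1,R}}}\|_{F^p(\varphi)\to L^q(\varphi)}\le C\|f\|_\infty/R$, for then $H_{\overline f}=\lim_{R\to\infty}H_{\overline{f_{2,R}}}$ in operator norm is compact.

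\emph{Case $0<q<p<\infty$.} Here I would not use kernel-testing but, as in Theorem~\ref{main2} for $1\le q<p$, pass through the symmetry of $\mathrm{IDA}^{s,q}$ with $s=\frac{pq}{p-q}$: a suitably general $\mathrm{IDA}$-characterization of compactness---the $q<p$ analog of Theorem~\ref{main1}(b), extended both to this weight class and, when $q<1$, to small exponents---gives $f\in\mathrm{IDA}^{s,q}$; one then writes $f=f_1+f_2$ with $f_1\in C^2(\C^n)$, $\|f_1\|_\infty\le C\|f\|_\infty$ (cf.\ Theorem~\ref{BDA-equivalence} and \eqref{f-j-bound-above}) and $M_{q,1}(f_2),\,\overline\partial f_1\in L^s$, uses Lemma~\ref{partial-derivatives} to bound $\|\overline\partial\overline{f_1}\|_{L^s}$ by $\|\overline\partial f_1\|_{L^s}$, so $\overline f=\overline{f_1}+\overline{f_2}\in\mathrm{IDA}^{s,q}$, and concludes $H_{\overline f}$ compact.

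The main obstacle, in both cases, is the same: converting pointwise (or $L^s$) control of $\overline\partial\overline{f_1}$ into a bound on $\|H_{\overline{f_1}}\|_{F^p(\varphi)\to L^q(\varphi)}$, i.e.\ having an $L^q$-bounded solution operator for $\overline\partial u=g\,\overline\partial\overline{f_1}$ with respect to the weight $e^{-q\varphi}$. Under $\mathrm{Hess}_{\mathbb R}\varphi\simeq\mathrm E$ this is exactly what the Berndtsson--Andersson operator $A_\varphi$ supplies (Lemma~\ref{hankel-and-d-bar}, Corollary~\ref{d-bar-hankel}, estimates \eqref{f_1-estimate} and \eqref{f-1-estimath}), and it underpins the entire apparatus of Sections~\ref{prelimi}--\ref{hankel-A} used above, including the $\mathrm{IDA}$-characterization of compactness needed in the case $q<p$; under only $\mathrm i\partial\overline\partial\varphi\simeq\omega_0$ such an operator is presently available solely for $q=2$, via H\"ormander's theorem (cf.\ the discussion preceding Conjecture~\ref{conjecture}). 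Thus this strategy, with current tools, settles the conjecture in the Hilbert-space case $q=2$ for all $0<p<\infty$, and for $q\ne2$ reduces it to Conjecture~\ref{conjecture}---that is, to proving the $L^q$ $\overline\partial$-estimates for the weight class $\mathrm i\partial\overline\partial\varphi\simeq\omega_0$. A further, range-dependent, point is that in the case $q<p$ the Ahlfors--Beurling argument of Lemma~\ref{partial-derivatives} requires $s=\frac{pq}{p-q}>1$, so for $q$ small relative to $p$ one would need an additional device to pass from $f_1$ to $\overline{f_1}$.
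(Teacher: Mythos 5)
You should be aware that the statement you were asked to prove is not a theorem of the paper but one of its open problems: Conjecture~\ref{conjecture-2} is stated in Section~\ref{Further remarks} without proof, so there is no proof in the paper to compare against. Read in that light, your proposal is essentially an honest reduction rather than a proof, and as such it is accurate: it correctly identifies that every step of the proof of Theorem~\ref{main2} except one survives the weakening of $\mathrm{Hess}_{\mathbb R}\varphi\simeq\mathrm E$ to $\mathrm i\partial\overline\partial\varphi\simeq\omega_0$ (the kernel bounds of Lemma~\ref{basic-est} and the lattice/partition constructions are available for the wider class via \cite{Ch91,De98,SV12}), and that the single missing ingredient is an $L^q(\varphi)$-bounded solution operator for $\overline\partial u=g\,\overline\partial f_1$, which the Berndtsson--Andersson operator $A_\varphi$ supplies only under the stronger Hessian hypothesis. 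This is exactly the authors' own assessment: the discussion preceding Conjecture~\ref{conjecture} notes that under $\mathrm i\partial\overline\partial\varphi\simeq\omega_0$ only the H\"ormander $L^2$-estimate is available, so the machinery of Theorem~\ref{main1} (and hence of Theorem~\ref{main2}) is currently known only for $q=2$. Your claim that the strategy settles the case $q=2$ for all $0<p<\infty$ is consistent with the paper's remark (citing Theorem~4.3 of \cite{HV22}), though to make it airtight you would still need to verify that the auxiliary results you invoke --- the necessity argument via $k_z$ and \eqref{basic-est-b}, the $(p,q)$-Fock Carleson theory of \cite{HL14}, the $r$-independence of the vanishing condition, and for $2=q<p$ the full $\mathrm{IDA}^{s,2}$ compactness characterization --- are all stated in the literature at the level of generality $\mathrm i\partial\overline\partial\varphi\simeq\omega_0$ rather than only under \eqref{weights}; and your observation that for $q<p$ with $s=\frac{pq}{p-q}\le 1$ the Ahlfors--Beurling step of Lemma~\ref{partial-derivatives} breaks down is a genuine additional obstruction beyond Conjecture~\ref{conjecture}. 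In short: no error, but what you have is a conditional result (unconditional only for $q=2$) plus a reduction to the paper's Conjecture~\ref{conjecture}, not a resolution of Conjecture~\ref{conjecture-2}, and the paper itself offers none.
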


Notice that $\IDA^{s,q}\cap L^\infty$ is a Banach algebra under the norm $\|\cdot\|_{\IDA^{s,q}} + \|\cdot\|_\infty$. We can also express Conjecture~\ref{conjecture-2} in algebraic terms, that is, we conjecture that $\IDA^{s, q}\cap L^\infty$ on $\Cn$ is closed under the conjugate operation $f \mapsto \overline f$, where $1<s\le \infty$ and $0<q<\infty$.

\bigskip
Related to our work on quantization and Theorem~\ref{quantizaton-2} in particular, we conclude this section with the following problem: Characterize those $f\in L^\infty$ for which it holds that
$$
	\lim_{t\to 0} \|T^{(t)}_fT^{(t)}_g - T^{(t)}_{fg}\|_{S_2} = 0
$$
for all $g\in L^\infty$, where $\|\cdot\|_{S_2}$ stands for the Hilbert-Schmidt norm. It would also be important to consider this question for other Schatten classes $S_p$.

\section*{Acknowledgments}

\noindent The authors thank the referee for valuable comments and suggestions.

\smallskip

Z.~Hu was partially supported by the National Natural Science Foundation of China (12071130, 12171150). J.~Virtanen was supported in part by Engineering and Physical Sciences Research Council (EPSRC) grant EP/T008636/1.\\

\end{document}